\documentclass[12pt]{article}

\usepackage{estilo}

\begin{document}

	\begin{center}
		\rule{15cm}{1.5pt} \vspace{.6cm}
		
		{\Large \bf Comparison methods for semilinear elliptic problems\\[3mm] on Riemannian manifolds with a Ricci lower bound}
		
		\vspace{0.4cm}
		
		\author[Jos\'{e} M. Espinar$\mbox{}^1$, Fern\'an Gonz\'alez-Ib\'a\~{n}ez$\mbox{}^1$ and Diego A. Mar\'{i}n$\mbox{}^1$
		
		\vspace{0.3cm} \rule{15cm}{1.5pt}
	\end{center}
	
	\vspace{.3cm}

	\noindent $\mbox{}^1$ Department of Geometry and Topology and Institute of Mathematics (IMAG), University of Granada, 18071, Granada, Spain;\vspace{0.2cm}
	e-mail:\{jespinar, fernangi, damarin\}@ ugr. es
	
	\vspace{.3cm}

\begin{abstract}
In the first part of the article we develop a comparison method for positive solutions of the semilinear Dirichlet problem $\Delta u+f(u)=0$ on domains $\Omega\subset \mathcal M^n$ of a Riemannian manifold $(\mathcal{M}^n,g)$ with a Ricci lower bound $\operatorname{Ric}_g\ge (n-1)k\,g$. 
Assuming admissibility and structural conditions on $f$, we prove a sharp pointwise gradient comparison, with a rigid characterization of the equality case. 
As applications, we derive an explicit isoperimetric-type inequality and a quantitative hot-spot localization estimate under natural convexity assumptions.
In the second part, on $\mathbb S^n$ we show that isoparametric foliations produce non-rotational $f$-extremal domains, and that these examples descend to smooth quotients under free isometric actions preserving the foliation.
\end{abstract}

{\bf Key Words:} Semilinear elliptic problem, gradient comparison, Ricci lower bound, hot spots, isoparametric foliations, overdetermined elliptic problems.

{\bf MSC2020:}  35N25, 35J61,53C21, 58J05.

\section{Introduction}\label{sect_Introduction}
In elliptic PDE one often encounters a tight interplay between the geometry of a domain $\Omega$ and the analytic features encoded by a solution $u$. 
Global scales such as the inradius and the volume, together with local boundary information (for instance, bounds on principal curvatures or on the mean curvature of $\partial\Omega$), typically constrain the maximum value of $u$, the shape and location of its maximum set, and the behaviour of $|\nabla u|$ both on $\partial\Omega$ and along level sets.
A paradigmatic manifestation of this rigidity is Serrin’s overdetermined torsion problem on $\mathbb{R}^n$ ($\Delta u=-1$ in $\Omega$, $u=0$ on $\partial\Omega$): if $\Omega$ is bounded, $\partial\Omega$ is sufficiently regular, and $|\nabla u|$ is constant on $\partial\Omega$, then $\Omega$ must be a ball; see \cite{Se,We}.
In a different direction, for convex annular domains one can obtain lower bounds for the smallest principal curvature of level sets of harmonic solutions and of solutions to certain semilinear equations, in terms of boundary geometry and the boundary size of $|\nabla u|$ \cite{alicechangPrincipalCurvatureEstimates2010}.
Also, the location of the maximum set can be quantified in terms of intrinsic scales of the domain: for a solution to the torsion problem under assumptions such as mean convexity of $\partial\Omega$, one derives estimates forcing maximum points to stay a controlled distance away from the boundary, with the inradius as the relevant scale \cite{MP}.

Building on these ideas, Agostiniani, Borghini, and Mazzieri \cite{ABM}, among other results, introduced a comparison method that associates to a torsion solution in a planar annular domain a suitably chosen radial model solution, selected from an explicit one-parameter family.
This comparison yields \textit{a priori} control of $|\nabla u|$ and, in turn, a qualitative description of the solution, including bounds for the location of the maximum set and curvature estimates for suitable level sets.

The same philosophy was subsequently developed in Euclidean space in arbitrary dimension in \cite{ABBM}, and extended to the spherical setting in dimension $2$ for more general problems in \cite{EMa,EMa2}, with further applications, including contributions to the study of the critical catenoid conjecture.
Related comparison principles also arise in the analysis of static solutions of the Einstein equations with positive cosmological constant; see \cite{Chr,borghiniMassStaticMetrics2018}.

In this paper we extend the comparison scheme of \cite{EMa,EMa2} to Riemannian manifolds $(\mathcal M^n,g)$ under a Ricci lower  curvature bound,
\begin{equation}\label{eq:Ric}
\operatorname{Ric}_g \ge (n-1)k\,g .
\end{equation}

This direction was proposed in \cite[Section 1.5]{ABM}. Our approach combines comparison geometry with a two--parameter family of one-dimensional model profiles obtained by imposing radial symmetry in a suitable model space. The analytic starting point is the semilinear Dirichlet problem for the Laplace--Beltrami operator.

Let $\Omega\subset \mathcal M$ be a domain with $\mathcal{C}^2$ boundary and let $f\in \mathrm{Lip}_{\mathrm{loc}}(\mathbb{R})$. We consider
\begin{equation}\label{DP}
\left\{
\begin{array}{rlll}
\Delta u + f(u) = 0 \quad &\text{in } \Omega,\\
u>0 \quad &\text{in } \Omega,\\
u=0 \quad &\text{on } \partial\Omega,
\end{array}
\right.
\end{equation}
where $\Delta$ denotes the Laplace--Beltrami operator with respect to $g$. This formulation includes, as special cases, Lane--Emden--Fowler equations, Allen--Cahn-type nonlinearities, and affine Helmholtz problems.

The comparison scheme hinges on the availability of radial model solutions (on disks or annuli) that are positive in the interior, vanish on the boundary, and are strictly monotone in the radial variable away from the maximum set; since these qualitative features are not automatic for a general locally Lipschitz $f$, we encode them through the notion of $k$--admissibility (Definition~\ref{def:admissible-f}). The models themselves are built in a boundary-adapted way: we introduce a warped product metric on an interval times a connected component of $\partial\Omega$ endowed with the induced metric, and we restrict to fiber-invariant functions so that \eqref{DP} reduces to an ODE. Varying the maximum value and the radial location where it is attained produces a two-parameter family of explicit profiles, which generate model solutions on canonical radial domains (of disk- and annulus-type). To compare a given pair $(\mathcal{U},u)$, where $\mathcal{U}\in \pi_0(\Omega\setminus \operatorname{Max}(u))$ is a connected component of the complement of the maximum set $\operatorname{Max}(u):=\{\,p\in\Omega:\ u(p)=\max_{\Omega}u\,\}$, we introduce a boundary parameter extracted from extremal values of $|\nabla u|^2$ on $\partial\Omega$, in the spirit of \cite{EMa2, ABM, ABBM,AFM}. This parameter selects an appropriate comparison pair (not necessarily uniquely, unlike in \cite{ABM, ABBM}); once such a pair is fixed, the argument proceeds as in \cite{ABM} and yields the desired estimates.

The first main result is a sharp pointwise gradient comparison (Theorem~\ref{theo:Gradient}).
Roughly speaking, each region between the maximum set of $u$ and the boundary can be compared with an explicit one-dimensional profile. Under the structural assumptions of Theorem~\ref{theo:Gradient}, this yields a pointwise bound for $|\nabla u|$ in terms of the corresponding model gradient.
Moreover, the equality case is rigid: if the estimate is sharp at one point, then the geometry of the region and the solution itself are isometric to the model.

A second consequence is an integral isoperimetric-type estimate (Theorem~\ref{theo:Isoperimetric}).
Assuming a mild regularity hypothesis on the $(n-1)$-dimensional part $\Gamma_M$ of $\mathrm{Max}(u)$, the pointwise gradient comparison can be combined with a level-set area bound and the coarea formula to obtain an explicit lower bound for the ratio $\mathcal H^n(\mathcal U)/\mathcal H^{n-1}(\Gamma_M)$ in terms of the corresponding model profile.
Moreover, equality forces $(\mathcal U,u)$ to be isometric to the comparison model pair.

The third main result provides a quantitative localization of hot spots in the spirit of \cite{MP} (Theorem~\ref{theo:HotSpots}), extending \cite[Theorem~5.2]{ABM} to the present semilinear setting.
Besides the assumptions of Theorem~\ref{theo:Gradient}, one imposes natural pinching  condition on $f$ and convexity-type hypotheses on $\Omega$.
Under these assumptions, we obtain an explicit lower bound for $\mathrm{dist}(\mathrm{Max}(u),\partial\Omega)$ in terms of the inradius and the corresponding model parameters.
The equality case is fully rigid: it forces the ambient manifold to be a space form of sectional curvature $k$, denoted as $\mathbb{M}^n(k)$, the nonlinearity to be affine, namely $f(x)=nkx+1$, and $(\Omega,u)$ to coincide, up to congruence, with the Serrin-type ball solution (recovering Serrin’s torsion equation when $k=0$).

The remainder of the paper is devoted to the study of overdetermined elliptic problems (OEP) on the sphere $(\mathbb{S}^n,g_{\mathbb{S}^n})$ and to the construction of $f$-extremal domains, in the sense of \cite{Ros}. Model solutions exhibit a radial structure, since they are obtained from the ordinary differential equation associated with the problem. As a consequence, the norm of the gradient is constant along regular level sets and, in particular, the normal derivative is constant on each boundary component of the model domain. Therefore, these solutions provide natural examples of \(f\)-extremal domains.

The sphere is, moreover, a fundamental example of a model manifold, and in this setting the picture is especially rich, see for example \cite{ruizOverdeterminedEllipticProblems2023a,FMW}. The existence of many isoparametric foliations in  spheres provides a natural geometric mechanism for constructing \(f\)-extremal domains beyond the radial setting. In particular, this approach yields non-rotationally symmetric examples, such as those constructed by Shklover \cite{Shk} and Savo \cite{Savo}, and can be extended to nonlinearities satisfying suitable conditions analogous to \(k\)--admissibility; see Definition \ref{def:admissibleIsoparametric-f}. Moreover, we prove that, for every closed subgroup of the isometry group of the sphere that preserves the foliation and acts freely on the relevant leaves, these examples descend to smooth quotients.

\paragraph{Organisation of the paper.} 
In Section~\ref{sec:ComMethRes} we construct the comparison framework and state the main results.
Section~\ref{sectionGradient} contains the proofs of these comparison results.
Section~\ref{sec:ExoticSol} discusses the construction of non-rotationally symmetric solutions on the sphere and their descent to quotients.
Appendix~\ref{appendixSolutions} collects the ODE statements used in Sections~\ref{sec:ComMethRes} and~\ref{sec:ExoticSol}, and Appendix~\ref{appendixGradient} proves an elliptic inequality needed for the gradient estimates.

\section{Comparison method and its results}\label{sec:ComMethRes}

In this section, we set up the comparison framework, extending the strategy of \cite{EMa,EMa2} to solutions of \eqref{DP} under the Ricci lower bound \eqref{eq:Ric}. 
We work with boundary-adapted model manifolds realized as warped products and with the associated model solutions by reducing \eqref{DP} to a radial ODE for fiber-invariant functions and then lifting the resulting profiles to canonical radial domains (of disk- and annulus-type). 
A normalized boundary-response parameter (the $\overline{\tau}$-functions) then selects comparison and associated model pairs for each connected component of $\Omega\setminus \mathrm{Max}(u)$, leading to the gradient and geometric estimates stated in Subsection~\ref{subsec:ComRes}.

\subsection{Comparison framework}\label{subsec:ComFram}

Unlike the planar ring-shaped setting, where the reference family is fixed a priori, here the comparison family must be built from the data of \eqref{DP}. We do so by introducing boundary-adapted \emph{model manifolds}, realized as warped products. This class includes the space forms and allows curvature information to be encoded through the warping function; it has been used, for instance, in eigenvalue comparisons on geodesic balls \cite{chengEigenvalueComparisonTheorems1975,hurtadoEstimatesFirstDirichlet2016,palmerFirstDirichletEigenvalue2024a}. In these Riemannian manifold structure the Laplace--Beltrami operator takes a particularly simple form, which we will use repeatedly; see also \cite{dipierroClassificationStableSolutions2019,berchioStabilityQualitativeProperties2014, castorinaRegularityStableSolutions2015, bisterzoSymmetrySolutionsSemilinear2023, manciniSemilinearEllipticEquation2009} for existence and regularity results for semilinear problems on related models.

\begin{definition}[Model Manifold]\label{def:model}
	Given $k \in \r $, we say that a Riemannian manifold 
	$(\mathcal M ^n, g)$ is a {\it model manifold} if it is a warped product of the 
	form: 
	$$(\mathcal M _k ^n (F),g_k) :=(I _k\times F,\,dr^2+s_k(r)^2 g_F) ,$$
	where the fiber  $F$ is an $(n-1)-$dimensional smooth compact Riemannian manifold and 
	$I_k := [0 ,\bar r _k)$, where $\bar r _ k := +\infty$ if $k \leq 0 $ and 
	$\bar r _k := \pi/\sqrt{k}$ if $k > 0$. The warping function is defined as,
	 \begin{equation}\label{eq:sk}
			 s_k (r) := 
	 	\left\{ \begin{matrix} \frac{\sinh (\sqrt{-k} r)}{\sqrt{-k}} & \text{ if } & k<0 \\[1mm] 
			r & \text{ if } & k=0 ,\\[1mm]
			 \frac{\sin (\sqrt{k} r)}{\sqrt{k}} & \text{ if } & k>0, \end{matrix}\right. 
	 \end{equation} 
\end{definition}
\begin{remark}[Conical singularity when $(F,g_F)\ncong\mathbb{S}^{n-1}$]\label{rem:conical-tip}
If $(F,g_F)\ncong(\mathbb S^{n-1},g_{\rm round})$, the warped product metric
$g_k=dr^2+s_k(r)^2g_F$ does not extend smoothly across $r=0$.
Indeed, since $s_k(r)=r+O(r^3)$ as $r\to0$, a smooth completion by adding a pole $o$
would force $g_k$ to be asymptotically Euclidean in geodesic polar coordinates,
which is equivalent to the link metric $g_F$ being the unit round metric.
In general, the metric completion has an isolated \emph{conical singularity} at $r=0$,
locally modeled on the cone $(0,\varepsilon)\times F$ with metric $dr^2+r^2g_F$; see \cite{SchroheConical}.
\end{remark}

For later use we introduce the following auxiliary notation, 
\[
\cot_k (r):= \frac{s_k' (r)}{s_k (r)}\quad \forall r \in (0, \bar r_k) \quad \textup{and} \quad \tan_k (r) = \frac{s_k (r)}{s_k ' (r)} ,  \quad \forall r \in [0, \bar r_k)\setminus \{\bar r_k/2\}.
\]
Since the metric has a warped product structure, the Laplace--Beltrami operator on a model manifold takes the following form,
\begin{equation}\label{eq:laplace-forms}
\Delta=\partial_r^2+(n-1)\, \cot _k(r)\,\partial_r+\frac{1}{s_k(r)^2}\,\Delta_{F},
\end{equation}

Having fixed the class of model manifolds, we now introduce the corresponding model solutions.
In the space--form case (that is, when \(F=\mathbb S^{n-1}\) with the round metric up to scaling), the natural reference class consists of radial solutions about a point \(o\), namely functions of the distance
\(r(p)=\mathrm{dist}(o,p)\).
More generally, on a model manifold \( (\mathcal M_k^n(F), g_k) \) with \( n \ge 2 \), we call \emph{radial} those solutions of \eqref{DP} that depend only on the base coordinate \(r\).

A direct computation using \eqref{eq:laplace-forms} shows that if \( u(p)=U(r(p)) \),
with \( U\in \mathcal{C}^2(\mathbb R) \), then \( u \) solves $\Delta u +f(u)=0$ if and only if \( U \) solves the ordinary differential equation
\begin{equation}\label{ODERadial}
U''(r) + (n-1)\,\cot_k(r)\,U'(r) + f\big(U(r)\big) = 0,
\end{equation}
for \( r\in(0,\bar r_k) \), where \( \bar r_k := \pi/\sqrt{k} \) if \( k>0 \)
and \( \bar r_k := +\infty \) otherwise. Thus, we can find solutions \( U \) of \eqref{ODERadial} satisfying the Cauchy conditions
\begin{equation}\label{CauchyData}
U(R)=M
\quad \text{and} \quad
U'(R)=0,
\end{equation}
for some \( R\in[0,\bar r_k) \) and \( M>0 \).  The parameter \( R \) will be referred to as the \emph{core radius}.
For \( R>0 \), the maximum of \( u \) is attained on the hypersurface \( \{r=R\} \),
whereas for \( R=0 \) the maximum is attained at the origin (the \emph{centered} case).

For the comparison we need profiles that stay positive between their first and last zeros, are strictly monotone away from $r=R$, and vanish at the boundary radii of the associated model domain, see Figure~\ref{fig:radprof}.  Since this is not automatic for a general locally Lipschitz $f$, we encode it in the notion of $k$--admissibility; see Definition~\ref{def:admissible-f}.

\begin{figure}
\centering
	\begin{tikzpicture}
  \begin{axis}[
    axis lines=middle,
    xlabel={$r$},
    ylabel={$U_{0,M,k,f}(r)$},
	xtick={0.1,1},
  xticklabels={{$R=0$},{$r_{+}(0,M,k,f)$}},
  ytick = {0.5},
  yticklabels={$M$},
    domain=0:4,
    samples=100,
    xmin=0, xmax=1.5,
    ymin=0, ymax=1,
    grid=none,
    width=5.5cm,
    height=5.5cm
  ]
    \addplot[thick,smooth, mark=none, color= blue,domain=0:1] { (1 - x^2)/2 };
	\addplot[only marks, mark=*, mark size=1.8pt, color = blue] coordinates {(1,0)};

	\addplot[color=blue,opacity=1,  line width=3pt] coordinates {(0.01,0) (1,0)};

	\addplot[only marks, mark=*, mark size=1.8pt, color = black] coordinates {(0,0.5)};
  \end{axis}
\end{tikzpicture}
		\begin{tikzpicture}
  \begin{axis}[
    axis lines=middle,
    xlabel={$r$},
    ylabel={$U_{R,M,k,f}(r)$},
	xtick={1.1,2,3.1},
  xticklabels={{$r_{-}(R,M,k,f)$},{$R$},{$r_{+}(R,M,k,f)$}},
    ytick = {1},
  yticklabels={$M$},
    domain=0:4,
    samples=100,
    xmin=0.9, xmax=3.5,
    ymin=0, ymax=1.5,
    grid=none,
    width=8.5cm,
    height=5.5cm
  ]

	\addplot[thick,smooth, mark=none, color = blue,domain=1.0:2] { (1 - 2*x^2)/4 + 4*ln(abs(x))};
	\addplot[thick,smooth, mark=none, color = blue,domain=2.0:3.1] { (1 - 2*x^2)/4 + 4*ln(abs(x))};

	\addplot[dashed, thick] coordinates {(2,0) (2,1)};

	\addplot[only marks, mark=*, mark size=1.8pt, color = black] coordinates {(2,1.03)};
	\addplot[only marks, mark=*, mark size=1.8pt, color = blue] coordinates {(1.092,0)};
	\addplot[color=blue,opacity=1,  line width=3pt] coordinates {(1.1,0) (2,0)};

		\addplot[color=blue,opacity=1,  line width=3pt] coordinates {(2,0) (3.1,0)};

	\addplot[only marks, mark=*, mark size=1.8pt, color = blue] coordinates {(3.092,0)};
  \end{axis}
\end{tikzpicture}
	\caption{Here we show the profile of a solution $U_{R,M,k,f}$ to \eqref{ODERadial} when $f$ is $k$--admissible.
\emph{Left:}  case $R=0$, where the solution $U_{0,M,k,f}$ is positive on $(0,r_{+}(0,M,k,f))$, vanishes at $r=r_{+}(0,M,k,f)$, and attains its maximum at the origin. 
\emph{Right:}  case $R>0$, where $U_{R,M,k,f}$ is supported on the interval $(r_{-}(R,M,k,f),r_{+}(R,M,k,f))$, vanishes at both boundary radii, and attains its maximum at $r=R$. }
\label{fig:radprof}
\end{figure}
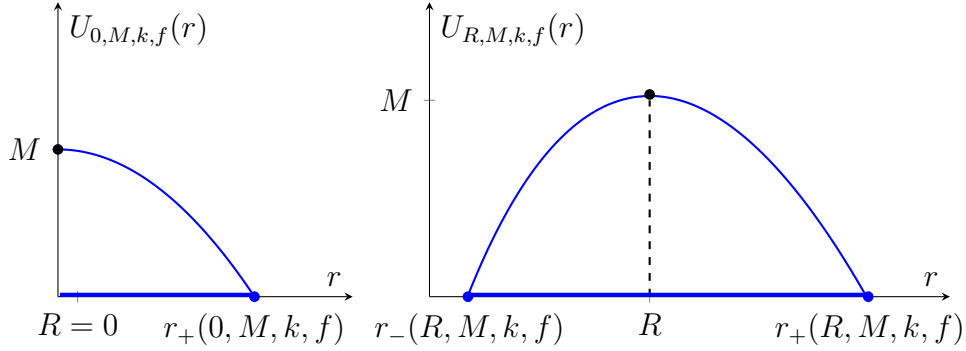

\begin{definition}[Admissible nonlinearities] \label{def:admissible-f}
Given $k \in \mathbb{R}$, a locally Lipschitz function $f\in\textup{Lip}_{loc}(\mathbb{R})$ is called $k-$\emph{admissible} if there exist non-empty connected intervals 
\[
0 \in \mathcal{R}_f \subset [0,\bar r_k)
\quad\text{and}\quad
\mathcal{I}_f \subset \r_+ \text{ with } 0 \in \partial \mathcal{I}_f
\]
such that, for every \((R,M)\in \mathcal R_f\times \mathcal I_f\), there exists a maximal solution \(U_{R,M,k,f}\) of \eqref{ODERadial}--\eqref{CauchyData} that is defined
on an interval that contains a closed interval \([r_-(R,M,k,f),r_+(R,M,k,f)]\) with
\[
0\leq r_-(R,M,k,f)\leq R<r_+(R,M,k,f)\le \bar r_k,
\]
and satisfies:
\begin{itemize}
\item If $R=0$, then $r_-(0,M,k,f) = 0 < r_+(0,M,k,f)$, 
\begin{itemize}
\item $U_{0,M,k,f} > 0$  on $ [0,r_+(0,M,k,f))$, 
\item $U'_{0,M,k,f} \neq 0 $  on $ (0,r_+(0,M,k,f))$, and 
\item $U_{0,M,k,f}(r_+(0,M,k,f)) = 0$.
\end{itemize}

\item If $R>0$, then $ 0 < r_-(R,M,k,f) < R < r_+(R,M,k,f) $, 
\begin{itemize}
\item $U_{R,M,k,f} > 0$  on $(r_-(R,M,k,f),r_+(R,M,k,f))$, 
\item $U'_{R,M,k,f} \neq 0$  on $(r_-(R,M,k,f),r_+(R,M,k,f)) \setminus \{R \}$, and 
\item $U_{R,M,k,f}(r_\pm(R,M,k,f)) = 0$.
\end{itemize}
\end{itemize}
\end{definition}
The family of $k$--admissible functions comprises a wide class of nonlinearities, for example the Helmholtz equation, studied in \cite{Ros}, and its affine version; the Lane--Emden equation $f(x)=x|x|^p$; or Allen--Cahn--type terms of the form $f(x)=x-x^p$, where $p \in(1,\frac{n+2}{n-2})$ in $\mathbb{S}^d$, for $n>1$, as studied in \cite{ruizOverdeterminedEllipticProblems2023a}.

 When $k>0$, the solutions to \eqref{ODERadial}-\eqref{CauchyData} enjoys the following symmetry: if \(U_{R,M,k,f}(r)\) is a solution with core radius \( R\), then the solution with core radius \(\bar r_k- R\) satisfies that $U_{R,M,k,f}(r)=U_{\bar r_k-R,M,k,f}(\bar r_k-r)$. Thus, the solution whose core radius is $R=\bar r_k/2$  is symmetric with respect to the line $r=R$.

Note that $k$--admissibility forces $f>0$ on $\mathcal I_f$. Otherwise, if there exists $M_0 \in \mathcal{I}_f$ with $f(M_0)\leq 0$, then \eqref{ODERadial}-\eqref{CauchyData} would imply that all functions $U_{R,M_0,k,f}$ have either a minimum or a saddle point at $r=R$, contradicting the properties stated above. 

In general, it is not clear what the most general conditions on $f$ are that ensure $k$--admissibility for the problem \eqref{ODERadial}-\eqref{CauchyData}. We will be able to guarantee $k$--admissibility of a locally Lipschitz function $f\in \operatorname{Lip}_{loc}(\mathbb{R})$ under the following conditions: 

\begin{refquote}{Standard Conditions}{Standard Conditions}
\quotemark{quote:StandardCondition}{standard conditions} 

There exists an interval $I_f \subset \r_+$ with $0 \in \partial I_f$ such that
\[
f(x)>0 \quad \textup{and} \quad f(x) \geq nk x + f(0), \quad \forall x \in I_f,
\]
where $f(0)>0$ if $k \leq 0$. 
\end{refquote}
The \nameref{quote:StandardCondition} take a particularly simple form when $k=0$, since they reduce to
$f(x)\ge f(0)$ on $I_f$, so $f$ is bounded from below by its value at the origin. Indeed, the function \( f(x) \equiv 0 \) is not \( 0 \)-admissible.
When $k<0$, the term $nkx$ is non-positive for $x\ge0$, hence the constraint
$f(x)\ge nkx+f(0)$ is weaker than $f(x)\ge f(0)$; in particular, it does \emph{not} force $f$ to be non-decreasing near $0$, but it still requires $f(x)>0$ on $I_f$.
In fact, the function \( f(x) = nkx \) is not \( k \)-admissible when \( k < 0 \).

The case $k>0$ (for instance on spherical geometries) is more restrictive, because $nkx$ is increasing and
the condition imposes a definite lower growth in $x$. For example, for an affine nonlinearity
$f(u)=\lambda u+\beta$ (an affine Helmholtz-type term), the inequality
$f(x)\ge nkx+f(0)$ is equivalent to $\lambda\ge nk$ (since $f(0)=\beta$), together with $f(x)>0$ on $I_f$.
Another standard example is the Liouville--Bratu--Gelfand--type nonlinearity $f(u)=K e^{2u}$ with $K\in \mathbb{R}$,
so the standard condition holds provided $K\ge nk$. The study of this problem in dimension 2 is linked to prescribing curvatures in the disk via conformal changes; see, for example, \cite{ruizConformalMetricsDisk2024, lopez-sorianoPrescribingCurvaturesDisk2025}.

For power-type nonlinearities, one should keep in mind the positivity requirement $f(x)>0$ on $I_f$.
For instance, $f(x)=x-x^p$ is non-negative for $x\in[0,1]$, hence it satisfies the standard conditions on an interval with $0$ in its boundary.
Similarly, for Brezis--Nirenberg-type terms of the form $f(x)=x^{2^*-1}+\lambda x$ (with $2^*=\frac{2n}{n-2}$ for $n\ge3$), the inequality near $0$ forces $\lambda\ge nk$.

The following result is proved in Appendix \ref{app:radial-ode}.
\begin{proposition}\label{prop_ODE1}
Let $k \in \r$ and $f \in \textup{Lip}_{loc} (\r)$ satisfying the \nameref{quote:StandardCondition}. Then $f$ is $k-$admissible with admissible sets $\mathcal{I}_f = I_f$ and $\mathcal{R}_f = [0,  \bar r_k)$.
\end{proposition}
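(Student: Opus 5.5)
Fix $(R,M)\in[0,\bar r_k)\times I_f$ and let $U$ be the maximal solution of \eqref{ODERadial}--\eqref{CauchyData}. Local existence and uniqueness hold for $R>0$ because $f$ is locally Lipschitz and $\cot_k$ is smooth near $R$. For $R=0$ we rewrite the equation as the integral equation $U(r)=M-\int_0^r s_k(t)^{1-n}\int_0^t s_k(\sigma)^{n-1}f(U(\sigma))\,d\sigma\,dt$ and apply a contraction argument. The core tool is the divergence form
\[
\bigl(s_k^{n-1}U'\bigr)'=-s_k^{n-1}f(U),
\]
and the observation that $f(M)>0$ forces $U''(R)=-f(M)<0$ when $R>0$, respectively $U''(0)=-f(M)/n<0$ when $R=0$. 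Hence $R$ is a strict local maximum.

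The monotonicity part comes first. On any interval to the right of $R$ where $0<U\le M$ and $U\in I_f$, we have $f(U)>0$, so $s_k^{n-1}U'$ is strictly decreasing. Since it vanishes at $R$, we get $U'<0$ there, and symmetrically $U'>0$ to the left of $R$. To stay inside $I_f$ I need $I_f$ to contain $(0,M]$. This holds because $I_f$ is an interval with $0\in\partial I_f$ and $M\in I_f$, so the range $(0,M]$ is controlled as long as $U$ stays positive. Consequently $U$ is strictly monotone on each side until it either hits $0$ or the solution reaches the endpoint of $(0,\bar r_k)$. Blow-up cannot occur before that, since $0<U\le M$ and $U'$ is bounded through the integral formula.

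The main obstacle is showing that $U$ actually reaches zero before $r=\bar r_k$ on the right, and before $r=0$ on the left when $R>0$. This is where $f(x)\ge nkx+f(0)$ enters. My plan is a Sturm-type comparison with the explicit model $V$ solving $V''+(n-1)\cot_kV'+nkV+f(0)=0$. The functions $V=c\,s_k'+\text{const}$ (for $k\neq0$) or $V=c-\frac{f(0)}{2n}r^2$ (for $k=0$) solve the centered case, and $k$-modified logarithmic terms handle the core case. I would consider $W=U-V$ with matching data at $R$. Then
\[
W''+(n-1)\cot_kW'+nkW=-(f(U)-nkU-f(0))\le0,
\]
and comparing with the positive first-order solution of the homogeneous linear equation (an integrating-factor/Wronskian argument) gives $U\le V$ on the monotone region. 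Thus $U$ vanishes no later than $V$ does.

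When $k\le0$ we use $f(0)>0$, so $V$ (e.g.\ the torsion-type profile) has a finite zero. When $k>0$, I would instead use the fact that the homogeneous solutions $s_k'$ change sign at $\bar r_k/2$. Together with $f(0)\ge$ whatever sign is available, this forces a zero before $\bar r_k$, with the endpoint case $r_+=\bar r_k$ allowed by the definition. For the left side with $R>0$, as $r\downarrow0$ the term $s_k^{n-1}U'$ stays positive and bounded below, so $U'\gtrsim s_k^{1-n}$ is not integrable for $n\ge2$. Hence $U$ cannot remain positive down to $0$, giving $0<r_-<R$. Collecting these facts yields all the properties in Definition~\ref{def:admissible-f} with $\mathcal I_f=I_f$ and $\mathcal R_f=[0,\bar r_k)$. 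I expect the delicate verification to be the $k>0$ right-endpoint case, where the comparison function must be chosen carefully.
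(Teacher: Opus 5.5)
\textbf{Gap: the right-endpoint step for $k<0$ fails for large $M$.} You compare $U$ with the profile $V$ for $\tilde f(x)=nkx+f(0)$. That profile has a finite zero only if $\tilde f(M)=nkM+f(0)>0$, i.e.\ $M<f(0)/(n|k|)$.

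If $M\ge f(0)/(n|k|)$, set $Y:=V-f(0)/(n|k|)$. Then $(s_k^{n-1}Y')'=n|k|\,s_k^{n-1}Y$ with $Y(R)\ge 0$ and $Y'(R)=0$, so $Y\ge 0$ on both sides of $R$. Hence $V\ge f(0)/(n|k|)>0$ everywhere, and $U\le V$ yields no zero.

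The Standard Conditions allow such $M$. For $k=-1$ and $f\equiv 1$ one has $I_f=(0,\infty)$. With $M=1$ and $R=0$ your profile is $V=\frac{1}{n}+\bigl(1-\frac{1}{n}\bigr)\cosh r$, which never vanishes, although $U$ does. So the sentence ``when $k\le 0$ we use $f(0)>0$, so $V$ has a finite zero'' is false for $k<0$.

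\textbf{Fix for $k\le 0$.} Compare with a constant rather than with the $f_k$-profile. Since $f$ is continuous and positive on the compact interval $[0,M]$ (this is where $f(0)>0$ enters), $c_0:=\min_{[0,M]}f>0$. The divergence form then gives, as long as $U>0$ on $(R,r)$,
\[
U'(r)\le -c_0\,s_k(r)^{1-n}\int_R^r s_k(t)^{n-1}\,dt .
\]
For $k\le 0$ the right-hand side is bounded above by a negative constant for large $r$. It behaves like $-c_0 r/n$ if $k=0$ and tends to $-c_0/((n-1)\sqrt{-k})$ if $k<0$. Hence $U$ reaches $0$ at a finite $r_+$. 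The hypothesis $f(x)\ge nkx+f(0)$ plays no role in this step.

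\textbf{The $k>0$ case is not argued.} The Wronskian comparison with $\phi=s_k'$ is valid only where $s_k'>0$, i.e.\ on $(0,\bar r_k/2)$. So it does not even start when $R\ge \bar r_k/2$, and you give no mechanism beyond $\bar r_k/2$.

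Nothing delicate is actually needed: your own left-endpoint argument applies verbatim as $r\uparrow \bar r_k$. While $U>0$ on $(R,r)$, the quantity $s_k^{n-1}U'$ is negative and decreasing, so it is $\le -c<0$ near $\bar r_k$. Since $s_k(r)=\sin(\sqrt{k}(\bar r_k-r))/\sqrt{k}\le \bar r_k-r$, you get $U'\le -c(\bar r_k-r)^{1-n}$, which is not integrable for $n\ge 2$. Thus $U$ vanishes strictly before $\bar r_k$, using only $f>0$ on $(0,M]$, and the ``endpoint case $r_+=\bar r_k$'' never occurs.

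This is why the paper disposes of $k>0$ by appeal to Lemma~\ref{lemma_exist-ODE-isop}, which requires only positivity of $f$. For $k\le 0$ the paper uses a PDE touching argument with a Serrin-type ball solution instead of your ODE Sturm comparison.
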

\begin{remark}
The previous proposition  remains true if the \nameref{quote:StandardCondition} are replaced by a  more technical assumption: namely, that \( f \geq \tilde{f} \) on \( I_f \), where \( \tilde{f} \) admits a solution to \eqref{ODERadial} with maximum value \( M > 0 \) on a bounded interval, for every \( M \in I_f \). This can be seen by inspecting the proof in  Appendix~\ref{app:radial-ode}.
\end{remark}

Given a model manifold $(\mathcal{M}_k^n (F), g_k)$, assume $f\in \textup{Lip}_{loc}(\mathbb{R})$ is a $k-$admissible function. Let $U_{R,M,k,f}$ be the solution of \eqref{ODERadial}--\eqref{CauchyData} constructed above. The profile of $U_{R,M,k,f}$ depends on the geometric data, the dimension $n$, the curvature bound $k$, the nonlinearity $f$, and on the Cauchy data, $(R,M)$, but not on the fiber $F$. For each $(R, M ) \in \mathcal{R}_f \times \mathcal{I}_f$, define the domain
	\[
	\Omega_{R,M,k,f}:=
	\begin{cases}
		\{\,p \in \mathcal{M}_k^n (F) :  r (p)<r_+(0,M ,k,f)\,\}, & R=0,\\[2pt]
		\{\,p \in \mathcal{M}_k^n (F) : r_-(R,M,k,f)<r (p)<r_+(R,M,k,f)\,\}, & R \neq 0.
	\end{cases}
	\] and set
	\[
	u_{R,M,k,f}(p):=U_{R,M ,k,f} (r(p)),\qquad p\in \Omega_{R,M,k,f}.
	\]
	
Then \((\Omega_{R,M,k,f},u_{R,M,k,f})\) solves \eqref{DP} and on each  boundary component of $ \Omega_{R,M,k,f}$ the the norm of the gradient of $u_{R,M,k,f}$ is locally constant on each boundary component. 

The pairs \((\Omega_{R,M,k,f},u_{R,M,k,f})\) comprise all positive radial solutions of \eqref{DP} with constant Neumann datum in each connected component of  the model manifold and will be the reference objects for comparison. Moreover: 
\begin{itemize}
\item If $R =0$, $\Omega_{0,M,k,f}$ is called the \textit{centered domain}, and we set 
\[
\Gamma_{0,M,k,f} = u^{-1}_{0,M,k,f}(M) \text{ and }\Gamma_{0,M,k,f}^{+} = \{\,p \in \mathcal{M}_k^n (F) : r (p)=r_+(0,M ,k,f)\,\}. 
\]
We also denote by $\U_{0,M,k,f}^{+} \subset \Omega_{0,M,k,f} \setminus \Gamma_{0,M,k,f}$ the subdomain of $\Omega_{0,M,k,f}$ without maximum points and we set $\U_{0,M,k,f}^{-}=\emptyset$.  The boundary of $\U ^+ _{0,M,k,f}$ is connected if $F$ is a connected manifold.

\item If $R \in (0, \bar r_k)$, we denote 
\[
\Gamma_{R,M,k,f} = u^{-1}_{R,M,k,f}(M) \text{ and } \Gamma_{R,M,k,f}^{\pm} = \{\, p \in \mathcal{M}_k^n (F) : r (p)=r_\pm (R,M ,k,f)\,\}, 
\]and $\U_{R,M,k,f}^{\pm} \subset \Omega_{R,M,k,f} \setminus \Gamma_{R,M,k,f}$ the subdomains of $\Omega_{R,M,k,f}$ without maximum points such that $\textup{cl} (\U_{R,M,k,f}^{\pm}) \cap \partial \Omega_{R,M,k,f} = \Gamma_{R,M,k,f}^{\pm}$.
\end{itemize}

\begin{definition}[Model pairs]\label{def:model-pairs}
Given a model manifold $(\mathcal{M}_k^n (F), g_k)$, let $f\in \textup{Lip}_{loc}(\mathbb{R})$ be a $k-$admissible function. For \((R,M)\in \mathcal R_f\times \mathcal I_f\), define the sets $\U_{R,M,k,f}^{\pm}$ as above
and the corresponding functions
\[
u_{R,M,k,f}^\pm(p):=U_{R,M,k,f}(r(p)) \quad \text{ on } \U_{R,M,k,f}^\pm.
\]
Then, the pairs \((\U_{R,M,k,f}^\pm,u_{R,M,k,f}^\pm)\) are called 
\textup{model pairs} and we denote by ${\rm Model}_{M,k,f}^{\pm}$ the family of 
model pairs $(\mathcal{U}^\pm_{R,M,k,f},u^\pm_{R,M,k,f})$ obtained by varying the 
\emph{core radius} $R\in\mathcal{R}_f$ of the solution. 

We call \((\U_{0,M,k,f}^+,u_{0,M,k,f}^+)\) the \textit{centered} model pair of ${\rm Model}_{M,k,f}^+$. Also, we write $\textup{Model}_{M,k,f} = {\rm Model}_{M,k,f}^{+} \sqcup {\rm Model}_{M,k,f}^{-}$ for the complete family of model pairs.

\end{definition}

\subsection{Model \texorpdfstring{$\overline\tau$}{tau}-functions}\label{subsec:model-tau}

To track how the choice of initial data affects the boundary behavior of these solutions, we introduce the $\overline{\tau}$-functions. These functions encode the boundary response of the radial profiles and provide a convenient way to compare their boundary behavior with that of the centered solution.

Given a model manifold $(\mathcal{M}_k^n (F), g_k)$, let $f\in \textup{Lip}_{loc}(\mathbb{R})$ be a $k-$admissible function. Since \(f\circ u_{0,M,k,f}^+\) is positive in \(\U_{0,M,k,f}^+\) for each \(M\in \mathcal I_f\), Hopf's lemma yields that the centered model solution \((\U_{0,M,k,f}^+,u_{0,M,k,f}^+)\) determines a positive constant
\begin{equation}\label{eq:normalization-constant}
\mathsf c(M,k,f):=\bigl|\nabla u_{0,M,k,f}^+\bigr|^2_{|_{\Gamma_{0,M,k,f}^+}}
=
U'_{0,M,k,f}\bigl(r_+(0,M,k,f)\bigr)^2>0,
\end{equation}
which depends only on \((M,k,f)\) (and not on the choice of fiber \(F\)). We define the \emph{model \(\overline\tau\)-functions} by
\begin{equation}\label{eq:tau-model}
\overline\tau_{M,k,f}^\pm(R):=
\frac{U'_{R,M,k,f}\bigl(r_\pm(R,M,k,f)\bigr)^2}
     {U'_{0,M,k,f}\bigl(r_+(0,M,k,f)\bigr)^2}
=
\frac{|\nabla u_{R,M,k,f}^\pm|^2_{|_{\Gamma_{R,M,k,f}^\pm}}}
     {\mathsf c(M,k,f)},
\qquad R\in[0, \bar r_k).
\end{equation}
These ratios encode how the boundary gradient of a model solution compares to that of the centered solution, and they will be used to parametrize comparison pairs.  For us it will be useful to introduce the following notation for the lower and upper bounds of the functions $\overline\tau_{M,k,f}^{\pm}$,
\[
 \tau_{k,f}^-(M):=\inf_{R\in[0,\bar r_k)} \overline\tau_{M,k,f}^-(R),\quad
 \tau_{k,f}^+(M):=\sup_{R\in[0,\bar r_k)}\overline\tau_{M,k,f}^+(R)
\]
and
\[
\tau_{k,f}^0(M):=\inf_{R\in[0,\bar r_k)}\overline\tau_{M,k,f}^+(R).
\]

In \cite{ABM,ABBM,EMa}, the monotonicity of the $\overline{\tau}$-functions is clear due to the nature of the nonlinearity. However, in the present setting the monotonicity of the model $\overline{\tau}$-functions is not evident, and even determining the range of their images is not immediate. Thanks to the maximum principle we can state some results under some restrictions on the nonlinearity. 

In the special case $f(x)=f_k(x):=nkx+1$, the problem \eqref{ODERadial}--\eqref{CauchyData} corresponds to Serrin's equation.   In this case, it is easy to check that $\overline\tau_{M,k,f}^-(R)\to+\infty$ as $R \to 0^+$. We prove it in Proposition \ref{prop:infitytau}, in Appendix \ref{app:radial-ode}. In this appendix, we also provide the proof of the following result:
\begin{proposition}\label{prop:behav}
	Let $f$ be a function satisfying the \nameref{quote:StandardCondition}.  Then, for any $M \in \mathcal{I}_f$, $\overline\tau_{M,k,f}^-(R)\to+\infty$ as $R\to0^+$. Moreover, one has $ \tau_{k,f}^+(M)\le  \tau_{k,f}^-(M)<+\infty$ if $k\le0$, and $ \tau_{k,f}^+(M)\ge \tau_{k,f}^-(M)$ if $k>0$.
\end{proposition}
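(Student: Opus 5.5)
The plan is to obtain the three claims from two elementary identities for the radial ODE, together with the reflection symmetry $R\mapsto\bar r_k-R$ noted after Definition~\ref{def:admissible-f}. Set $F(t):=\int_0^t f$. By Proposition~\ref{prop_ODE1}, $f$ is $k$--admissible with $\mathcal I_f=I_f$ and $\mathcal R_f=[0,\bar r_k)$, so all profiles $U_{R,M,k,f}$ exist. I will use the energy $E(r):=\tfrac12 U'(r)^2+F(U(r))$ along a solution of \eqref{ODERadial}. It satisfies
\[
E'(r)=-(n-1)\cot_k(r)\,U'(r)^2 .
\]
Since $U(r_\pm)=0$, we have $E(r_\pm)=\tfrac12 U'(r_\pm)^2$. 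At the core radius, $E(R)=F(M)$.

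\emph{Blow-up of $\overline\tau^-_{M,k,f}(R)$ as $R\to0^+$.} Fix $M\in I_f$. The denominator $\mathsf c(M,k,f)$ in \eqref{eq:tau-model} is a fixed positive number, so it suffices to show $U'_{R,M,k,f}(r_-(R,M,k,f))\to+\infty$. On $(r_-,R)$ we have $U'>0$ by admissibility and the fact that $R$ is the maximum. Since $I_f$ is an interval with $0\in\partial I_f$, the values $U\in(0,M]$ lie in $I_f$, so $f(U)>0$ there. For $R<\bar r_k/2$ also $\cot_k>0$ on $(0,R)$. Hence
\[
U''=-f(U)-(n-1)\cot_k\,U'<0 \quad\text{on } (r_-,R),
\]
so $U'$ is decreasing there and $U'(r_-)$ is the maximum of $U'$ on $[r_-,R]$. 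By the mean value theorem,
\[
U'(r_-)\ \ge\ \frac{U(R)-U(r_-)}{R-r_-}=\frac{M}{R-r_-}\ \ge\ \frac MR\longrightarrow+\infty .
\]
This uses only $0<r_-<R$. I expect the only delicate point of the whole proof to be here: making sure $f(U)>0$ (or at least $U''<0$) along the whole profile. When $k>0$ we may have $f(0)=0$, but $f>0$ on $I_f$ still gives $f(U)>0$ for $U\in(0,M]$, and $U''<0$ follows in any case from the $\cot_k$ term.

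\emph{The case $k\le0$.} Here $\cot_k>0$ on $(0,\infty)$, so $E$ is non-increasing. Applying this on $[r_-,R]$ and on $[R,r_+]$ (and on $[0,r_+]$ in the centered case) gives
\[
\tfrac12 U'_{R,M,k,f}(r_+)^2\ \le\ F(M)\ \le\ \tfrac12 U'_{R,M,k,f}(r_-)^2 \qquad\text{for every } R .
\]
Dividing by $\mathsf c(M,k,f)$ shows that every value of $\overline\tau^+_{M,k,f}$ is at most $2F(M)/\mathsf c$, and every value of $\overline\tau^-_{M,k,f}$ is at least $2F(M)/\mathsf c$. Taking the supremum and the infimum yields $\tau^+_{k,f}(M)\le \tau^-_{k,f}(M)$. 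The infimum is finite because it is bounded by any single value $\overline\tau^-_{M,k,f}(R)$ with $R>0$, and each such value is a finite number.

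\emph{The case $k>0$.} I will use the symmetry $U_{R,M,k,f}(r)=U_{\bar r_k-R,M,k,f}(\bar r_k-r)$. It gives
\[
r_-(R,M,k,f)=\bar r_k-r_+(\bar r_k-R,M,k,f)
\]
and equal squared derivatives at the corresponding endpoints, hence
\[
\overline\tau^-_{M,k,f}(R)=\overline\tau^+_{M,k,f}(\bar r_k-R)\qquad\text{for } R\in(0,\bar r_k).
\]
Consequently every value of $\overline\tau^-$ is also a value of $\overline\tau^+$. Therefore
\[
\tau^-_{k,f}(M)\ \le\ \overline\tau^-_{M,k,f}(R)\ \le\ \tau^+_{k,f}(M)\qquad\text{for every } R\in(0,\bar r_k),
\]
which proves the claimed reverse inequality $\tau^+_{k,f}(M)\ge\tau^-_{k,f}(M)$.
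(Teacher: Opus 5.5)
Your proof is correct, and it takes a genuinely different, more elementary route than the paper.

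For the blow-up, the paper normalizes $f(0)=1$ and uses $f\ge nkx+1$ to compare $U_{R,M,k,f}$ with the Serrin profile via the maximum principle. It then imports the divergence of the Serrin boundary slope from the explicit formula of Proposition~\ref{prop:infitytau}, shifting by $1/(nk)$ when $f(0)=0$ and $k>0$. Your concavity-plus-mean-value argument needs only $f>0$ on $(0,M]$ and $\cot_k>0$ near $0$. It also gives the explicit rate $\overline\tau^-_{M,k,f}(R)\ge M^2/(\mathsf c(M,k,f)\,R^2)$.

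For $k\le0$, the paper shows $\overline\tau^+_{M,k,f}(R)<\overline\tau^-_{M,k,f}(R)$ for each $R>0$. It combines the blow-up, continuity in $R$, and Serrin's rigidity theorem (no annulus carries the same constant Neumann datum on both boundary components). That pointwise ordering at equal $R$ does not by itself give $\sup_R\overline\tau^+_{M,k,f}\le\inf_R\overline\tau^-_{M,k,f}$. That would need monotonicity, which the paper only has under \eqref{conditionf2}. Your monotone energy $E=\tfrac12U'^2+F(U)$ supplies exactly the $R$-independent separating constant $2F(M)/\mathsf c(M,k,f)$. The supremum--infimum inequality then follows at once, with no appeal to Serrin's theorem. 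Since $E'<0$ wherever $U'\neq0$, you even get strict pointwise inequalities.

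For $k>0$ the two arguments are essentially the same. The paper only uses that the branches agree at the symmetric radius $\bar r_k/2$, while you use the full reflection identity $\overline\tau^-_{M,k,f}(R)=\overline\tau^+_{M,k,f}(\bar r_k-R)$.

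You also rightly take the infimum defining $\tau^-_{k,f}(M)$ over $R\in(0,\bar r_k)$. Read literally at $R=0$, \eqref{eq:tau-model} would give $\overline\tau^-_{M,k,f}(0)=0$. The statement only makes sense with that value excluded, as the paper tacitly assumes.
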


Finally, note that under an additional condition on $f$ one can show monotonicity of $\overline\tau_{M,k,f}^\pm$. Again, we pospose the proof of this result to Appendix \ref{app:radial-ode}.
\begin{proposition}\label{prop:ODE2}
	Let $f\in \mathcal C^1(\mathbb{R})$ satisfy the \nameref{quote:StandardCondition}, and suppose there exists $\bar M>0$ such that $(0,\bar M)\subset\mathcal{I}_f$ and
	\begin{equation}\label{conditionf2}
	f(x)\ge x f'(x)\quad\text{for all }0<x<\bar M.
	\end{equation} 
	Then for any $M\in(0,\bar M)$ we have $ \tau_{k,f}^0(M)= \overline\tau_{M,k,f}^+ (0)=1$, and $\overline\tau_{M,k,f}^-(R)$ is strictly decreasing while $\overline\tau_{M,k,f}^+(R)$ is strictly increasing on $(0,\bar r_k)$.
\end{proposition}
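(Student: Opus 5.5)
We argue at the level of the radial ODE \eqref{ODERadial} and compare profiles with the same maximum $M$ but different core radii through the first integral and a linearized (variation) equation. The natural quantity is the energy
\[
E(r):=\tfrac12 U'(r)^2+F(U(r)),\qquad F(x)=\int_0^x f,
\]
which satisfies $E'(r)=-(n-1)\cot_k(r)\,U'(r)^2$. Evaluating between $R$ and $r_\pm$ gives
\[
\tfrac12 U'(r_\pm)^2=F(M)\mp(n-1)\int_{R}^{r_\pm}\!\!\bigl(\pm\cot_k\bigr)U'^2\,dr .
\]
So $\overline\tau^\pm_{M,k,f}(R)$ is expressed through weighted Dirichlet-type integrals of the profile. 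The monotonicity in $R$ will come from differentiating this identity in $R$.

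Concretely, set $V:=\partial_R U_{R,M,k,f}$. Since $U(R)=M$ and $U'(R)=0$ for all $R$, differentiating gives $V(R)=0$ and $V'(R)=-U''(R)=f(M)>0$. Moreover $V$ solves the linearized equation $V''+(n-1)\cot_k V'+f'(U)V=-(n-1)\partial_R\cot_k(r)\cdot U'$. Here I must be careful: $\cot_k$ does not depend on $R$, so the right-hand side is actually zero, and $V$ solves the homogeneous linearization $LV=0$ with $L=\Delta+f'(u)$. Then
\[
\tfrac{d}{dR}\bigl[U'(r_\pm)^2\bigr]=2U'(r_\pm)\bigl(U''(r_\pm)r_\pm'+V'(r_\pm)\bigr),
\]
and $r_\pm'=-V(r_\pm)/U'(r_\pm)$ from $U(r_\pm(R);R)=0$. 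I would therefore relate the sign of this derivative to the sign of $V$ and $V'$ at $r_\pm$.

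The key tool is condition \eqref{conditionf2}. The function $w=U$ satisfies $Lw=f'(U)U-f(U)\le0$, so $U$ is a positive supersolution of $L$ on $(r_-,r_+)$. A Sturm/Picone comparison between $V$ and $U$ on $[R,r_+]$ then shows that $V$ cannot vanish there, since $V$ starts with $V(R)=0$, $V'(R)>0$, and a positive supersolution vanishing at $r_+$ prevents an earlier oscillation. Hence $V>0$ on $(R,r_+]$, and $V<0$ on $[r_-,R)$ because $V'(R)>0$. Using the Wronskian identity $(s_k^{n-1}(UV'-U'V))'=s_k^{n-1}(f(U)-f'(U)U)V$, integrated from $R$ to $r_+$, I would get the sign of $-U'(r_+)V(r_+)$, hence of $V'(r_+)$ and of the derivative of $U'(r_+)^2$. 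This gives strict increase of $\overline\tau^+$ and, symmetrically on $[r_-,R]$, strict decrease of $\overline\tau^-$. Strictness comes from $f>xf'$ failing only on a null set or from the boundary term $UV'$ at $R$ being nonzero. Since the normalizing constant $\mathsf c(M,k,f)$ does not depend on $R$, monotonicity of the ratios follows.

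Finally, $\overline\tau^+_{M,k,f}(0)=1$ by definition. Continuity of $U_{R,M,k,f}$ and $r_+$ in $R$ at $R=0$, from continuous dependence together with the transversality $U'(r_+)\ne0$ given by Hopf, combined with the monotonicity above, gives $\tau^0_{k,f}(M)=\inf_R\overline\tau^+=\overline\tau^+(0)=1$.

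The main obstacle is the sign analysis of the Wronskian boundary terms, including the endpoint $R=0$, where $V$ must be handled via the singular coefficient $\cot_k$. I would treat $R=0$ by a limiting argument from $R>0$.
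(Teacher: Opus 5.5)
Your first step is sound. Integrate $\bigl(s_k^{n-1}(UV'-U'V)\bigr)'=s_k^{n-1}(f(U)-f'(U)U)V$ from $R$ to a hypothetical first zero of $V$. The boundary term $-s_k(R)^{n-1}Mf(M)<0$ then contradicts the non-negative right-hand side. Hence $V>0$ on $(R,r_+]$, and by the same argument $V<0$ on $[r_-,R)$. Strictness here comes from that boundary term; it does not come from $f>xf'$, which is not assumed.

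The gap is in the next sentence. Because $U(r_+)=0$, integrating the same identity up to $r_+$ only yields $-s_k(r_+)^{n-1}U'(r_+)V(r_+)>0$. That is just $V(r_+)>0$, equivalently $r_+'(R)>0$, which you already had, and it carries no information on $V'(r_+)$. However,
\[
\tfrac12\,\partial_R\bigl[U'(r_+)^2\bigr]=U'(r_+)V'(r_+)-U''(r_+)V(r_+),\qquad U''(r_+)=-(n-1)\cot_k(r_+)U'(r_+)-f(0).
\]
Neither $V'(r_+)$ nor $-U''(r_+)$ has a sign you control. So monotonicity of $\overline\tau^{\pm}_{M,k,f}$ does not follow from the Wronskian with $U$, which degenerates at the zeros of $U$.

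The missing idea is a second comparison function, namely $U'$ itself. Differentiating \eqref{ODERadial} and using $\cot_k'=-s_k^{-2}$ gives
\[
(U')''+(n-1)\cot_k(U')'+f'(U)U'=(n-1)s_k^{-2}U'.
\]
This is the radial part of the Jacobi field generated by the Killing field $\tilde Y$ that the paper invokes; it is the ODE counterpart of that argument. The quantity you need is then exactly the Wronskian of $U'$ and $V$ evaluated at $r_\pm$, and
\[
\bigl(s_k^{n-1}(U'V'-U''V)\bigr)'=-(n-1)s_k^{n-3}\,U'V>0 \quad\text{on } (r_-,R)\cup(R,r_+).
\]
The positivity comes from the sign of $V$ established in your first step, and that is precisely where \eqref{conditionf2} enters. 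Since $U'V'-U''V$ vanishes at $r=R$, it is strictly positive at $r_+$ and strictly negative at $r_-$. This gives strict increase of $\overline\tau^+_{M,k,f}$ and strict decrease of $\overline\tau^-_{M,k,f}$. Your continuity argument at $R=0$ then yields $\tau^0_{k,f}(M)=\overline\tau^+_{M,k,f}(0)=1$.
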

For future use, we define the \emph{admissible set} and \emph{gap} associated to \((M,k,f)\) as
\begin{equation}\label{eq:adm-gap}
{\mathrm{Adm}}_{M,k,f}:={\rm Im}(\overline\tau_{M,k,f}^-)\cup {\rm Im}(\overline\tau_{M,k,f}^+),
\quad
{\mathrm{ Gap}}_{M,k,f}:=\r_+^*\setminus ({\rm Adm}_{M,k,f} \cup [0, \tau_{k,f}^0(M)]).
\end{equation}
Note that, by the continuous dependence with respect to the initial data of solutions to differential equations with locally Lipschitz coefficients (see for example \cite[Theorem 2.21]{EV}), it follows that ${\rm Im}(\overline\tau_{M,k,f}^-)$ and ${\rm Im}(\overline\tau_{M,k,f}^+)$ are connected intervals. Therefore, ${\rm Adm}_{M,k,f}$ is an interval or the union of two disjoint intervals, and then ${\rm Gap}_{M,k,f}$ can be empty, a point or the union of intervals with non-empty interior. As we shall see later, these three options can actually occur (see  Remark \ref{rem:gap-not-trivial} and Corollary \ref{coro_existenceComparisonPairs}).

Proposition \ref{prop:behav} implies that, when $f$ satisfies the \nameref{quote:StandardCondition}, $\mathrm{Gap}_{M,k,f}= \emptyset$ when $k>0$ and consists of a bounded interval with extrema $ \tau_{k,f}^+(M)$ and $\tau_{k,f}^-(M)$ if $k\leq 0$. Here, we consider that $[a,a] = \set{a}$ for any $a \in \r$.

\begin{remark}\label{rem:gap-not-trivial}
	Proposition \ref{prop:ODE2} and the symmetry of the solutions to \eqref{ODERadial}-\eqref{CauchyData} when $k>0$ imply that, when $f$ satisfies condition \eqref{conditionf2} and  \nameref{quote:StandardCondition},  we get that
	\begin{equation}\label{eq:gap}
 \tau_{k,f}^\pm (M) =	\displaystyle\lim_{R \to \bar r_k/2} \overline\tau_{M,k,f}^\pm (R), \quad \forall M \in \mathcal{I}_f.
	\end{equation}
	When $k=0$ and $f$ is the Serrin's problem non-linearity this result also holds.
	This is already noted in \cite{ABM, ABBM},  and it also holds that $\overline \tau^+_{0,f_0}(M)=\overline \tau^-_{0,f_0}(M) $.
	This implies that ${\rm Gap}_{M,0,f_0}$  consists of a single value, $\{\overline \tau^+_{0,f_0}(M) \}$. 
	However, when $k<0$ using \eqref{eq:gap} we can show that ${\rm Gap}_{M,k,f_k}$ consists of a closed interval with non-empty interior.
	
	In fact, given any $M \in (0,1/n)$, write $V_{R,M,-1} (r)$ for the solution to \eqref{ODERadial}-\eqref{CauchyData} with $k=-1$ and $f(x)=f_{-1}(x) = -nx+1$. Then, by doing the change of variable $r =R+s$, it is straightforward to show that $V_{R,M,-1} $ converges (to first order) to the function 
	\[
	V_{\infty,M}(s) = 1-\frac{1}{(n+1)(M+1)}\left(  e^{-ns}+n\ e^{s} \right)
	\]
	as $R \to +\infty$. Thus, if $s_- <0< s_+$ denotes the two roots of $V_{\infty,M}$, we can check that
	\[
	\begin{split}
		\lim_{R\rightarrow \infty} -V'_{R,M,-1}(r_+) & \approx \frac{n}{(n+1)(M+1)}\left(e^{s_+}-e^{-ns_+} \right)  \\
		&< \frac{n}{(n+1)(M+1)}\left(e^{-ns_-}-e^{s_-} \right) \approx \lim_{R\to \infty} V'_{R,M,-1}(r_-).
	\end{split}
	\]
	By denoting $U_{R,M,-1,f_{-1}} = u_R$  and $r_{\pm}(R,M,-1,f_{-1}) = r_{\pm} (R)$ with $M =\cosh (3)-1$, in Figure \ref{fig:gap} we obtain numerically the sizes of the interval $\textup{Gap}_{M,-1,f_{-1}}$ for different values of $n$.
\end{remark}
\begin{figure}[h]
	\centering
	\includegraphics[width=0.4\textwidth]{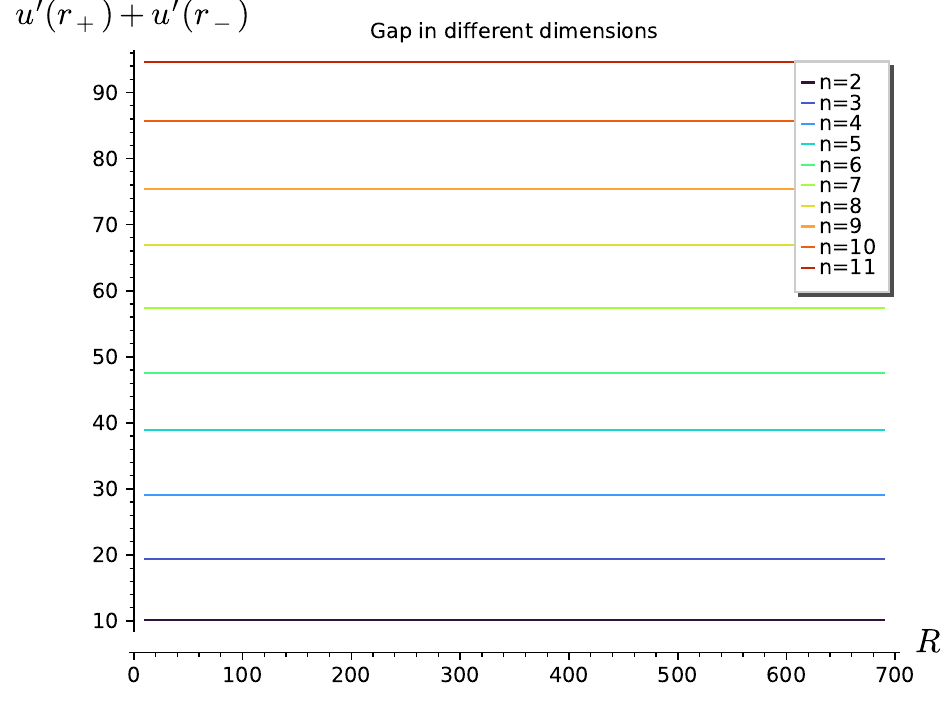}
	\caption{In this graph we picture the function $s(R)=| u_R' (r_- (R))+ u_R' (r_+(R)) |$, where $u_R$ is a solution for the Serrin's problem as stated in Remark \ref{rem:gap-not-trivial} with Cauchy data \eqref{CauchyData},  for different dimensions $n$, which are represented with different colors. Note that these functions are so close to be constants for $R$ big that they look as straight lines in the scale presented in this picture. Observe also that $\mp u_R'(r_{\pm} (R)) =  c(M,-1,f_{-1}) \cdot \tau_{M,-1,f_{-1}}^\pm(R)$, so each graph have the horizontal line of height $c(M,-1,f_{-1}) \cdot \textup{Length} ({\rm Gap}_{M,-1,f_{-1}})$ as asymptote.}\label{fig:gap}
\end{figure}

\subsection{Construction of the comparison framework}\label{subsec:ComMeth}
Let \((\mathcal M,g)\) be an \(n\)-dimensional complete Riemannian manifold with a Ricci lower bound $\operatorname{Ric}_g \ge (n-1)k\,g$, and let \(\Omega\subset \mathcal M\) be a bounded \(\mathcal{C}^2\)-domain. Fix \(f\in \mathrm{Lip}_{\mathrm{loc}}(\mathbb R)\) that is \(k\)--admissible in the sense of Definition~\ref{def:admissible-f}, and consider a solution \((\Omega,u)\) of~\eqref{DP}.

The problem~\eqref{DP} is invariant under isometries: if \(\mathcal I\in \mathrm{Iso}(\mathcal M)\), then \((\mathcal I(\Omega),\,u\circ \mathcal I^{-1})\) also solves~\eqref{DP}. We say that \((\Omega,u)\) and \((\tilde\Omega,\tilde u)\) are \emph{congruent}, denoted \((\Omega,u)\equiv (\tilde\Omega,\tilde u)\), if there exists \(\Phi\in  \mathrm{Iso}(\mathcal M)\) such that \(\Phi(\Omega)=\tilde\Omega\) and \(\tilde u = u\circ \Phi^{-1}\) in \(\tilde\Omega\).

\paragraph{Step 1. Choosing a connected component.}
Working up to congruence, fix \((\Omega,u)\) and set \(M:=u_{\textup{max}}\in \mathcal I_f\).
Let \(\mathcal U\) be a connected component of \(\Omega\setminus \mathrm{Max}(u)\).
Since $u$ is a solution to \eqref{DP} and $f$ is $k$--admissible, it follows from the maximum principle that $\partial\Omega\cap \textup{cl}({\mathcal U})\neq\emptyset$. 

\begin{definition}\label{def:tau-U}
For each connected component \(\Gamma\in\pi_0(\partial\Omega)\), define
\[
\overline\tau(\Gamma):=\max_{p\in\Gamma}\frac{|\nabla u(p)|^2}{\mathsf c(M,k,f)}.
\]
Then, for each $\mathcal U \in \pi _0 \left( \Omega\setminus\mathrm{Max}(u)\right)$, set 
\[
\overline\tau(\mathcal U):=\max\bigl\{\overline \tau(\Gamma):\Gamma\in\pi_0(\partial\Omega\cap\textup{cl} (\U))\bigr\}.
\]
For the whole domain we define, 
\[
\overline\tau(\Omega):=\max\bigl\{\overline \tau(\mathcal{U}):\mathcal{U} \in\pi_0(\Omega\setminus\mathrm{Max}(u))\bigr\}.
\]
\end{definition}

\paragraph{Step 2. Construction of the model manifold.}
Choose a boundary component \(\Gamma_0\in\pi_0(\partial\Omega\cap \textup{cl}({\mathcal U}))\) such that
\(\overline\tau(\Gamma_0)=\overline\tau(\mathcal U)\). We then work on the model manifold whose fiber is \(\Gamma_0\) equipped with the metric induced by $g$ on $\Gamma_0$, denoted by \(\bar g\):
\[
(\mathcal M _k ^n (\Gamma_0), g_k) :=(I _k\times \Gamma_0,\,dr^2+s_k(r)^2 \bar g).
\]
This choice fixes the reference geometry for the comparison on \(\mathcal U\).

\paragraph{Step 3. Comparison criteria within the model family.}
Within the model pairs on \(\mathcal M_k^n(\Gamma_0)\) with maximum value \(M\), we single out those whose normalized boundary response matches (or dominates) \(\overline\tau(\mathcal U)\).

\begin{definition}[Comparison pair and associated model pair]\label{def:comparison-pair}
We say that a model pair
\[
(\bar\U ,\bar u) :=(\mathcal{U}_{R,M,k,f}^\diamond ,u_{R,M,k,f}^\diamond)\in {\rm Model}_{M,k,f}^{\diamond}
, \qquad \diamond\in\{-,+\},
\]
in $\mathcal M_k^n(\Gamma_0)$ is a \emph{comparison pair} for \((\mathcal U,u)\) if
\begin{equation}\label{eq:comparison-inequality}
\overline\tau(\mathcal U)\le \overline\tau(\bar\U)=\overline\tau_{M,k,f}^\diamond(R),
\end{equation}
and an \emph{associated model pair} if equality holds. In either case, we call \(\bar u\) a \emph{comparison function} for \(u\) in \(\mathcal U\).
\end{definition}

In Subsection \ref{subsubsec:AboutModelPairs} we discuss existence and non-uniqueness of comparison pairs and associated model pairs, and we record consequences of the \nameref{quote:StandardCondition}.

\paragraph{Step 4. Construction of the comparison tool.}
Fix a comparison pair \((\bar \U,\bar u)\) for \((\mathcal U,u)\). Denote by \(\bar U\) the radial profile generating \(\bar u\), by \(\bar R\) its core radius, and write \(\bar r_\pm:=r_\pm(\bar R,M,k,f)\).
Following \cite[Subsection 5.4]{EMa2}, define \(G:[0,M]\times[\bar r_-,\bar r_+]\to\mathbb R\) by
\[
G(s,r):=s-\bar U(r).
\]
Then \(\partial_r G(s,r)=0\) if and only if \(\bar U'(r)=0\), i.e.\ \(r=\bar R\) (recall Definition \ref{def:admissible-f}). By the Implicit Function Theorem, there exist two \(\mathcal{C}^2\)-functions
\begin{equation}\label{chiFunctions}
\chi^-:[0,M)\to[\bar r_-,\bar R],
\qquad
\chi^+:[0,M)\to[\bar R,\bar r_+],
\end{equation}
such that \(G(s,\chi^\pm(s))=0\) for all \(s\in[0,M)\), i.e.\ \(\bar U(\chi^\pm(s))=s\). Equivalently, \(\chi^\pm\) is the inverse of \(\bar U\) on the corresponding monotonicity interval. We use these inverses to define a pseudo-radial map on \(\mathcal U\), which will allow us to compare the geometry of level sets in Section~\ref{sectionGradient}.

\begin{definition}[Pseudo-radial function]\label{def:pseudo-radial}
Let \((\bar \U,\bar u)\) be a comparison pair for \((\mathcal U,u)\) with sign \(\diamond\in\{-,+\}\).
The \emph{pseudo-radial function} associated to \((\mathcal U,u)\) and \((\bar \U,\bar u)\) is
\[
\Psi: \mathcal U\to \chi ^\diamond (0,M) \subset [\bar r_-,\bar r_+],
\qquad
\Psi(p):=\chi^\diamond(u(p)).
\]
\end{definition}

\begin{remark}\label{rem:psi-basic}
From the definition of the function $\chi^{\diamond}$, it is clear that it extends continuously to $[0,M]$. Furthermore, since \(u\in \mathcal{C}^2(\Omega)\), the function \(\Psi\) is \(\mathcal{C}^2\) on \(\mathcal U\) away from the critical set of \(u\), and \(\Psi\) is Lipschitz on compact subsets of \(\mathcal U\). Moreover, by construction \(\bar U\circ\Psi=u\) on \(\mathcal U\).
\end{remark}

At this point the role of $k$--admissibility becomes clear: it ensures that for each prescribed maximum value $M$ and core radius $R$, the corresponding radial Cauchy solution is positive between its first and last zeros, strictly monotone away from the maximum set, and vanishes at the boundary radii. These properties allow us to invert the profile on each branch and to define the pseudo-radial map \(\Psi\) for a general solution.

\subsubsection{Existence and non-uniqueness of associated model pairs}\label{subsubsec:AboutModelPairs}
We record a few remarks about comparison/associated model pairs and about the admissible range of the parameter $\overline\tau$.

\paragraph{On non-uniqueness.}
An associated model pair in Definition~\ref{def:comparison-pair} need not be unique: if the map \(R\mapsto \overline\tau_{M,k,f}^\diamond(R)\) is not injective, distinct core radii can produce the same $\overline\tau$-value. This does not affect the arguments below, which only require the existence of at least one comparison (or associated) pair.

\paragraph{Existence issues.}
By definition, an associated model pair exists if and only if \(\overline\tau(\mathcal U)\in{\rm Adm}_{M,k,f}\). For a general $k$--admissible $f$ we do not know whether this always holds. Nevertheless, comparison pairs exist under mild lower bounds on $f$, and associated model pairs exist in many cases of interest; see Lemma~\ref{lemma_Adm} below.

As a consequence of Proposition~\ref{prop:behav}, we obtain the following existence statement.

\begin{corollary}\label{coro_existenceComparisonPairs}
	Let $f \in \textup{Lip}_{loc} (\r)$ be a function that satisfies the \nameref{quote:StandardCondition}. Suppose that $\Omega \subset \mathcal{M}$ is a bounded domain and that $u$ solves \eqref{DP}. Then for every $\U \in \pi_0 (\Omega \setminus \textup{Max}(u))$, there exists at least one comparison pair for $(\U,u)$. In the particular case of $k > 0$, for every $\overline{\tau}(\mathcal{U}) > \tau^0_{k,f}(M)$ there is an associated model pair.
\end{corollary}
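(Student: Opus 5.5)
The plan is to reduce the existence question to the definition of comparison pair, namely to find $\diamond\in\{-,+\}$ and $R\in\mathcal R_f=[0,\bar r_k)$ with $\overline\tau(\mathcal U)\le\overline\tau^\diamond_{M,k,f}(R)$, and then read off the answer from Proposition~\ref{prop:behav}. First we check that the objects are well defined. By Proposition~\ref{prop_ODE1}, $f$ is $k$--admissible with $\mathcal I_f=I_f$ and $\mathcal R_f=[0,\bar r_k)$. Since $u>0$ solves \eqref{DP} and $f>0$ is forced on the relevant range, we have $M=u_{\max}\in\mathcal I_f$; this is implicit in Step~1 of the construction and we take it as given there. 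Because $\Omega$ is bounded with $\mathcal C^2$ boundary, $|\nabla u|^2$ is continuous up to $\partial\Omega$. Hence each $\overline\tau(\Gamma)$ is a finite maximum over a compact set, and $\overline\tau(\mathcal U)<+\infty$ is a well-defined nonnegative real number. The model pairs live on $\mathcal M_k^n(\Gamma_0)$, but their $\overline\tau$-values do not depend on the fiber.

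The key step is the first assertion of Proposition~\ref{prop:behav}: $\overline\tau^-_{M,k,f}(R)\to+\infty$ as $R\to0^+$. In particular $\sup_{R\in(0,\bar r_k)}\overline\tau^-_{M,k,f}(R)=+\infty$, so there is some $R_0\in(0,\bar r_k)$ with $\overline\tau^-_{M,k,f}(R_0)\ge\overline\tau(\mathcal U)$. Then $(\mathcal U^-_{R_0,M,k,f},u^-_{R_0,M,k,f})\in{\rm Model}^-_{M,k,f}$ satisfies \eqref{eq:comparison-inequality}, so it is a comparison pair. This argument covers every sign of $k$ at once.

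For the case $k>0$, we need to show that every value $t:=\overline\tau(\mathcal U)>\tau^0_{k,f}(M)$ lies in ${\rm Adm}_{M,k,f}$. As recorded after \eqref{eq:adm-gap}, continuous dependence on initial data makes ${\rm Im}(\overline\tau^\pm_{M,k,f})$ intervals. Proposition~\ref{prop:behav} gives $\tau^+_{k,f}(M)\ge\tau^-_{k,f}(M)$ when $k>0$. Combining the two, the infimum of ${\rm Im}(\overline\tau^+)$ equals $\tau^0_{k,f}(M)$, its supremum is $\tau^+_{k,f}(M)$, and ${\rm Im}(\overline\tau^-)$ is an interval running from $\tau^-_{k,f}(M)\le\tau^+_{k,f}(M)$ up to $+\infty$. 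The union of these two intervals therefore contains $(\tau^0_{k,f}(M),+\infty)$, except possibly for the endpoint value $\tau^+_{k,f}(M)$ when that supremum is not attained and equals $\tau^-_{k,f}(M)$.

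The main obstacle is exactly this endpoint bookkeeping. To close it I would use the symmetry $U_{R}(r)=U_{\bar r_k-R}(\bar r_k-r)$, which gives $\overline\tau^+_{M,k,f}(R)=\overline\tau^-_{M,k,f}(\bar r_k-R)$. Consequently ${\rm Im}(\overline\tau^+)\supseteq{\rm Im}(\overline\tau^-|_{(0,\bar r_k)})$, so the images overlap and no point can be missed. With that, every $t>\tau^0_{k,f}(M)$ is attained by some $\overline\tau^\diamond_{M,k,f}(R)$, which yields an associated model pair with equality in \eqref{eq:comparison-inequality}.
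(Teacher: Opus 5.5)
Your proof is correct and follows essentially the paper's route. The comparison pair comes from the blow-up $\overline\tau^-_{M,k,f}(R)\to+\infty$ as $R\to0^+$ in Proposition~\ref{prop:behav}, and for $k>0$ the associated pair comes from ${\rm Gap}_{M,k,f}=\emptyset$, which the paper also derives from the interval structure of the images together with the reflection symmetry $\overline\tau^+_{M,k,f}(R)=\overline\tau^-_{M,k,f}(\bar r_k-R)$ behind $\tau^+_{k,f}(M)\ge\tau^-_{k,f}(M)$. Two small remarks: combined with the blow-up, that symmetry already makes ${\rm Im}(\overline\tau^+_{M,k,f})$ unbounded above, so your endpoint bookkeeping can be skipped; and $M\in\mathcal I_f$ should simply be taken as the standing assumption of Step~1, since positivity of $f$ does not by itself force it.
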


We will also use the following notion of regular point.

\begin{definition}\label{defin_RegularPoint}
Let $A \subset \mathcal{M}$ and let $l \in \set{1, \dots, n}$. We say that $p \in A$ is an $l$-\textup{dimensional regular point} if $A$ is an $l$-dimensional $\mathcal{C}^1$-submanifold in a neighbourhood of $p$.
\end{definition}

Under assumptions stronger than the \nameref{quote:StandardCondition}, the comparison method becomes effective in the sense that regularity information on $\mathrm{Max}(u)\cap \textup{cl}(\U)$ forces $\overline\tau(\U)$ to lie in the admissible range. This generalizes \cite[Proposition 4.3]{EMa2} (which in turn is a version of \cite[Theorem 3.3]{EMa}).

\begin{lemma}\label{lemma_Adm}
	Let $f\in \mathcal{C}^{m} (\r)$ with $m:= \max\{n-2,3\}$ be a $k-$admissible function with $f(0)>0$ if $k \leq 0$, and assume
	\begin{equation}\label{conditionf}
			f'(x)\ge nk \quad \text{for all } x\in\mathcal{I}_f.
	\end{equation}
	Then:
	\begin{enumerate}
	\item If $\operatorname{Max}(u)\cap \textup{cl}(\U)$ contains an $(n-1)$-dimensional regular point, then $\overline\tau(\mathcal{U}) > \overline \tau^0_{k,f}(M)$.
	\item If $(\mathcal{M},g)=\mathbb{M}^n(k)$, then $\overline\tau (\mathcal U) \notin \mathrm{Gap}_{M,k,f}$.
	\end{enumerate}
	In particular, if $(\mathcal{M},g)=\mathbb{M}^n(k)$ and $\mathrm{Max}(u)\cap \textup{cl} (\U)$ contains a smooth hypersurface, then there exists an associated model pair for $(\mathcal{U},u)$ with positive core radius.
\end{lemma}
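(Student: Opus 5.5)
The plan for item 1 is a contradiction argument that rules out the centered model pair as a comparison pair. Suppose $\overline\tau(\mathcal U)\le \tau^0_{k,f}(M)$. Since $\tau^0_{k,f}(M)=\inf_R\overline\tau^+_{M,k,f}(R)\le \overline\tau^+_{M,k,f}(0)$, the centered pair $(\mathcal U^+_{0,M,k,f},u^+_{0,M,k,f})$ on $\mathcal M^n_k(\Gamma_0)$ is then a comparison pair for $(\mathcal U,u)$. Let $\bar U=U_{0,M,k,f}$ and $\Psi=\chi^+\circ u$. I would consider the $P$-type function
\[
W:=|\nabla u|^2-\bar U'(\Psi)^2 \quad\text{on } \mathcal U .
\]
The Ricci lower bound \eqref{eq:Ric}, Bochner's formula and the hypothesis \eqref{conditionf} should make $W$ satisfy an elliptic inequality of the form $\Delta W + \langle X,\nabla W\rangle\ge 0$ away from critical points. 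I expect this to be exactly the inequality proved in Appendix~\ref{appendixGradient}; the regularity $f\in\mathcal C^m$ is what that computation and the boundary analysis near $\mathrm{Max}(u)$ need. On $\partial\Omega\cap\textup{cl}(\mathcal U)$ we have $W\le \mathsf c(M,k,f)\bigl(\overline\tau(\mathcal U)-\overline\tau^+_{M,k,f}(0)\bigr)\le 0$, and $W\to 0$ at $\mathrm{Max}(u)$. The maximum principle then gives $W\le 0$ in $\mathcal U$, i.e. the pointwise bound $|\nabla u|^2\le \bar U'(\Psi)^2$.

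Next I would Taylor-expand both sides at the $(n-1)$-dimensional regular point $p\in\mathrm{Max}(u)\cap\textup{cl}(\mathcal U)$ to get the contradiction.
\begin{itemize}
\item For $u$: near $p$ the maximum set is a $\mathcal C^1$ hypersurface $\Sigma$ with $\nabla u=0$ on it, so the tangential second derivatives vanish and $\nabla^2u(\nu,\nu)=-f(M)$. Hence, with $d=\mathrm{dist}(\cdot,\Sigma)$, we get $M-u\simeq \tfrac{f(M)}{2}d^2$ and $|\nabla u|^2\simeq f(M)^2d^2$, so $|\nabla u|^2\simeq 2f(M)(M-u)$.
\item For the centered profile: $\bar U''(0)=-f(M)/n$ by \eqref{ODERadial}, so $\bar U'(\Psi)^2\simeq \tfrac{2f(M)}{n}(M-u)$.
\end{itemize}
Since $f(M)>0$ and $n\ge2$, these two expansions contradict $W\le0$ along a normal segment to $\Sigma$ entering $\mathcal U$. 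So the centered pair is never a comparison pair, which gives $\overline\tau(\mathcal U)>\overline\tau^+_{M,k,f}(0)\ge\tau^0_{k,f}(M)$. This is strict, as claimed.

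For item 2, assume $(\mathcal M,g)=\mathbb M^n(k)$ and, by contradiction, $\overline\tau(\mathcal U)\in\mathrm{Gap}_{M,k,f}$. By Proposition~\ref{prop:behav}, the gap can only be nonempty when $k\le0$, and then $\tau^+_{k,f}(M)<\overline\tau(\mathcal U)<\tau^-_{k,f}(M)$. So no $+$ pair is a comparison pair, while every $-$ pair is one, for every core radius $R$. In a space form $\Gamma_0$ is a genuine hypersurface of $\mathbb M^n(k)$, and I would exploit the whole family of bounds $W_R\le0$, $R\in(0,\bar r_k)$, at a point of $\Gamma_0$ where $|\nabla u|$ is maximal. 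There the Hopf lemma applied to $W_R$ gives $\partial_\nu W_R\ge0$. Using the equation on $\partial\Omega$, where $\Delta u=u_{\nu\nu}+H|\nabla u|$, this translates into a one-sided bound on the mean curvature $H$ of $\Gamma_0$ in terms of $(n-1)\cot_k(r_-(R))$, i.e. in terms of the geodesic sphere bounding $\mathcal U^-_{R,M,k,f}$ from inside. Letting $R$ vary, this bound should be incompatible with $\Gamma_0$ being compact and bounding $\Omega$ on the correct side: a compact hypersurface of $\mathbb M^n(k)$ has a point touching a large enclosing geodesic sphere, where $H$ is at least that sphere's curvature. This is the step I expect to be the main obstacle. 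If the curvature route fails, I would first try a continuity argument in $R$ that moves the pair across the gap and contradicts the sign of $W_R$ at the point of $\mathrm{Max}(u)$.

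Finally, the ``in particular'' statement follows by combining the two items. Item 1 gives $\overline\tau(\mathcal U)>\tau^0_{k,f}(M)$, and item 2 gives $\overline\tau(\mathcal U)\notin\mathrm{Gap}_{M,k,f}$. By the definition \eqref{eq:adm-gap} of the gap, this forces $\overline\tau(\mathcal U)\in\mathrm{Adm}_{M,k,f}$, so an associated model pair exists. Its core radius must be positive, since the argument of item 1 shows the centered pair cannot even be a comparison pair.
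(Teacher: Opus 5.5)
Your item~1 is correct, and it takes a genuinely different, more elementary route than the paper. The paper assumes $\overline\tau(\mathcal U)\le\tau^0_{k,f}(M)$, so that \emph{every} pair in $\mathrm{Model}^+_{M,k,f}$ is a comparison pair. It then applies Proposition~\ref{prop_CurvatureMaxSets} for each $R>0$ to get $H(p)\le-\cot_k(R)$ and lets $R\to0^+$. This needs the higher-order expansions \eqref{eq:expansion1}--\eqref{eq:expansion2} and the $\mathcal C^{m+2}$ regularity of $\mathrm{Max}(u)$ near $p$. You use only the centered pair and find a mismatch already at leading order: $|\nabla u|^2/\bar U_0'(\Psi)^2\to n>1$ as one approaches $\Sigma$ from $\mathcal U$, because $\bar U_0''(0)=-f(M)/n$ while $\nabla^2u=-f(M)\,\nu\otimes\nu$ on $\Sigma$. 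This only requires $u\in\mathcal C^2$ and $\Sigma\in\mathcal C^1$. It also gives the slightly stronger $\overline\tau(\mathcal U)>1=\overline\tau^+_{M,k,f}(0)$, which justifies the positivity of the core radius in the final assertion. Rather than asserting an inequality for the unweighted difference, simply invoke Theorem~\ref{theo:Gradient}, which applies to every comparison pair, including the centered one.

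Item~2, however, is not proved. If $\overline\tau(\mathcal U)\in\mathrm{Gap}_{M,k,f}$, then $\overline\tau(\mathcal U)\le\tau^-_{k,f}(M)$ and $\overline\tau(\mathcal U)\notin\mathrm{Im}(\overline\tau^-_{M,k,f})$. Hence $\overline\tau^-_{M,k,f}(R)>\overline\tau(\mathcal U)$ for every $R$, and at the point $p\in\Gamma_0$ where $|\nabla u|$ is maximal you have $W(p)<\bar W_R(p)$ \emph{strictly}. So $F_\beta$ (or your $W_R$) is strictly negative at $p$ while its maximum $0$ is attained on $\mathrm{Max}(u)$; $p$ is in general not even a local maximum, and Hopf's lemma gives no sign for $\partial_\nu W_R(p)$. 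Equivalently, in the expansions behind Proposition~\ref{prop_curvatureZeroSets} the zeroth-order terms already differ, so the first-order terms, which carry $H(p)$, give no information. This is exactly why that proposition needs an \emph{associated} model pair, and the gap hypothesis says precisely that none exists. Even a bound at $p$ would not combine with an enclosing geodesic sphere, which controls $H$ only at its own touching point. The fallback ``continuity argument in $R$'' is not an argument.

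Note also that your scheme lives entirely on the abstract warped product $\mathcal M^n_k(\Gamma_0)$ through $\Psi$, so it never uses $(\mathcal M,g)=\mathbb M^n(k)$. The paper's route is of a different nature. In a space form the profiles $U_{R,M,k,f}\circ r_p$, centered at any point $p$, are genuine solutions on the ambient manifold. After a Jordan--Brouwer splitting according to whether $\mathcal U$ lies in the bounded or unbounded component of $\mathbb M^n(k)\setminus\Gamma$, it slides the $+$ (resp.\ $-$) annuli in $R$ until first contact with $u$. It then appeals to the strong maximum principle and the ordering $\overline\tau^+_{M,k,f}(R)<\overline\tau(\mathcal U)<\overline\tau^-_{M,k,f}(R)$. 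That argument is only outlined in the paper: at a boundary contact point this ordering is compatible with, rather than contrary to, the one-sided inequality between $u$ and $u_{R,M,k,f}$, so that step needs more than is written. Still, it at least uses the space-form hypothesis, which your proposal never does. As it stands, item~2, and with it the existence part of the final assertion, is not established by your proposal.
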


We postpone the proof of this result to Section~\ref{subsec_Curvature}, where \eqref{conditionf} will also be crucial for obtaining the gradient estimate away from $\mathrm{Max}(u)$.

To clarify how the hypothesis packages on $f$ feed into the properties of the $\overline\tau$-functions and the existence of comparison/associated model pairs, Figure~\ref{fig:conditionsdependency} summarizes the logical dependencies.
\begin{figure}[th]
\centering
\begin{adjustbox}{width=0.65\linewidth}

\begin{tikzpicture}[
  font=\small,
  >=Latex,
  xgap/.store in=\xgap, xgap=2.8cm,
  ygap/.store in=\ygap, ygap=1.4cm,
  box/.style={draw, rounded corners=3pt, align=center, inner sep=5pt,
              minimum width=2.6cm, minimum height=0.95cm},
  orange/.style={box, very thick, draw=orange!85!black, fill=orange!12},
  blue/.style={box, very thick, draw=blue!75!black, fill=blue!10},
  purple/.style={box, very thick, draw=violet!70!black, fill=violet!10},
  green/.style={box, very thick, draw=green!60!black, fill=green!10},
  red/.style={box, very thick, draw=red!75!black, fill=red!10},
  req/.style={->, thick},
  suff/.style={->, thick, dashed},
  node distance=12mm and 16mm
]

\node[purple] (Std) at (0,0) { \nameref{quote:StandardCondition}};

\node[blue, left=8mm of Std]     (Kad)     {$k$-admissible\\ (Definition \ref{def:admissible-f})};
\node[blue, above=8mm of Kad]     (GradAss) {$f'(x)\ge nk$\\\eqref{conditionf}};
\node[blue, below=8mm of Kad] (TauCond) {$f(x)\ge x f'(x)$\\ \eqref{conditionf2}};
\draw[suff] (Std) -- (Kad);


\node[orange, right=16mm of Std] (P25) {Blow-up and ordering of $\overline \tau$\\(Proposition \ref{prop:behav})};
\node[green, above=4mm of P25] (L214)  {Existence of \\ an associated model pair\\(Lemma \ref{lemma_Adm})};
\node[green, below=4mm of P25] (C212) {Existence of comparison \\ pairs (Corollary \ref{coro_existenceComparisonPairs})};
\node[orange, below=4mm of C212] (P26) {Monotonicity and\\ normalization of $\overline \tau$\\(Proposition \ref{prop:ODE2})};


\draw[req] (Std) -- (P25);
\draw[req] (Std) -- (C212);
\draw[req] (Std) to[bend right=22] (P26);
\draw[req] (TauCond) -- (P26);

\draw[req] (Kad) to[bend left=12] (L214);
\draw[req] (GradAss) -- (L214);

\end{tikzpicture}

\end{adjustbox}
\caption{  The diagram summarizes the dependencies between the results previously stated and the conditions over the non-linearity. Arrows indicate requirements (the arrows point to the conditions necessary to obtain that result); dashed arrows indicate sufficient conditions. Orange boxes correspond to properties of the $\overline\tau$-functions, green boxes to existence statements for comparison/model pairs, and purple/blue boxes to hypothesis on $f$.}
\label{fig:conditionsdependency}
\end{figure}
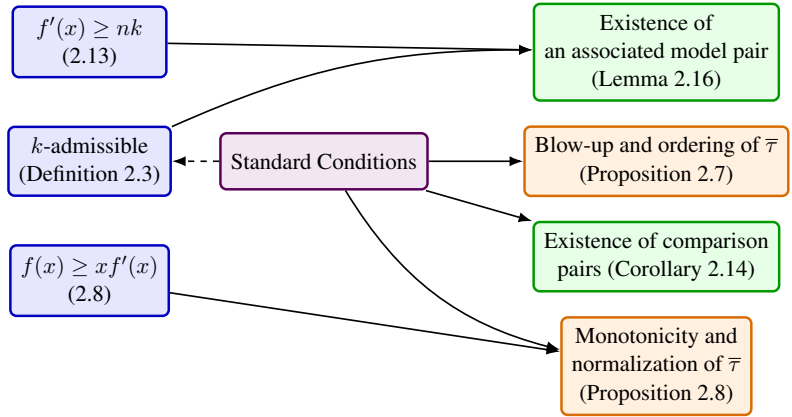 

\subsection{Comparison results}\label{subsec:ComRes}
In this section we are going to state the main comparison results. 
Let $(\Omega,u)$ solve the semilinear Dirichlet problem~\eqref{DP} on an $n$-dimensional Riemannian manifold $(\mathcal M,g)$ satisfying the Ricci lower bound \eqref{eq:Ric}, and let $f\in\mathrm{Lip}_{\mathrm{loc}}(\mathbb R)$ be $k$--admissible in the sense of Definition~\ref{def:admissible-f}. Fix a component 
\[
\mathcal U\in \pi_0(\Omega\setminus \mathrm{Max}(u))
\quad\text{with}\quad
M:=u_{\max}\in\mathcal I_f,
\]
and fix the model manifold constructed in Subsection~\ref{subsec:ComMeth}, namely $(\mathcal M _k ^n (\Gamma_0),g_k)$. Let $(\bar\U,\bar u)$ be a comparison pair for $(\mathcal U,u)$ in the sense of Definition~\ref{def:comparison-pair}. Denote by $\bar U$ the radial profile generating $\bar u$, by $\bar R$ its core radius, and set $\bar r_{\pm}:=r_{\pm}(\bar R,M,k,f)$. Let $\Psi$ be the associated pseudo-radial function (Definition~\ref{def:pseudo-radial}), so that $u=\bar U\circ\Psi$ on $\mathcal U$. Throughout this subsection we use the shorthand
\begin{equation}\label{definW}
W:=|\nabla u|^2 \quad\text{and}\quad \bar W:=\bar U'(\Psi)^2 \quad \text{on }\mathcal U.
\end{equation}

The main input is a pointwise gradient comparison (Theorem~\ref{theo:Gradient}). From it we derive geometric bounds for the boundary and top level sets, an isoperimetric-type inequality (Theorem~\ref{theo:Isoperimetric}), and a quantitative hot-spots estimate (Theorem~\ref{theo:HotSpots}). Figure~\ref{fig:reqsec3} summarizes the dependency structure among the hypothesis packages on $f$ and the resulting statements.

\begin{figure}[th]
\centering
\begin{adjustbox}{width=0.65\linewidth}
\begin{tikzpicture}[
  font=\small,
  >=Latex,
  xgap/.store in=\xgap, xgap=2.8cm,
  ygap/.store in=\ygap, ygap=1.4cm,
  box/.style={draw, rounded corners=3pt, align=center, inner sep=5pt,
              minimum width=2.6cm, minimum height=0.95cm},
  orange/.style={box, very thick, draw=orange!85!black, fill=orange!12},
  blue/.style={box, very thick, draw=blue!75!black, fill=blue!10},
  purple/.style={box, very thick, draw=violet!70!black, fill=violet!10},
  green/.style={box, very thick, draw=green!60!black, fill=green!10},
  red/.style={box, very thick, draw=red!75!black, fill=red!10},
  req/.style={->, thick},
  suff/.style={->, thick, dashed},
  node distance=12mm and 16mm
]


\node[blue]     (Kad)   at (0,0)   {$k$-admissible};
\node[blue, above=8mm of Kad]     (GradAss) {$f'(x)\ge nk$\\\eqref{conditionf}};
\node[blue, above=8mm of GradAss]     (CM) {$f\in \mathcal{C}^m$\\ $m = \max\{n-2,3\}$};
\node[blue, below=8mm of Kad] (Pinch)   {$f(x)\le nkx+1$};

\node[red, right=15mm of Kad] (GradEst) {Gradient estimates\\ (Theorem \ref{theo:Gradient})};
\node[red, above=8mm of GradEst] (CurvTop) {Bound for the\\Mean curv.\\ of top level set\\ (Proposition \ref{prop_CurvatureMaxSets})};

\node[red, below=4mm of GradEst]  (Hot) {Location of hot-spots\\ (Theorem \ref{theo:HotSpots})};

\node[red, right=8mm of CurvTop] (BoundArea) {Bound for the\\area of top\\level set\\ (Proposition \ref{prop_AreaLevelSet})};

\node[red, below=8mm of BoundArea] (Iso) {Isoperimetric type\\ inequality\\ (Theorem \ref{theo:Isoperimetric})};

\node[green, below=4mm of Iso] (MeanCurv) {Bound for the\\Mean curv.\\of zero level set\\ (Proposition \ref{prop_curvatureZeroSets})};


\draw[req] (Kad) -- (GradEst);
\draw[req] (GradAss) -- (GradEst);
\draw[req] (GradEst) to[bend left=10]  (MeanCurv) ;

\draw[req] (GradEst) -- (Hot);
\draw[req] (Pinch) --(Hot);

\draw[req] (GradEst) --  (CurvTop);
\draw[req] (CM) -- (CurvTop);

\draw[req] (GradEst) --  (BoundArea);
\draw[req] (CM) to[bend left=15] (BoundArea);
\draw[req]  (BoundArea) -- (Iso);

\end{tikzpicture}

\end{adjustbox}

\caption{The diagram summarizes the dependencies among the hypotheses on $f$, the theorems, and the resulting geometric bounds.  Arrows indicate requirements (the arrows point to the conditions necessary to obtain that result). On blue we represent the condition over $f$, on red we represent the results that we need a comparison pair and in green the ones that we need an associated model pair.}
\label{fig:reqsec3}
\end{figure}
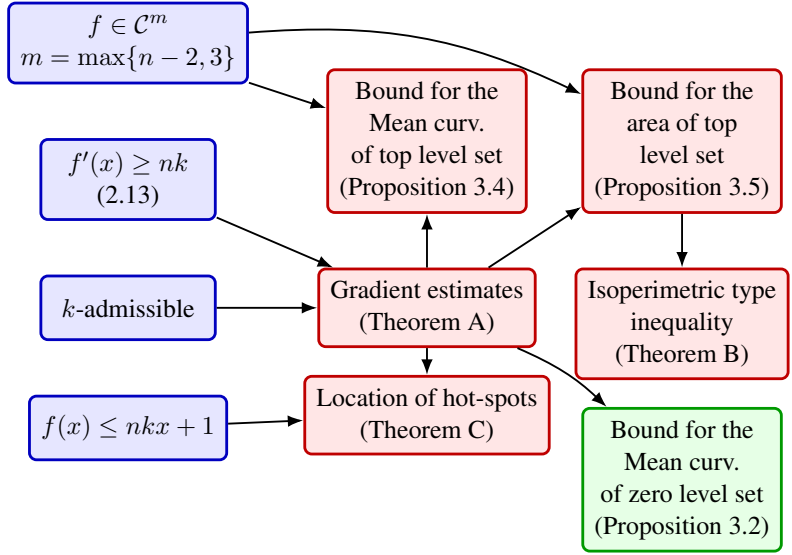

\begin{lettertheorem}[Gradient estimates] \label{theo:Gradient}
Let $f\in \mathcal C^{1}(\mathbb R)$ be a $k-$admissible function 
satisfying condition \eqref{conditionf}:
\begin{equation*}
f'(x)\ge nk \quad \text{for all } x\in\mathcal I_{f}.
\end{equation*}

 Then, for any comparison pair 
$(\bar{\U},\bar u)$ in the sense of Definition \ref{def:comparison-pair}, we have
\[
W(p)\le \bar W(p) \quad \text{for all } p\in\mathcal U.
\]
Furthermore, if equality holds at any point $p\in\mathcal U$, 
then $\mathrm{Ric}(\nabla u,\nabla u) = (n-1)k\,|\nabla u|^{2}$ on $\mathcal U$, 
$(\mathcal{M}^n,g)$ is isometric to $(\mathcal{M}^n_k(\Gamma_0),g_k)$ in $\U$ and moreover, $(\mathcal U,u)$ is congruent to $(\bar{\U},\bar u)$.
\end{lettertheorem}

\begin{remark}
If \( (\mathcal{U}, u) \equiv (\bar \U, \bar u) \), then the level sets of \(u\) in \( \mathcal{U} \) are compact, embedded totally umbilical hypersurfaces, hence strongly constrain the geometry of \( \mathcal{U} \). In particular, if \( (\mathcal{M}, g) = \mathbb{M}^n(k) \) is a space form, then the classification of totally umbilical hypersurfaces forces \( \Gamma = \mathbb{S}^{n-1} \) and \( g_0 = g_{\mathbb{S}^{n-1}} \). Consequently, \( \mathcal{U} \) is either an annular or a metric ball domain and \(u\) is radially symmetric.
\end{remark}

The question of optimal assumptions on $f$ is discussed in Subsection~\ref{ConditionsThmA}.

As consequences of Theorem~\ref{theo:Gradient}, we obtain curvature bounds for $\partial\Omega$ (Proposition~\ref{prop_curvatureZeroSets}) and for the top level set (Proposition~\ref{prop_CurvatureMaxSets}), as well as an area bound for the top level set (Proposition~\ref{prop_AreaLevelSet}). In higher dimensions, these curvature estimates are naturally weaker than in the surface case: since the PDE controls $\Delta u$ the comparison yields effective control of the \emph{mean} curvature of level sets but not of the full principal curvature spectrum when $n\geq 3$ (see \cite{ABBM}).

Combining the area bound with the coarea formula yields the following isoperimetric-type inequality. For any $s>0$, denote by $\mathcal{H}^s$ the $s$-dimensional Haussdorff meausure associated to the metric $g$ of $\mathcal{M}$, and by $\mathcal{H}_k^s$ that associated to the metric $g_k$ of the model manifold $\mathcal{M}_k^n(\Gamma_0)$.

The proof of this result is given in Section~\ref{sec:IsoIneq}.

\begin{lettertheorem}[Isoperimetric inequality]
\label{theo:Isoperimetric}
		Let $f\in \mathcal C^{m}(\mathbb R)$, with $m = \max\{n-2,3\}$, be a $k-$admissible function satisfying  the condition \eqref{conditionf}. Assume that the $(n-1)$-dimensional
		part of $\textup{cl}(\U) \cap \textup{Max}(u)$, denoted by  $\Gamma_{M}$, consists  of a (possibly disconnected) 
		Lipschitz-continuous hypersurface.  Hence, for any comparison pair $ (\bar \U, \bar u)$,
		\begin{equation}\label{volumeBound}
\frac{\mathcal{H}^n (\U)}{\mathcal{H}^{n-1} (\Gamma_{M})} \geq \left\{ \begin{matrix}
\frac{\mathcal{H}_k^n (\mathcal{U}^+_{\bar R, M,k,f})}{\mathcal{H}_k^{n-1} (\Gamma_{\bar R, M,k,f})}
& \text{ if } & \bar \U \in \textup{Model}_{M,k,f}^+ \\[6mm]
\frac{\mathcal{H}_k^n (\mathcal{U}^-_{\bar R, M,k,f})}{\mathcal{H}_k^{n-1} (\Gamma_{\bar R, M,k,f})}  
& \text{ if } & \bar \U \in \textup{Model}_{M,k,f}^-.
\end{matrix}\right.
		\end{equation}
	 Furthermore, equality holds in \eqref{volumeBound} if, and only if $(\U,u) \equiv (\bar \U, \bar u)$.
\end{lettertheorem}

We conclude with a quantitative localization estimate for the maximum set (hot spots), extending the sharp bounds obtained for Serrin-type equations in \cite{MP,ABM}. The proof is given in Section~\ref{sec:hotspots}.

\begin{lettertheorem}[Location of the hot-spots]
\label{theo:HotSpots}
Let $f \in \mathcal{C}^1 (\r)$  be a $k-$admissible function satisfying  the condition \eqref{conditionf} and 
\begin{equation*}
f(x) \leq f_k(x)=nkx+1 \qquad
\end{equation*}
Assume that $\Omega$ is contained in a convex domain of $\mathcal M$ and that $\overline{\tau} (\Omega) = \overline{\tau} (\U)$.  
If $k>0$ suppose that $\overline r_+\leq \bar r_k/2$.
  Let $(\bar \U,\bar u)$ be an comparison pair to $(\mathcal{U},u)$ with  core radius $\bar R$.
Then,

\begin{equation}\label{ineqHotSpotsNormalized}
\frac{\mathrm{dist}(p,\partial\Omega)}
     {\tan_{k}(r_{\Omega})/n}
\begin{cases}
\geq \displaystyle \frac{\bar r_{+}-\bar R}
         {\sqrt{\frac{2}{n}M + k\,M^{2}}}
& \text{if } (\bar \U, \bar u) \in {\rm Model}_{M,k,f}^+, \\[6mm]
 >\displaystyle \frac{\bar R-\bar r_-}
         {\sqrt{\frac{2}{n}M + k\,M^{2}}}
& \text{if } (\bar \U, \bar u) \in {\rm Model}_{M,k,f}^-,
\end{cases}
\qquad \text{for all } p\in\operatorname{Max}(u).
\end{equation}
where $r_{\Omega} = \max \left\{ \textup{dist} (p, \partial \Omega) ~\colon~ p \in \Omega \right\}$.
 Moreover, if equality holds at a point of $\Omega$, then 
$(\mathcal M,g)$ is isometric to the space form $\mathbb M^{n}(k)$, 
$f(x)=nkx+1$, and $(\Omega,u)$ is a ball solution of Serrin's problem.
\end{lettertheorem}

The additional hypotheses in Theorem~\ref{theo:HotSpots} have distinct roles. The upper bound $f(x)\le nkx+1$ ensures that the Serrin's ball profile provides a subsolution, yielding a lower bound for $u$ in terms of the distance to $\partial\Omega$ by the maximum principle. Condition \eqref{conditionf} (i.e.\ $f'(x)\ge nk$) is used for the gradient comparison in Theorem~\ref{theo:Gradient}.  Moreover, the condition over $\mathcal{U}$, $\overline{\tau}(\mathcal{U})= \overline{\tau}(\Omega)$ is to be able to apply Lemma \ref{lemma_distanceToBoundary} to obtain a lower bound for $u(p)$.

\section{Proof of the Comparison Results}\label{sectionGradient}

Let \((\mathcal M^n,g)\) be a complete Riemannian manifold with the lower Ricci bound \eqref{eq:Ric}, and let \(f\in{\rm Lip}_{loc}(\mathbb R)\) be $k$-admissible. Fix a solution $(\Omega,u)$ of \eqref{DP} and choose a connected component
\[
\mathcal U\in \pi_0(\Omega\setminus \mathrm{Max}(u))
\qquad\text{with}\qquad
\overline{\tau}(\mathcal{U})= \overline{\tau}(\Omega).
\]
In this section we prove the gradient comparison (Theorem~\ref{theo:Gradient}) and then derive the geometric estimates stated in Section~\ref{sec:ComMethRes}. Technical computations are deferred to Appendix~\ref{appendixGradient}.

\begin{proof}[Proof of Theorem \ref{theo:Gradient}]
By Lemma \ref{lemma_ineq} (Appendix~\ref{appendixGradient}), the function
\[
F_\beta = \left(\frac{s_k (\Psi)}{\bar U ' (\Psi)}\right)^{2 \frac{n-1}{n}} (W-\bar W)
\]
where $s_k$ is given by \eqref{eq:sk} and $F_\beta$ satisfies the elliptic inequality
\[
\Delta F_\beta - 2 \frac{n-1}{n} \lambda (\Psi) \pscalar{\nabla F_\beta}{\nabla u}- 2 \frac{n-1}{n}\frac{\abs{\nabla u}^2}{\bar U ' (\Psi)^2} \mu (\Psi) F_\beta \geq 0,
\]
with $\mu$ being non-negative in $\mathcal U$. Moreover, $F_\beta$ extends continuously to $\textup{cl}(\mathcal U)$ and satisfies $ F_\beta \le 0 $ on $\partial\mathcal U$. Indeed, on $\partial\Omega\cap \textup{cl}(\mathcal U)$ we have $u=0$, hence $\Psi$ is constant and $\bar W$ equals the corresponding model boundary gradient; the comparison-pair condition \eqref{eq:comparison-inequality} implies $W\le \bar W$ there, and therefore $F_\beta\le 0$ on $\partial\Omega\cap \textup{cl}(\mathcal U)$. On the remaining portion $\textup{cl}(\mathcal U)\cap \mathrm{Max}(u)$, one has $W=0$ and $\bar W=0$; in addition, the weight in the definition of $F_\beta$ is chosen precisely so that $F_\beta$ has a finite continuous trace and vanishes on this set (see the proof of Lemma~\ref{lemma_ineq}).

Applying the maximum principle, we get that $W\le \bar W$ on $\mathcal U$. If equality holds at one point, then the strong maximum principle forces $F_\beta\equiv 0$, and therefore $W = \bar W$ on $\mathcal U$. Inspecting the inequality in Lemma~\ref{lemma_ineq} then yields
\[
\mathrm{Ric}(\nabla u,\nabla u)=(n-1)k|\nabla u|^2 \, \text{ on }\mathcal U.
\]
Moreover, by the proof of Lemma~\ref{lemma_ineq} one also obtains the identity at each level set:
\[
\nabla^2 u = -\lambda(\Psi)\,du\otimes du + \frac{1}{n}\Bigl(\lambda(\Psi)|\nabla u|^2 -f(u)\Bigr) g_0,
\]
where $\lambda$ is given by \eqref{eq:deflambda} and $g_0$ is the induced metric on $\Gamma_0$ (recall Definition \ref{def:comparison-pair}). Hence, Brinkmann’s Theorem (see \cite[Theorem 4.3.3]{Pe}) shows that the metric on $\U$ must then split as
\[
g = d\Psi^2 + s_k(\Psi)^2 \bar g \quad \text{on } \U,
\]
so $(\U,u)$ is congruent to the model pair $(\bar\U,\bar u)$, as claimed.
\end{proof}

\begin{remark}\label{remark_Gradient}
If \( \overline\tau(\mathcal{U}) =\overline\tau (\bar \U)= 1 \), extending \( \Psi \) to \( \operatorname{Max}(u) \cap \operatorname{cl}(\mathcal{U}) \), we obtain
\[
\nabla^2 S_k(\Psi) = s_k'(\Psi)\, g,
\]
with \( S_k \circ \Psi = 1 \) and \( \nabla (S_k \circ \Psi) = 0 \) on this set. It then follows again from \cite[Theorem 4.3.3]{Pe} that \( g_0 = g_{\mathbb{S}^{n-1}} \) and \( \Gamma = \mathbb{S}^n \). This, in turn, implies that
\[
\operatorname{Max}(u) = \{o\}, \qquad (\mathcal{M}, g) = \mathbb{M}^n(k),
\]
 \( \Omega \) is a ball, and \( u \) is radially symmetric with respect to \( o \).
\end{remark}

\subsubsection*{About the conditions of Theorem  \ref{theo:Gradient}}\label{ConditionsThmA}

As mentioned in Subsection \ref{subsec:ComRes}, we do not claim that the hypotheses in Theorem~\ref{theo:Gradient} are optimal. For instance, the torsional rigidity problem on $\mathbb{S}^2$ treated in \cite{EMa2} does not satisfy these assumptions, yet gradient estimates can still be obtained in that setting.

On $\mathbb{S}^3$ with the round metric, one can also write explicit radial solutions for affine Helmholtz nonlinearities $f(x)=\lambda x+\beta$, with $\lambda \in (-1,0)\cup(0,\pi)$ (excluding singular cases) and $\beta \in \mathbb{R}^+$ chosen so that $f$ is $k$-admissible:
\begin{equation*}
u(r)=\frac{\csc (r) \sin (R) (\lambda M+\beta) \cos \left(\sqrt{\lambda+1} (r-R)\right)+\frac{\csc (r) \cos (R) (\lambda M+\beta) \sin \left(\sqrt{\lambda +1} (r-R)\right)}{\sqrt{\lambda +1}}-\beta}{\lambda}.
\end{equation*}
For these explicit solutions, we observe that for certain initial data and suitable choices of $(\lambda,\beta)$ the quantity $\mu(r)$ in Lemma \ref{lemma_ineq} is positive, as in the right panel of Figure \ref{fig:expThmA}. Moreover, there are parameter ranges $(\lambda,\beta)$ for which $\mu$ in Lemma \ref{lemma_ineq} remains positive independently of the initial data, as illustrated in the left panel of Figure \ref{fig:expThmA}. However, in other cases, such as Allen--Cahn-type nonlinearities, one finds that if $\bar U(r)$ is a solution on $\mathbb{S}^n$ for $n\geq 2$, then at the boundary radii $\bar r_\pm$ one has $\mu(\bar r_\pm)=1-nk<0$, independently of the initial conditions.

\begin{figure}
\centering
\includegraphics[
    width=0.8\textwidth,
    clip
]{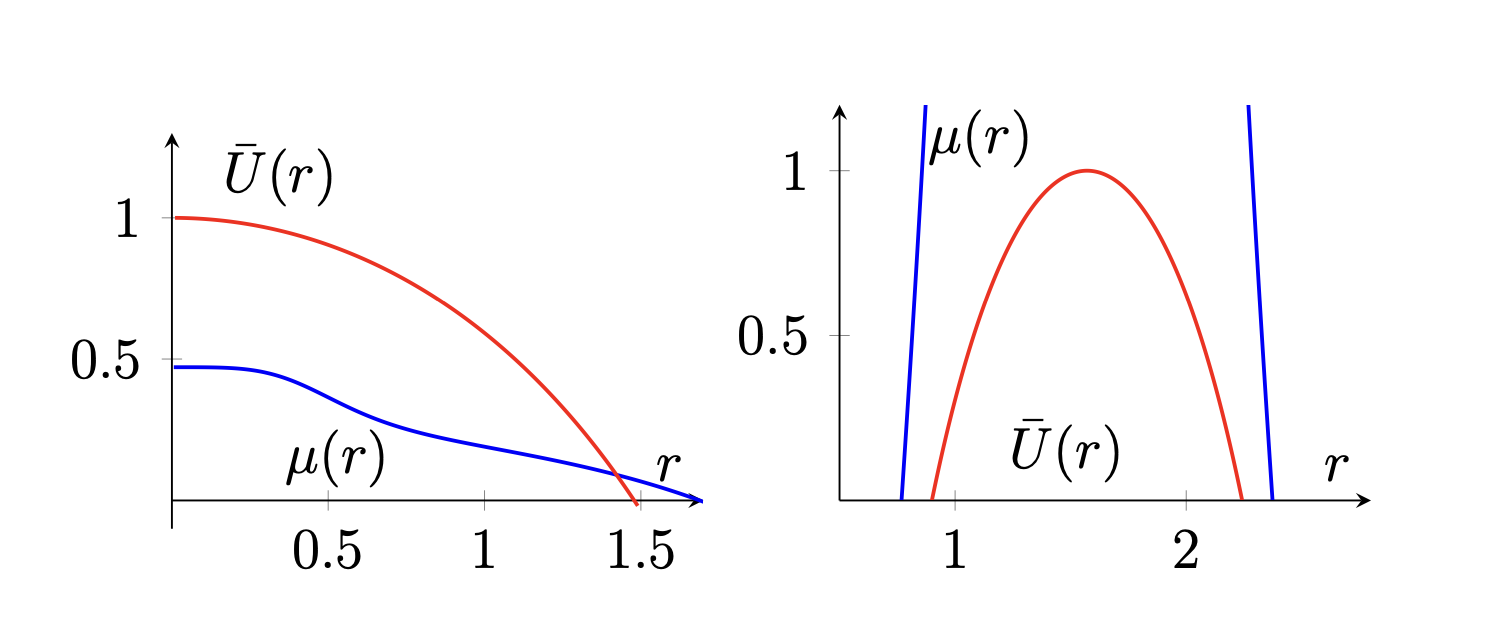}

\caption{Comparison of the radial profile $\bar U$ (red) and the function $\mu(r)$ defined in \eqref{eq:defmu} (blue) for affine Helmholtz equations on the three-dimensional sphere that do not satisfy condition \eqref{conditionf2}.
\emph{Left:} Let $f(x)= \frac{-1}{4}x +\frac{5}{2}$ and consider the radial solution with maximum $1$. For this $f$ we compute numerically that $\mu(r)$ is positive independently of the initial conditions. \emph{Right:} Let $f(x)= x +2.9$ and consider the solution with maximum $1$ at radius $\frac{\pi}{2}$. For this affine term we verified (with Sage) that $\mu(r)$ is positive only for certain initial conditions. }\label{fig:expThmA}
\end{figure}

\subsection{Curvature estimates}\label{subsec_Curvature}

In this subsection we extend the curvature estimates in 
\cite[Subsection~4.2]{EMa2} to higher dimensions and prove 
Lemma~\ref{lemma_Adm}. Throughout, we work under the assumptions of 
Theorem~\ref{theo:Gradient}.

\begin{proposition}\label{prop_curvatureZeroSets} 
Let $f$ be a $k$-admissible function satisfying the conditions \eqref{conditionf2} and \eqref{conditionf}.  Let $(\bar \U, \bar u)$ be an associated model pair to $(\U, u)$ in the sense of Definition \ref{def:comparison-pair}, and consider $p \in \partial \Omega$ such that
	\begin{equation}\label{condNabla}
		\abs{\nabla u}^2 (p) = \max_{x\in \partial \Omega \cap \textup{cl}(\U)} \abs{\nabla u}^2(x).
	\end{equation}
	Let $\bar R \in \mathcal{R}_f$ be the core radius of $(\bar \U, \bar u)$. Then, it holds
	\begin{equation*}
		H (p) \leq \cot_k (\bar r_+) \quad \textup{if } (\bar \U, \bar u) \in {\rm Model}_{M,k,f}^+ \quad \textit{and} \quad H (p) \leq -\cot_k (\bar r_-) \quad \textup{if } (\bar \U, \bar u) \in {\rm Model}_{M,k,f}^-,
	\end{equation*}
	where $\bar{r}_- $ and $\bar{r}_+$ are defined as the zeros of the solution
	 to \eqref{ODERadial}-\eqref{CauchyData} defining $(\bar \U, \bar u)$ and $H(p)$ is computed with respect to the inner orientation to $\U$.

\end{proposition}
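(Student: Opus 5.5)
The plan is to run the standard boundary-maximum argument for $W-\bar W$ at the point $p$, using Theorem~\ref{theo:Gradient} as the interior inequality. Since $(\bar\U,\bar u)$ is an \emph{associated} model pair, we have $\overline\tau(\U)=\overline\tau(\bar\U)$. By \eqref{condNabla} and Definition~\ref{def:tau-U}, this means $W(p)=\bar U'(\bar r_\pm)^2=\bar W(p)$, with $\Psi(p)=\bar r_\pm$ because $u(p)=0$. So the function $W-\bar W$, which is $\le 0$ on $\U$ by Theorem~\ref{theo:Gradient}, attains its maximum value $0$ at the boundary point $p$. Equivalently, one can use the weighted function $F_\beta$ from the proof of Theorem~\ref{theo:Gradient}, since the weight is positive near $\partial\Omega$. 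It follows that the inner normal derivative satisfies $\partial_\nu (W-\bar W)(p)\le 0$, where $\nu=\nabla u/|\nabla u|$ at $p$ is the inner unit normal.

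Next I compute both normal derivatives at $p$.
\begin{itemize}
\item For $W$: on $\partial\Omega$ we have $u=0$, so the equation gives $\Delta u = \nabla^2u(\nu,\nu) + H\,\partial_\nu u$ on the level set, with $H$ the mean curvature (sum of principal curvatures, or normalized according to the paper's convention) with respect to $\nu$. Hence $\nabla^2u(\nu,\nu)=-f(0)-H|\nabla u|$, and therefore $\partial_\nu W = 2|\nabla u|\,\nabla^2u(\nu,\nu) = -2|\nabla u|\bigl(f(0)+H|\nabla u|\bigr)$.
\item For $\bar W=\bar U'(\Psi)^2$: we have $\partial_\nu \bar W = 2\bar U'\bar U''(\Psi)\,\partial_\nu\Psi$ and $\partial_\nu\Psi = |\nabla u|/\bar U'(\Psi)$ (sign consistent on each branch). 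This gives $\partial_\nu\bar W = 2|\nabla u|\,\bar U''(\bar r_\pm)\,\mathrm{sgn}$. Using \eqref{ODERadial} at $\bar r_\pm$, $\bar U''=-(n-1)\cot_k(\bar r_\pm)\bar U'-f(0)$.
\end{itemize}
Substituting, and using $|\bar U'(\bar r_\pm)|=|\nabla u|(p)$, the $f(0)$ terms cancel. The inequality $\partial_\nu(W-\bar W)\le 0$ then reduces to $H(p)\ge -(n-1)\cot_k(\bar r_+)$ or its analogue. After fixing orientation (for the $+$ branch $\bar U'<0$, and $\nu$ points toward decreasing $r$), this becomes exactly $H(p)\le\cot_k(\bar r_+)$ in the normalized-mean-curvature/orientation convention of the statement. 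The $-$ branch works the same way and gives $-\cot_k(\bar r_-)$. I would carry out this sign bookkeeping carefully by checking it on the model itself, where equality must hold: the sphere $\{r=\bar r_+\}$ has mean curvature $\cot_k(\bar r_+)$ with respect to the inward normal (pointing to decreasing $r$, normalized by $n-1$).

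The main obstacle is justifying the normal-derivative inequality and the regularity at the boundary. $W-\bar W$ is $C^1$ up to $\partial\Omega$ because $u\in C^2$ up to the $C^2$ boundary, possibly after a standard approximation, and $\Psi$ is $C^2$ near $\partial\Omega$ since $|\nabla u|>0$ there by Hopf. A maximum attained at a boundary point only needs one-sided differentiability, so no Hopf lemma is required for the non-strict inequality. Conditions \eqref{conditionf} and \eqref{conditionf2} enter only through Theorem~\ref{theo:Gradient} and through the existence and monotonicity structure of the associated pair (Proposition~\ref{prop:ODE2}), which guarantees the correct branch $\diamond$.
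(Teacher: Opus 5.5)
Your argument is correct and is essentially the paper's proof. The paper expands $W$ and $\bar W$ to first order along the inward normal geodesic at $p$, uses $W(p)=\bar W(p)$ (from the associated-pair condition together with \eqref{condNabla}), and compares the first-order coefficients via Theorem~\ref{theo:Gradient}; this is exactly your inequality $\partial_\nu(W-\bar W)(p)\le 0$. One small cleanup: the chain rule gives $\partial_\nu\bar W=2|\nabla u|\,\bar U''(\bar r_\pm)$ with no extra sign factor, and substituting the ODE yields the first-order coefficient $2\bigl(\pm(n-1)\cot_k(\bar r_\pm)\sqrt{\bar W}-f(0)\bigr)\sqrt{\bar W}$, which produces exactly the stated bounds.
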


\begin{proof}
Let $p \in \partial \Omega \cap \textup{cl}(\U)$ satisfy \eqref{condNabla}, and let $\gamma:[0,\varepsilon)\to\mathcal M$ be the unit-speed geodesic with $\gamma(0)=p$ and $\gamma'(0)=\nabla u/\abs{\nabla u}(p)$. As in \cite[Proposition 5.1]{EMa2}, for $t$ small one has
\[
W(\gamma(t)) = W(p) + 2\Big((n-1)\sqrt{W}(p)\,H(p)-f(0)\Big)\sqrt{W}(p)\,t + O(t^2),
\]
and
\[
\bar W(\gamma(t)) = \bar W(p) + 2\Big(\mp (n-1)\cot_k(\bar r_\pm)\sqrt{\bar W}(p)-f(0)\Big)\sqrt{\bar W}(p)\,t + O(t^2),
\]
where we take $\bar r_+$ when $(\bar \U,\bar u)\in {\rm Model}_{M,k,f}^+$ and $\bar r_-$ otherwise. Since \eqref{condNabla} implies $W(p)=\bar W(p)$, the claim follows by comparing the first-order terms and using Theorem~\ref{theo:Gradient}.
\end{proof}

\begin{remark}
The conclusion of Proposition~\ref{prop_curvatureZeroSets} requires an associated model pair: only in this case do we have $W(p)=\bar W(p)$ at a boundary point satisfying \eqref{condNabla}.
\end{remark}

\begin{proposition}\label{prop_CurvatureMaxSets}
Let $f\in \mathcal{C}^{m} (\r)$ with $m:= \max\{n-2,3\}$ be a $k$-admissible function satisfying \eqref{conditionf}. Let $(\bar \U, \bar u)$ be a comparison pair with core radius $\bar R >0$, and let $p \in \textup{cl}(\U) \cap \textup{Max}(u)$ be a $(n-1)$-dimensional regular point. Then $\textup{Max}(u)$ is a $\mathcal{C}^{m+2}$-hypersurface in a neighbourhood of $p$, and
\begin{equation*}
H (p) \leq -\cot_k(\bar{R}) \quad \textup{if } (\bar \U, \bar u) \in {\rm Model}_{M,k,f}^+  
\quad \textup{and} \quad 
H (p)\leq \cot_k(\bar{R})\quad \textup{if } (\bar \U, \bar u) \in {\rm Model}_{M,k,f}^-,
\end{equation*}
where $H(p)$ is computed with respect to the inner orientation of $\U$.
\end{proposition}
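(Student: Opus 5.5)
Two things must be proved: that $\mathrm{Max}(u)$ is a $\mathcal C^{m+2}$-hypersurface near the regular point $p$, and that its mean curvature there is bounded as stated.

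\textbf{Regularity of $\mathrm{Max}(u)$.} Near $p$, $\mathrm{Max}(u)$ is a $\mathcal C^1$ hypersurface $\Sigma$ on which $u=M$ and $\nabla u=0$. Since $f\in\mathcal C^m$, elliptic regularity gives $u\in\mathcal C^{m+2,\alpha}_{loc}$. At $p$ the Hessian vanishes in the tangential directions. Hence $\nabla^2u(\nu,\nu)=\Delta u=-f(M)<0$, where $\nu$ is a unit normal; this uses $f>0$ on $\mathcal I_f$, which is forced by $k$-admissibility. Now apply the implicit function theorem to the function $\langle\nabla u,\nu\rangle$, with $\nu$ extended smoothly, for instance as the gradient of a signed-distance-type coordinate. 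Its zero set is a $\mathcal C^{m+1}$ hypersurface that contains $\Sigma$, so $\Sigma$ coincides with it. A bootstrap is then needed to reach $\mathcal C^{m+2}$: $\Sigma$ is the nondegenerate critical manifold of $u$, and in Fermi coordinates $(t,y)$ about $\Sigma$ one writes $u=M-\tfrac12 a(y)t^2+O(t^3)$ with $a>0$. The Morse--Bott lemma, or the standard argument of \cite{EMa2}, then gives the stated regularity. I expect the requirement $m\ge n-2$ enters only through the related Lemma~\ref{lemma_Adm} (a Łojasiewicz/Sard-type argument), and here I would take it as given.

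\textbf{Expansions on both sides.} Let $\gamma(t)=\exp_p(t\nu)$, with $\nu$ pointing into $\mathcal U$. Expand $W(\gamma(t))$ to order $t^3$ (or $t^4$). The quadratic term is $W=u_{tt}(0)^2t^2+O(t^3)=f(M)^2t^2+O(t^3)$. The cubic coefficient comes from differentiating the PDE in the normal direction. In Fermi coordinates $\Delta u=u_{tt}+(n-1)H_t\,u_t+\Delta_{\Sigma_t}u$, where $H_t$ is the mean curvature of the parallel hypersurfaces, and differentiating at $t=0$ gives
\[
u_{ttt}(0)=-f'(M)u_t(0)-(n-1)H(p)u_{tt}(0)-\partial_t\Delta_{\Sigma_t}u=(n-1)H(p)f(M)+\text{(tangential terms)}.
\]
The tangential terms vanish at $p$ because $u\equiv M$ on $\Sigma$ to second order tangentially; I would check this carefully using the $\mathcal C^{m+2}$ regularity. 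For the model, expand the profile around $\bar R$ in $s=r-\bar R$, where $\bar U''(\bar R)=-f(M)$ and $\bar U'''(\bar R)=(n-1)\cot_k(\bar R)f(M)$ from the ODE~\eqref{ODERadial}. Since $\Psi$ is defined through $u=\bar U(\Psi)$, I express $\bar W=\bar U'(\Psi)^2$ as a function of $M-u$. This gives
\[
\bar W=2f(M)(M-u)\bigl(1+c_{\pm}\sqrt{M-u}+\dots\bigr),
\]
and similarly $W=2f(M)(M-u)\bigl(1+d\sqrt{M-u}+\dots\bigr)$. Here $d$ is proportional to $H(p)$ and $c_\pm$ is proportional to $\pm\cot_k(\bar R)$, with the sign fixed by the branch: moving in the $+$ direction, $r$ increases from $\bar R$.

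\textbf{Comparison and main obstacle.} Theorem~\ref{theo:Gradient} gives $W\le\bar W$ along $\gamma$. The leading terms agree automatically, so the first nontrivial term gives $d\le c_\pm$. This yields $H(p)\le-\cot_k(\bar R)$ on the $+$ branch and $H(p)\le\cot_k(\bar R)$ on the $-$ branch, the sign flip coming from the inner orientation relative to $\partial_r$. The main obstacle is the careful third-order expansion of $W$ at a degenerate critical point. Writing both quantities in terms of the common variable $M-u$ avoids matching $t$ against $\Psi-\bar R$. Checking that the tangential Laplacian terms do not contribute at this order is the delicate point, and the $\mathcal C^{m+2}$ regularity obtained above is what justifies the differentiation.
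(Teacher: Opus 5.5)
Your proposal is correct and follows essentially the paper's route. You get the regularity of $\mathrm{Max}(u)$ from elliptic regularity plus a Morse--Bott-type statement (the paper cites Banyaga--Hurtubise), then compare third-order expansions of $W$ and $\bar W$ along the inner normal through Theorem~\ref{theo:Gradient} on the side of $\Sigma$ lying in $\mathcal U$; parametrizing by $M-u$ instead of the signed distance of \eqref{eq:expansion1}--\eqref{eq:expansion2} is only cosmetic, and it spares you inverting $\Psi$.

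One sign remark. With your convention $\Delta u=u_{tt}+(n-1)H_tu_t+\Delta_{\Sigma_t}u$ (i.e.\ $\operatorname{div}\nu=(n-1)H$), $d$ is a negative multiple of $H(p)$, so $d\le c_+$ reads $H(p)\ge\cot_k(\bar R)$. This is the stated bound only under the paper's opposite convention $\operatorname{div}\nu=-(n-1)H$, the one for which \eqref{eq:expansion1} holds. The flip does not come from the orientation, since on the $+$ branch the inner normal of $\mathcal U$ corresponds to $\partial_r$ itself.
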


\begin{proof}
Since $u=0$ along $\partial \Omega$ and $u>0$ in $\Omega$, we have $p\in\Omega$. Since $p$ is a regular point, there exists a neighbourhood $\mathcal{V}\subset\Omega$ of $p$ such that
\(
\Sigma:=\mathcal{V}\cap \textup{Max}(u)
\)
is an embedded, two-sided $\mathcal{C}^1$ hypersurface dividing $\mathcal{V}\setminus\Sigma$ into two connected components $\mathcal{V}_+$ and $\mathcal{V}_-$, with $\mathcal{V}_+\subset\U$. Since $f\in \mathcal{C}^m$, elliptic regularity gives $u\in \mathcal{C}^{m+2}$ in $\Omega$, and \cite[Theorem~2]{BanyagaHurtubise04} implies that $\Sigma$ is $\mathcal{C}^{m+2}$ in a (possibly smaller) neighbourhood of $p$.

Denote again this neighbourhood by $\mathcal{V}$ and define the signed distance function
\[
s(x):=\left\{\begin{matrix}
+\textup{dist}(x,\Sigma) & \textup{if } x\in\mathcal{V}_+,\\[2mm]
-\textup{dist}(x,\Sigma) & \textup{if } x\in\mathcal{V}_-.
\end{matrix}\right.
\]
Since $u$ is at least $\mathcal{C}^5$ in $\Omega$, we can use the fourth-order expansion in \cite[Theorem 3.1]{Chr} and reproduce the argument of \cite[Proposition 5.2]{EMa2} to obtain, around $p\in\Sigma$,
\begin{equation}\label{eq:expansion1}
W = f(M)^2 s^2 \left( 1+ (n-1)H(p)\, s \right)+ O(s^4),
\end{equation}
\begin{equation}\label{eq:expansion2}
\bar{W} = f (M)^2 s^2 \left(1+ (n-1) \left(\frac{H (p)}{3} \pm \frac{2 \cot_k(\bar R)}{3}\right)s\right)+ O (s^4),
\end{equation}
where $H(p)$ is computed with respect to the normal pointing into $\mathcal{V}_+$, and in \eqref{eq:expansion2} we take the positive sign when $(\bar \U, \bar u)\in {\rm Model}_{M,k,f}^+$ and the negative sign otherwise. The claim then follows as in Proposition~\ref{prop_curvatureZeroSets}, using that $W\le \bar W$ in $\mathcal{V}_+$ by Theorem~\ref{theo:Gradient}.
\end{proof}

As a consequence of Proposition~\ref{prop_CurvatureMaxSets}, we can now prove Lemma~\ref{lemma_Adm}.

\begin{proof}[Proof of Lemma \ref{lemma_Adm}]\label{proof:lemmaADM}
We first prove the \emph{item 1}. The argument is the same as in \cite[Proposition 5.3]{EMa2}, but we include it for completeness. Assume that there exists a $(n-1)$-dimensional regular point \(p\in \mathrm{Max}(u)\cap \bar{\mathcal U}\) such that $u$ is $\mathcal{C}^{m+2}$, $m+2\geq 5$, near $p$. Arguing by contradiction, suppose that \(\overline{\tau}(\mathcal U)\le \overline\tau_{M,k,f}(0)\). Then Proposition~\ref{prop_CurvatureMaxSets} implies that \(\mathrm{Max}(u)\) is $\mathcal{C}^{m+2}$ near $p$ and that its mean curvature satisfies
\[
H(p)\leq -\cot_{k}(R)\, \text{ for all } R\in(0,\bar r_{k}),
\]
since every pair in $\operatorname{Model}^+_{M,k,f}$ is a comparison pair. Letting $R \to 0^+$ yields that \(H(p)\) is unbounded, a contradiction. Therefore, $\overline{\tau}(\mathcal U)>\tau^0_{k,f}(M)=\inf_{R\in[0,\bar r_k)}\overline\tau_{M,k,f}^+(R)$.

\medskip
We now prove the \emph{item 2}. Let $(\mathcal{M},g) = \mathbb{M}^n (k)$ with $k \leq 0$. We show that
\[
\overline{\tau} (\U) \notin \mathrm{Gap}_{M,k,f}.
\]
Again, we argue by contradiction.

Since $f\in \mathcal{C}^{m}$ where $m:= \max\{n-2,3\}$ satisfies the \nameref{quote:StandardCondition} and $M \in \mathcal{I}_f$, the maximum principle ensures that $\textup{cl}(\mathcal{U}) \cap \partial \Omega \neq \emptyset$. Choose a connected component $\Gamma \subset \textup{cl}(\mathcal{U}) \cap \partial \Omega$. Because $\Gamma$ is part of a level set of a $\mathcal{C}^{m+2}$ function and $\Omega$ is bounded, $\Gamma$ is a properly embedded, compact $\mathcal{C}^{m+2}$ hypersurface in $\mathbb{M}^n(k)$. By the Jordan--Brouwer Separation Theorem, $\mathbb{M}^n(k) \setminus \Gamma = \mathcal{V}_1 \cup \mathcal{V}_2$, where $\mathcal{V}_1$ is open and bounded. Thus either $\mathcal{U} \subset \mathcal{V}_1$ or $\mathcal{U} \subset \mathcal{V}_2$.

\begin{figure}[h!]
\centering
\begin{minipage}[t]{0.41\textwidth}
\vspace{40pt}
\centering
\begin{tikzpicture}[baseline=(current bounding box.north)]
  \node[anchor=south west, inner sep=0] (img) at (0,0)
    {\includegraphics[width=\linewidth]{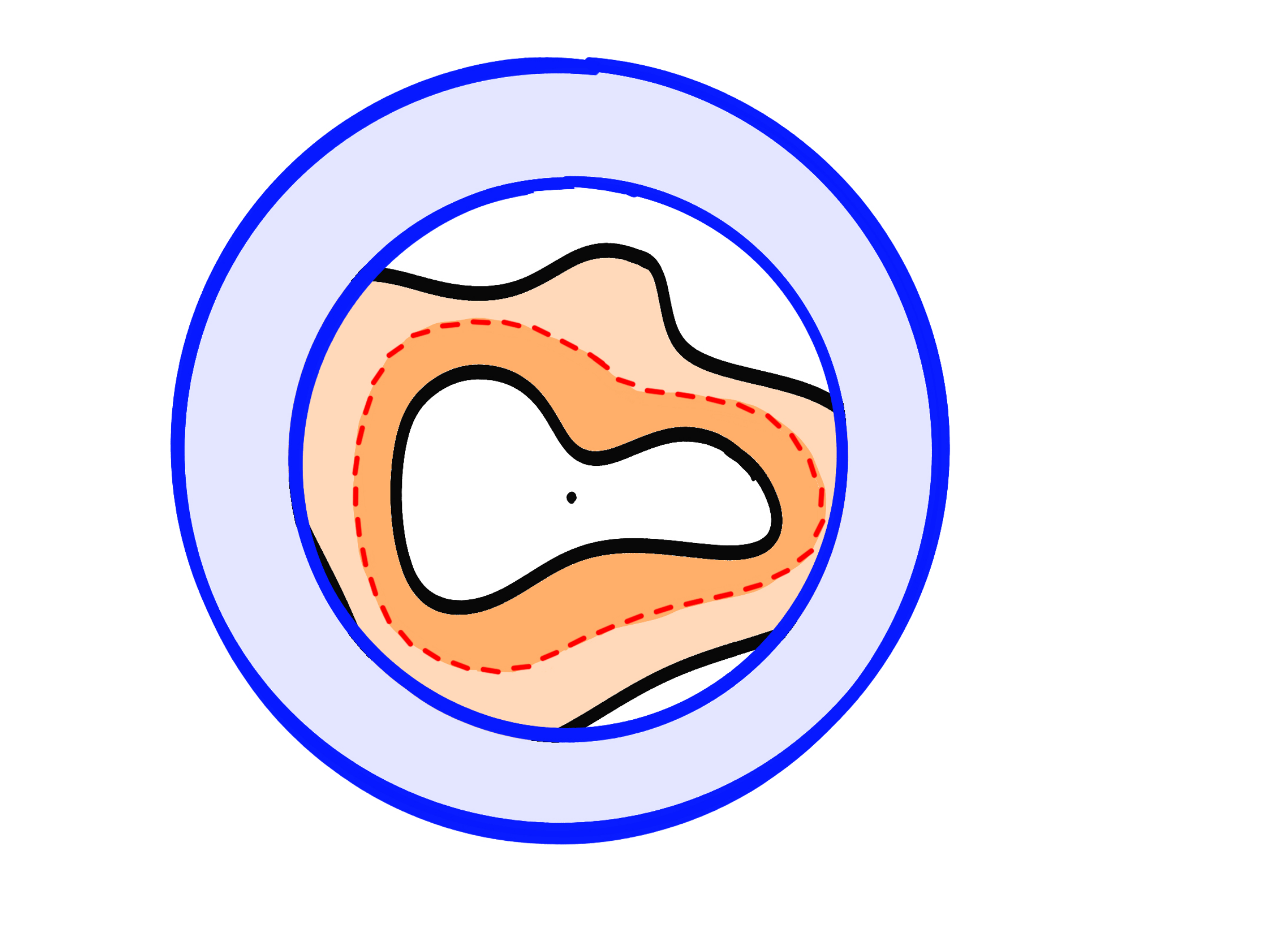}};
    
  \begin{scope}[x={(img.south east)}, y={(img.north west)}]
    \node[anchor=south west, font=\scriptsize] at (0.53,0.63) {$\mathcal{V}_2$};
    \node[ font=\scriptsize] at (0.43,0.86) {$\mathcal{U}^+_{R_0,M,k,f}$};
    \node[anchor=west, font=\scriptsize] at (0.245,0.6) {$\Gamma$};
    \node[anchor=west, font=\scriptsize] at (0.455,0.68) {$\Omega$};
    \node[anchor=west, font=\scriptsize] at (0.33,0.53) {$\mathcal{V}_1$};
    \node[anchor=west, font=\scriptsize] at (0.395,0.48) {$p$};
    \node[anchor=west, font=\scriptsize] at (0.42,0.36) {$\mathcal{U}$};
  \end{scope}
\end{tikzpicture}
\end{minipage}
\hfill
\begin{minipage}[t]{0.58\textwidth}
\vspace{0pt}

\centering
\begin{tikzpicture}
  \node[anchor=south west, inner sep=0] (img) at (0,0)
    {\includegraphics[width=\linewidth]{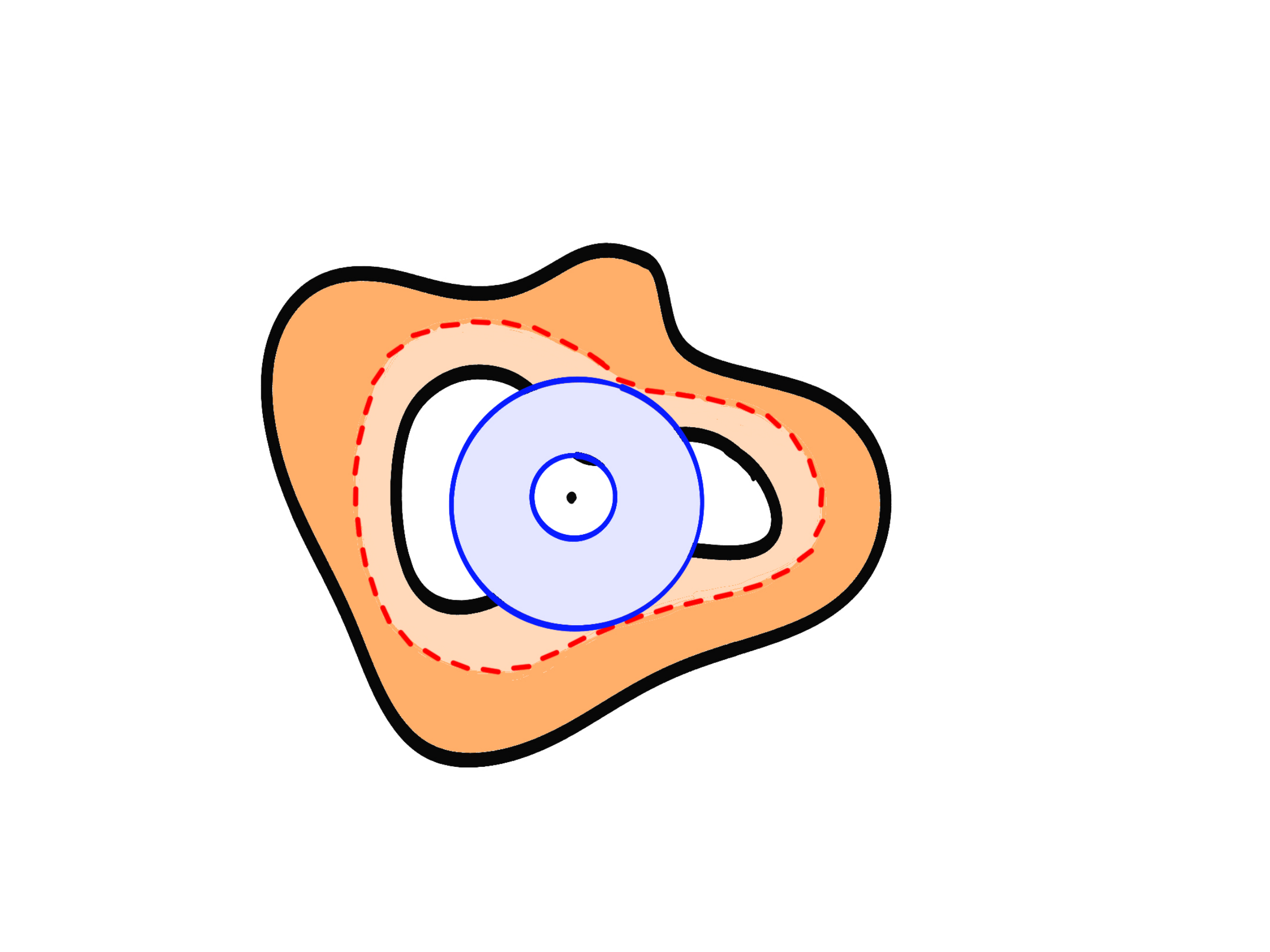}};
    
  \begin{scope}[x={(img.south east)}, y={(img.north west)}]
    \node[anchor=south west, font=\scriptsize] at (0.13,0.55) {$\mathcal{V}_2$};
    \node[ font=\tiny] at (0.455,0.42) {$\mathcal{U}^-_{R_0,M,k,f}$};
    \node[anchor=west, font=\scriptsize] at (0.3,0.6) {$\Gamma$};
    \node[anchor=west, font=\scriptsize] at (0.46,0.70) {$\Omega$};
    \node[anchor=west, font=\scriptsize] at (0.31,0.53) {$\mathcal{V}_1$};
    \node[anchor=west, font=\scriptsize] at (0.41,0.48) {$p$};
    \node[anchor=west, font=\scriptsize] at (0.4,0.28) {$\mathcal{U}$};
  \end{scope}
\end{tikzpicture}
\end{minipage}

\caption{Schematic representation of the two comparison configurations used in the proof of Lemma~\ref{lemma_Adm}. 
The dark orange region represents $\mathcal U$, the dashed red hypersurface represents $\Gamma$, and the blue region denotes the comparison region centered at $p$. 
Left: Case~1, where $\mathcal U\subset \mathcal V_1$ and the family $\mathcal U^+_{R,M,k,f}$ is shrunk from outside $\mathcal V_1$. 
Right: Case~2, where $\mathcal U\subset \mathcal V_2$ and the family $\mathcal U^-_{R,M,k,f}$ is expanded from $\mathcal V_1$.}
\end{figure}

\begin{itemize}
\item[] \textbf{Case 1: \(\mathcal U\subset \mathcal V_{1}\).}
For any point $p \in \mathcal{U}$, let $u_{R,M,k,f}$ denote the model solution centered at $p$, written in terms of $r_p=\textup{dist}(p,\cdot)$. We use the notation $\U_{R,M,k,f}^{\pm}$ from Section~\ref{subsec:ComFram} for the regions where $u_{R,M,k,f}$ is positive and does not attain its maximum.
Choose $R_0 > 0$ so large that $\U_{R_0,M,k,f}^{+} \cap \mathcal{V}_1 = \emptyset$. For $R < R_0$, set
\[
w = u_{R,M,k,f} - u \quad \text{in} \quad \U_{R,M,k,f}^{+} \cap \mathcal{U}.
\]
By decreasing $R$ slightly from $R_0$, we reach a first-contact configuration such that
\begin{enumerate}
	\item $w \leq 0$ in $\U_{R,M,k,f}^{+} \cap \mathcal{U}$,
	\item there exists a point $q$ either in $\U_{R,M,k,f}^{+} \cap \mathcal{U}$ 
	or in $\Gamma_{R,M,k,f}^{+} \cap \partial \mathcal{U}$ such that $w(q) = 0$. 
\end{enumerate}
Since $\overline{\tau}(\U_{R,M,k,f}^{+}) < \overline{\tau}(\mathcal{U})$, the strong maximum principle implies that $u_{R,M,k,f}\equiv u$ in a neighbourhood of $q$, a contradiction.

\item[] \textbf{Case 2:  $\mathcal{U} \subset \mathcal{V}_2$.}
Choose $p \in \mathcal{V}_1$ and $R_0 > 0$ so that $\U_{R_0,M,k,f}^{-} \subset \mathcal{V}_1$. Define
\[
w = u - u_{R,M,k,f} \quad \text{in} \quad \U_{R,M,k,f}^{-} \cap \mathcal{U},
\]
and take $R > R_0$ large. The same first-contact argument yields a contradiction with the strong maximum principle.
\end{itemize}
Both cases are impossible, hence
\[
\overline{\tau}(\mathcal U) \notin \mathrm{Gap}_{M,k,f}.
\]
\end{proof}

\subsection{An isoperimetric inequality}\label{sec:IsoIneq}
In this subsection we prove Theorem~\ref{theo:Isoperimetric} for a component $(\U,u)$ such that $\textup{Max}(u)\cap \textup{cl}(\U)$ contains a Lipschitz-continuous hypersurface. The key input is an area bound for the top level set, extending \cite[Proposition~5.4]{EMa2} to arbitrary dimension and to general level sets.

Throughout, we keep the standing assumptions on $(\M,g)$ and assume that $f\in \mathcal{C}^{m}(\mathbb R)$, where $m=\max\{n-2,3\}$, is $k$-admissible and satisfies \eqref{conditionf}.

\begin{proposition}\label{prop_AreaLevelSet}
 Let $(\bar \U, \bar u )$ be a comparison pair with core radius $\bar R > 0$. Let $t \in [0, M)$ be a regular value of $u$ and set $\Gamma _t := u ^{-1} (t)$. Assume that the $(n-1)$-dimensional part of $\textup{cl}(\U) \cap \textup{Max}(u)$, denoted by $\Gamma_{M} \subset {\rm cl}(\mathcal U) \cap {\rm Max}(u)$, is a Lipschitz-continuous hypersurface. Then, 
	\begin{equation}\label{AreaBound}
		\mathcal{H}^{n-1} (\Gamma_{M}) \leq \left\{ \begin{matrix}
			\left(\dfrac{s_k(\bar{R})}{s_k (\chi_+ (t))}\right)^{n-1} \mathcal{H}^{n-1}(\Gamma _t)  & \text{ if } & (\bar \U, \bar u) \in {\rm Model}_{M,k,f}^+,  \\[6mm]
			\left(\dfrac{s_k(\bar{R})}{s_k (\chi_- (t))}\right)^{n-1} \mathcal{H}^{n-1}(\Gamma _t) &  \text{ if } &  (\bar \U, \bar u) \in {\rm Model}_{M,k,f}^-,
		\end{matrix}\right.
	\end{equation}
	where $\chi_{\pm}$ is given in \eqref{chiFunctions}. Furthermore, equality holds in \eqref{AreaBound} for some $t$ if, and only if $(\U,u) \equiv (\bar \U, \bar u)$.
\end{proposition}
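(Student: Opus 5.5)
The plan is to obtain \eqref{AreaBound} from a monotonicity argument: along the regular level sets of $u$ in $\mathcal U$ (equivalently, the level sets $\{\Psi=r\}$ of the pseudo-radial function), we show that a weighted area is monotone. Treat the case $(\bar\U,\bar u)\in{\rm Model}^+_{M,k,f}$; the other case is symmetric, with $\chi_-$ in place of $\chi_+$ and the orientation reversed. Define, for regular values $t\in[0,M)$,
\[
A(t):=\frac{1}{s_k(\chi_+(t))^{n-1}}\int_{\Gamma_t}\frac{|\nabla u|}{|\bar U'(\Psi)|}\,d\mathcal H^{n-1}.
\]
The integrand $\sqrt{W/\bar W}$ is at most $1$ by Theorem~\ref{theo:Gradient}, so $A(t)\le \mathcal H^{n-1}(\Gamma_t)/s_k(\chi_+(t))^{n-1}$. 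The claim then reduces to two facts: $A$ is non-decreasing as $t\uparrow M$ (that is, as $\Psi\downarrow\bar R$), and $\lim_{t\to M}A(t)=\mathcal H^{n-1}(\Gamma_M)/s_k(\bar R)^{n-1}$.

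For the monotonicity, take $t_1<t_2$ regular and apply the divergence theorem on $\{t_1<u<t_2\}\cap\mathcal U$ to the vector field $X=\phi(u)\nabla u$. Using $\Delta u=-f(u)$ gives
\[
\operatorname{div}X=\phi'(u)W-\phi(u)f(u).
\]
Choose $\phi$ so that the model identity is exact, i.e.\ so that $\phi(\bar U)\,\bar U'\,s_k^{n-1}$ has derivative zero in the model. Concretely, take $\phi(s)=1/\bigl(|\bar U'(\chi_+(s))|\,s_k(\chi_+(s))^{n-1}\bigr)$. By the ODE \eqref{ODERadial}, $\phi'(s)\bar W-\phi(s)f(s)=0$. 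Hence
\[
\operatorname{div}X=\phi'(u)(W-\bar W).
\]
One checks that $\phi'\le0$ on the + branch: $|\bar U'|s_k^{n-1}$ is increasing in $r$ because $(s_k^{n-1}\bar U')'=-s_k^{n-1}f(\bar U)<0$, while $r=\chi_+(s)$ decreases in $s$. Combined with $W\le\bar W$, this gives $\operatorname{div}X\ge0$. The boundary flux at level $t$ is exactly $-A(t)$, up to orientation, so $A$ is monotone in the right direction. Singular values are handled by Sard's theorem together with the coarea formula, passing through the integrated form of the identity.

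For the limit at the top level, use the Lipschitz hypersurface $\Gamma_M$. Near its $\mathcal C^{m+2}$ regular points the expansions \eqref{eq:expansion1}--\eqref{eq:expansion2} show that $W/\bar W\to1$ and that $\Gamma_t$ converges, as a graph over $\Gamma_M$, with area converging to $\mathcal H^{n-1}(\Gamma_M)$, while $\chi_+(t)\to\bar R$. \textbf{This step is the main obstacle.} Controlling the contribution of non-regular parts of ${\rm Max}(u)$ of lower dimension, and justifying the limit only from Lipschitz regularity, seems to require a measure-theoretic argument. The first thing I would try is to use the flux formulation instead: write $\lim_{t\to M}A(t)$ as the flux of $X$ through $\Gamma_M$ computed from the one-sided normal derivative, and show that lower-dimensional pieces carry zero $\mathcal H^{n-1}$-measure. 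Note also that only the inequality $\liminf_{t\to M} A(t)\ge \mathcal H^{n-1}(\Gamma_M)/s_k(\bar R)^{n-1}$ is needed, because $\Gamma_M\subset\partial\mathcal U$ and the remaining pieces of $\partial\mathcal U$ only contribute nonnegatively.

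For rigidity, suppose equality holds for some $t$. Then every inequality above is an equality. In particular $W=\bar W$ on $\Gamma_t$, and the equality case of Theorem~\ref{theo:Gradient} gives $(\mathcal U,u)\equiv(\bar\U,\bar u)$. The converse is immediate in the model, where $\mathcal H^{n-1}(\Gamma_t)=s_k(\chi_+(t))^{n-1}\mathcal H^{n-1}(\Gamma_0)$ exactly.
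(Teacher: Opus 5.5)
Your strategy is essentially the paper's. You apply the divergence theorem to $X=\phi(u)\nabla u$ with $\phi=1/\bigl(s_k(\Psi)^{n-1}|\bar U'(\Psi)|\bigr)$ between $\Gamma_t$ and regular levels approaching the top. You bound the flux through $\Gamma_t$ by $W\le\bar W$, and you recover $\mathcal H^{n-1}(\Gamma_M)/s_k(\bar R)^{n-1}$ at the top from \eqref{eq:expansion1}--\eqref{eq:expansion2} at a.e.\ point of the Lipschitz $\Gamma_M$. Your exponent $n-1$ is the right one. It gives exactly $\operatorname{div}X=\phi'(u)(W-\bar W)$ with $\phi'(s)=f(s)/\bigl(s_k(\chi_+(s))^{n-1}|\bar U'(\chi_+(s))|^3\bigr)$. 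The paper's display writes $s_k(\Psi)^n$, but its flux estimates are those of your field.

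There is, however, a sign error in the monotonicity step, and as written it breaks the deduction. On the $+$ branch, $h(r):=|\bar U'(r)|\,s_k(r)^{n-1}$ increases from $h(\bar R)=0$, while $\chi_+$ decreases from $\bar r_+$ to $\bar R$. So $h\circ\chi_+$ decreases to $0$ as $s\uparrow M$, and $\phi=1/(h\circ\chi_+)$ is increasing. Thus $\phi'>0$, not $\phi'\le 0$; your own justification proves this. Hence $\operatorname{div}X=\phi'(u)(W-\bar W)\le 0$. The divergence theorem on $\{t_1<u<t_2\}\cap\mathcal U$ then gives $A(t_2)-A(t_1)=\int\operatorname{div}X\le0$, so $A$ is non-increasing as $t\uparrow M$. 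That is exactly what is needed:
\[
\frac{\mathcal H^{n-1}(\Gamma_M)}{s_k(\bar R)^{n-1}}\le\liminf_{s\to M}A(s)\le A(t)\le\frac{\mathcal H^{n-1}(\Gamma_t)}{s_k(\chi_+(t))^{n-1}}.
\]
The reduction you state (``$A$ non-decreasing as $t\uparrow M$'') would only give $A(t)\le\lim_{s\to M}A(s)$. Combined with $A(t)\le\mathcal H^{n-1}(\Gamma_t)/s_k(\chi_+(t))^{n-1}$, that yields nothing. Your later remark that only the $\liminf$ lower bound at the top is needed is correct, but precisely for the corrected direction. The integrand is nonnegative, so the parts of $\Gamma_t$ near lower-dimensional pieces of $\mathrm{Max}(u)$ only help. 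The $-$ branch behaves the same way: $\bar U's_k^{n-1}$ decreases to $0$ at $\bar R$ while $\chi_-$ increases, so again $\phi'>0$.

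One further point on rigidity. When $t=0$, the level $\Gamma_0$ lies in $\partial\Omega$, so $W=\bar W$ there does not trigger the interior equality case of Theorem~\ref{theo:Gradient}. Instead, note that equality forces $A$ to be constant on $[t,M)$. Then the one-signed integral $\int\phi'(u)(W-\bar W)$ vanishes, so $W=\bar W$ on a nonempty open subset of $\mathcal U$. This is what the paper does through equality in \eqref{3}.
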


\begin{proof}
The argument follows \cite[Proposition~2.4]{ABBM}, which treats the case $t=0$ for Serrin's equation in $\r^n$. Define the vector field
\begin{equation}\label{X}
\mathcal X (p) :=   \frac{1}{s_k(\Psi)^n \bar{U}' (\Psi)}  \nabla u \quad \text{for all } p \in \Omega  \setminus \textup{Max}(u),
\end{equation}
whose divergence is
\[
\operatorname{div}(\mathcal X ) = \frac{1}{s_k(\Psi) ^{n} \bar{U}' (\Psi)^3} \left( f(u)(W-\bar{W})-\cot_k(\Psi)W \right) .
\]
Since $f$ is of class $\mathcal{C}^m$, it follows that $u \in \mathcal{C}^{m+2} (\U)$, so Sard's Theorem implies that the set of critical values of $u$ in $\U$ has measure zero. Hence, we can fix $\varepsilon>0$ arbitrarily small such that $M-\varepsilon$ is a regular value of $u$, and set
\[
\U_{t, \varepsilon} = \set{p \in \U ~\colon~t < u(p)< M- \varepsilon}.
\]
Applying the divergence theorem in $\U_{t,\varepsilon}$ gives
\begin{equation}\label{integralIdentity}
\begin{aligned}
\int_{\U_{t,\varepsilon}}\frac{1}{s_k (\Psi)^{n} \bar{U}' (\Psi)^3}& \bigg( f(u)(W-\bar{W})-\cot_k(\Psi)W \bigg) \, d\mu =\\ 
& \int_{\Gamma _t} \frac{-\abs{\nabla u}}{s_k(\Psi)^n\bar{U}' (\Psi)} \, d \sigma+\int_{\Gamma_{M-\varepsilon}} \frac{\abs{\nabla u}}{s_k(\Psi)^n\bar{U}' (\Psi)} \, d \sigma,
\end{aligned}
\end{equation}
where $d\mu$ and $d\sigma$ are the volume elements of $\mathcal{M}$ and the induced hypersurface measure, respectively, and the inner unit normal satisfies $\nu = \nabla u / \abs{\nabla u}$ along $\Gamma_t$ and $\nu=-\nabla u/\abs{\nabla u}$ along $\Gamma_{M-\varepsilon}$.

Along $\Gamma_t$ the pseudo-radial function is constant, $\Psi=\chi_\pm(t)$, and Theorem~\ref{theo:Gradient} gives $W\le \bar W$. Therefore,
\begin{equation}\label{1}
\int_{\Gamma _t}  \frac{\abs{\nabla u}}{s_k(\Psi)^n\bar{U}' (\Psi)}\, d\sigma
\left\{	\begin{matrix}
\geq - \dfrac{\mathcal{H}^{n-1}(\Gamma _t)}{s_k^{n-1}(\chi_+(t))} & \text{ if } &(\bar \U, \bar u) \in {\rm Model}_{M,k,f}^+ ,\\[6mm]
\leq \dfrac{\mathcal{H}^{n-1}(\Gamma _t)}{s_k^{n-1}(\chi_-(t))} & \text{ if } &(\bar \U, \bar u) \in {\rm Model}_{M,k,f}^-.
\end{matrix}\right.
\end{equation}

Since $\Gamma_M$ is Lipschitz, it is $(n-1)$-rectifiable and has differentiability points of full $\mathcal H^{n-1}$-measure. Using \cite[Theorem 2]{BanyagaHurtubise04} and the expansions \eqref{eq:expansion1}--\eqref{eq:expansion2}, we have
\[
\lim_{x \in \U,\, x \to p} \frac{W}{\bar W} =1
\quad\text{for }\mathcal H^{n-1}\text{-a.e. }p\in\Gamma_M.
\]
Taking a sequence $\varepsilon_n\to0$ such that $M-\varepsilon_n$ is a regular value for all $n$, we obtain
\begin{equation}\label{2}
\lim_{n \to +\infty} \int_{\Gamma_{M-\varepsilon_n}}  \frac{\abs{\nabla u}}{s_k(\Psi)^n\bar{U}' (\Psi)} \, d \sigma= 
\left\{	\begin{matrix}
- \dfrac{\mathcal{H}^{n-1}(\Gamma_{M})}{s_k^{n-1}(\bar{R})} & \text{ if } & (\bar \U, \bar u) \in {\rm Model}_{M,k,f}^+ ,\\[6mm]
\dfrac{\mathcal{H}^{n-1}(\Gamma_{M})}{s_k^{n-1}(\bar{R})} & \text{ if } & (\bar \U, \bar u) \in {\rm Model}_{M,k,f}^-.
\end{matrix}\right.
\end{equation}

Finally, since $\bar U'(\Psi)^3<0$ on $\U$ if $(\bar \U,\bar u)\in{\rm Model}_{M,k,f}^+$ and $\bar U'(\Psi)^3>0$ if $(\bar \U,\bar u)\in{\rm Model}_{M,k,f}^-$, we conclude that
\begin{equation}\label{3}
\int_{\U}\frac{1}{s_k (\Psi)^{n} \bar{U}' (\Psi)^3}  \bigg( f(u)(W-\bar{W})-\cot_k(\Psi)W \bigg) \, d\mu 
\left\{	\begin{matrix}
\geq 0& \text{ if } & (\bar \U, \bar u) \in {\rm Model}_{M,k,f}^+ ,\\[6mm]
\leq 0 & \text{ if } &(\bar \U, \bar u) \in {\rm Model}_{M,k,f}^-.
\end{matrix}\right.
\end{equation} 

Letting $\varepsilon_n\to0$ in \eqref{integralIdentity} and using \eqref{1}--\eqref{3} yields \eqref{AreaBound}.

For rigidity, note that equality in \eqref{AreaBound} forces equality in \eqref{3}, hence $W=\bar W$ on $\U$, and then $(\U,u)\equiv(\bar \U,\bar u)$ by the rigidity statement in Theorem~\ref{theo:Gradient}.
\end{proof}

Combining Proposition~\ref{prop_AreaLevelSet} with the coarea formula yields Theorem~\ref{theo:Isoperimetric}.

\begin{proof}[Proof of Theorem \ref{theo:Isoperimetric}]
We only consider the case $(\bar \U, \bar u) \in {\rm Model}_{M,k,f}^+$. The case $(\bar \U, \bar u)\in{\rm Model}_{M,k,f}^-$ is identical.

By the Unique Continuation principle for $u$, the set of critical points cannot have interior points; thus the hypotheses of \cite[Proposition 2.2]{cadedduBriefNoteCoarea2018} are satisfied, and the coarea formula can be applied yielding, 
$$
\mathcal{H}^n(\U) = \int_{(0,M)} \left( \int_{\Gamma_t} \frac{1}{|\nabla u|} \, d\sigma_t \right) dt,
$$
where  $d\sigma_t$ denotes the induced area element on $\Gamma_t$. Using Theorem~\ref{theo:Gradient} and \eqref{definW}, we obtain
\begin{equation}\label{eq1}
\mathcal{H}^n (\U) \geq \int _0 ^M \left( \int _{\Gamma _t} \frac{1}{ \sqrt{\bar W} } \, d \sigma _t \right) dt,
\end{equation}
since $\bar W>0$ on $\Gamma_t$ for $t<M$.

Moreover, $\bar W$ is constant along each $\Gamma_t$ and
\[
\bar W = \bar U'(\chi_\pm(t))^2,\quad t\in(0,M),
\]
where we take $\chi_+$ when $\overline{\tau}(\U) \leq \tau_k^+ (M)$ and $\chi_-$ otherwise. Therefore,
\begin{equation}\label{eq2}
\begin{split}
\int _0 ^{M} \left( \int _{\Gamma _t} \frac{1}{ \sqrt{\bar W} } \, d \sigma _t \right) dt
&= \int_0^M \frac{\mathcal H^{n-1}(\Gamma_t)}{|\bar U'(\chi_+(t))|}\,dt \\
&\ge \frac{\mathcal{H}^{n-1}(\Gamma_{M})}{s_k (\bar R)^{n-1}}
\int_{0}^{M} \frac{s_k (\chi_+ (t))^{n-1}}{ |\bar U ' (\chi_+ (t))|} \, dt,
\end{split}
\end{equation}
where we used Proposition~\ref{prop_AreaLevelSet} in the last inequality.

Finally, since $\chi_{+}:[0,M]\to[\bar R,\bar r_+]$ is decreasing, the change of variables $t=\chi_+^{-1}(r)$ yields
\[
\int_{0}^{M} \frac{s_k (\chi_+ (t))^{n-1}}{ |\bar U ' (\chi_+ (t))|} \, dt
= \int_{\bar R}^{\bar{r}_+} s_k (r)^{n-1} \, dr.
\]
Now observe that
\[
\mathcal{H}_k^n ((r_1, r_2) \times \Gamma_0) = \mathcal{H}_k^{n-1} (\Gamma_0) \cdot \int_{r_1}^{r_2} s_k (r)^{n-1} \, dr 
\]
and
\[
\mathcal{H}_k^{n-1} (\{ r_1 \} \times \Gamma_0) = \mathcal{H}_k^{n-1} (\Gamma_0) \cdot s_k (r_1)^{n-1},
\]
for all $r_1, r_2 \in I$ with $r_1 < r_2$.

Hence, combining this with \eqref{eq1}--\eqref{eq2} gives \eqref{volumeBound}. The rigidity statement follows as in Proposition~\ref{prop_AreaLevelSet}
\end{proof}

\subsubsection*{On the extremal case}
We comment on Proposition~\ref{prop_AreaLevelSet} and Theorem~\ref{theo:Isoperimetric} when \(\overline{\tau}(\U)\notin \textup{Adm}_{M,k,f}\).
If \(\textup{Max}(u)\cap \textup{cl}(\U)\) contains a Lipschitz hypersurface, then it has differentiability points of full \((n-1)\)-dimensional measure; in particular, it contains an \((n-1)\)-dimensional regular point. Lemma~\ref{lemma_Adm} then yields
\[
\overline{\tau}(\U) > \tau_{k,f}^0(M),
\]
so \(\overline{\tau}(\U)\notin \textup{Adm}_{M,k,f}\) forces \(\overline{\tau}(\U)\in \mathrm{Gap}_{M,k,f}\).

When \(k>0\), Proposition~\ref{prop:behav} implies \(\mathrm{Gap}_{M,k,f}=\varnothing\), hence the bounds above are always sharp. Assume now \(k\le0\). By Theorem~\ref{theo:Gradient}, the sharpest estimates are obtained by choosing \(\overline{\tau}(\bar\U)\in \textup{Im}(\overline{\tau}_{M,k,f}^-)\) as close as possible to \(\tau_{k,f}^-(M)\). If no \(R\in[0,\infty)\) realizes the infimum,
\[
\overline{\tau}_{M,k,f}^-(R)=\tau_{k,f}^-(M),
\]
then necessarily
\[
\lim_{R\to\infty}\overline{\tau}_{M,k,f}^-(R)=\tau_{k,f}^-(M),
\]
which occurs, for instance, under \eqref{conditionf2}; see Proposition~\ref{prop:ODE2}. In that case, evaluating \eqref{ODERadial} at \(r_-(R,M,k,f)\) gives
\[
0 = U_{R,M,k,f}\bigl(r_-(R,M,k,f)\bigr)
= M - \int_{r_-(R,M,k,f)}^{R} \frac{1}{s_k(y)^{n-1}}
\left( \int_{R}^{y} s_k(x)^{n-1} f\bigl(U_{R,M,k,f}(x)\bigr)\, dx \right) dy,
\]
and hence
\[
\lim_{R \to \infty} r_-(R,M,k,f) = +\infty.
\]
Letting \(R\to\infty\) in \eqref{AreaBound} and \eqref{volumeBound} then yields, in particular,
\[
\mathcal{H}^{n-1}(\Gamma_M) \le \mathcal{H}^{n-1}(\Gamma_t),
\quad \forall\, t \in [0,M],
\]
since
\[
\lim_{R \to \infty} \frac{s_k(R)}{s_k\bigl(r_-(R,M,k,f)\bigr)} = 1.
\]
Indeed, if this limit were different from $1$, then
\[
\lim_{R \to \infty}
\frac{\displaystyle\int_{r_-(R,M,k,f)}^{R} s_k(r)^{n-1}\, dr}
{s_k(R)^{n-1}} = +\infty,
\]
and Theorem~\ref{theo:Isoperimetric} would force \(\U\) to be unbounded, a contradiction. A similar argument also excludes the possibility that the ratio tends to $0$. Thus Theorem~\ref{theo:Isoperimetric} remains non-trivial in this limiting regime, although the limiting inequality is not explicit in general.

\subsection{Location of the hot-spots}\label{sec:hotspots}

In this subsection we estimate the distance from the maximum set of a solution $(\Omega,u)$ of \eqref{DP} to the boundary, extending the sharp bounds in \cite[Section 5]{ABM}.

\begin{proposition}\label{prop_hotSpots}
Let $f\in \mathcal C^{1}(\mathbb R)$ be a $k$-admissible function satisfying \eqref{conditionf2}. Let $p \in \textup{Max}(u) \cap \textup{cl}(\U)$. Then
\begin{equation}\label{ineqHotSpots}
\textup{dist}(p, \partial \Omega \cap \textup{cl}(\U)) \geq \left\{ \begin{matrix}
\bar{r}_+- \bar R  & \text{ if } & (\bar \U, \bar u) \in {\rm Model}_{M,k,f}^+  \\[6mm]
\bar R-\bar{r}_- &  \text{ if } &  (\bar \U, \bar u) \in {\rm Model}_{M,k,f}^-,
\end{matrix}\right.
\end{equation}
where $\bar r_\pm$ are the boundary radii (zeros) of the profile defining $(\bar \U, \bar u)$ in \eqref{ODERadial}--\eqref{CauchyData}. Furthermore, equality in \eqref{ineqHotSpots} holds if, and only if $(\U,u) \equiv (\bar \U, \bar u)$.
\end{proposition}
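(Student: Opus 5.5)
The plan is to integrate the gradient comparison of Theorem~\ref{theo:Gradient} along a minimizing curve, following \cite[Section 5]{ABM}. I will treat the case $(\bar\U,\bar u)\in{\rm Model}^+_{M,k,f}$; the case ${\rm Model}^-_{M,k,f}$ is the same with $\chi^-$ and $\bar R-\bar r_-$. Since $W\le\bar W$ on $\U$, we have $\abs{\nabla u}\le \abs{\bar U'(\Psi)}$. Where $\nabla u\neq0$, the pseudo-radial function $\Psi=\chi^+\circ u$ then satisfies
\[
\abs{\nabla\Psi}=\frac{\abs{\nabla u}}{\abs{\bar U'(\Psi)}}\le 1 .
\]
Globally, $\Psi$ is Lipschitz on compact subsets of $\U$ with constant at most $1$. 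The reason is that $\chi^+$ is $\mathcal C^1$ on $[0,M)$ and the bound holds almost everywhere, since by Remark~\ref{rem:psi-basic} and unique continuation the critical set of $u$ in $\U$ has empty interior. So $\Psi$ is $1$-Lipschitz with respect to the intrinsic length metric of $\textup{cl}(\U)$.

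Now fix $p\in\textup{Max}(u)\cap\textup{cl}(\U)$ and $q\in\partial\Omega\cap\textup{cl}(\U)$ realizing $d:=\textup{dist}(p,\partial\Omega\cap\textup{cl}(\U))$. Let $\gamma$ be a unit-speed minimizing geodesic from $q$ to $p$. Continuity of $\chi^+$ up to $M$ gives $\Psi(p)=\chi^+(M)=\bar R$ and $\Psi(q)=\chi^+(0)=\bar r_+$.

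\textbf{Main obstacle.} The geodesic $\gamma$ must stay in $\textup{cl}(\U)$, or at least $u\circ\gamma$ must be controlled along it. It could leave $\U$ through other parts of $\textup{Max}(u)$, or enter another component of $\Omega\setminus\textup{Max}(u)$. I will handle this by taking the last time $t_0$ at which $\gamma$ meets $\textup{Max}(u)$; any point of $\textup{Max}(u)$ has $\Psi$-value $\bar R$. I will also use the first time $t_1>t_0$ at which it meets $\partial\Omega$. The segment $\gamma((t_0,t_1))$ then lies in a single component of $\Omega\setminus\textup{Max}(u)$, of length at most $d$. If that component is $\U$, the Lipschitz bound gives
\[
\bar r_+-\bar R=\abs{\Psi(\gamma(t_1))-\Psi(\gamma(t_0))}\le t_1-t_0\le d .
\]
For a different component, I will use instead the ordering $\overline\tau(\U)=\overline\tau(\Omega)$ assumed at the start of this section. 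This makes $(\bar\U,\bar u)$ a comparison pair for every component, presumably using the model built on that component's fiber; since $\bar U$ does not depend on $F$, the same bound follows. Since $\Psi$ is only defined away from the critical set, I will run the integration on the function $t\mapsto \chi^+(u(\gamma(t)))$. This is absolutely continuous on compact subintervals of $(t_0,t_1)$, with derivative at most $1$ in absolute value almost everywhere, and the endpoints are handled by continuity.

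\textbf{Rigidity.} If equality holds, then $\frac{d}{dt}\Psi(\gamma(t))=-1$ almost everywhere on $(t_0,t_1)$, so $\abs{\nabla u}=\abs{\bar U'(\Psi)}$ at points of $\gamma$ in $\U$. The equality case of Theorem~\ref{theo:Gradient} then gives $(\U,u)\equiv(\bar\U,\bar u)$. Conversely, in the model the radial geodesic from $\Gamma_{\bar R,M,k,f}$ to $\Gamma^+_{\bar R,M,k,f}$ has length exactly $\bar r_+-\bar R$, so equality holds there. Condition \eqref{conditionf2} enters only through the hypotheses feeding Theorem~\ref{theo:Gradient} and the existence and monotonic selection of the comparison pair from Proposition~\ref{prop:ODE2}.
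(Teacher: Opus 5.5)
Your argument is correct and is essentially the proof the paper defers to \cite[Theorem 5.1]{ABM}. The standing assumption $\overline\tau(\mathcal U)=\overline\tau(\Omega)$ gives $W\le\bar W$ on every component of $\Omega\setminus\textup{Max}(u)$, so $\Psi=\chi^\diamond\circ u$ satisfies $|\nabla\Psi|\le 1$ there, and integrating along a minimizing geodesic yields the bound; note that $\Psi$ is in fact $\mathcal C^2$ on each component since $u<M$, so the unique-continuation detour is unnecessary. Three small repairs are needed:
\begin{itemize}
\item Parametrize $\gamma$ from $p$. As written, with $\gamma$ running from $q$ to $p$, the last visit to $\textup{Max}(u)$ is the endpoint and no $t_1>t_0$ exists.
\item In the equality case, $t_0=0$ and $t_1=d$ force the segment to end at $q\in\partial\Omega\cap\textup{cl}(\mathcal U)$, so it lies in $\mathcal U$ itself rather than in some other component. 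You need this to conclude $(\mathcal U,u)\equiv(\bar{\mathcal U},\bar u)$.
\item What Theorem~\ref{theo:Gradient} actually requires is \eqref{conditionf}, not \eqref{conditionf2}. Proposition~\ref{prop:ODE2} plays no role, since the comparison pair is given.
\end{itemize}
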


The proof is identical to \cite[Theorem 5.1]{ABM}, so we omit it. To obtain the normalized estimate in Theorem~\ref{theo:HotSpots}, we also need a pointwise lower bound for $u$ in terms of the distance to $\partial\Omega$, obtained by comparison with the Serrin's ball profile as in \cite[Lemma 2.1]{MP}.

\begin{lemma}\label{lemma_distanceToBoundary}
Let $(\Omega,u)$ solve \eqref{DP} with $f(x) \leq f_k(x)=nkx+1$. Suppose that $\Omega$ is contained in a convex domain of $\mathcal{M}$, and define $d(p) = \textup{dist} (p, \partial \Omega)$ for all $p \in \textup{cl} (\Omega)$. Then
\begin{equation}\label{ineqSerrin}
u(p) \geq \frac{2}{n} \cdot \frac{s_k \left( \frac{d(p)}{2}\right)^2}{s_k ' (d(p))}, \quad \forall p \in \textup{cl} (\Omega).
\end{equation}
Moreover, if equality holds at some point $p \in \Omega$, then $u$ attains its unique maximum at $p$, $\Omega$ is a geodesic ball centered at $p$, $u$ depends only on the distance to $p$, and $f(x)= f_k(x)$.
\end{lemma}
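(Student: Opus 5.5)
The plan is to compare $u$ with the Serrin ball solution via the maximum principle, following \cite[Lemma 2.1]{MP}. Fix $p\in\Omega$, set $d=d(p)$, and let $B=B_d(p)\subset\Omega$ be the geodesic ball of radius $d$ centred at $p$. The convexity hypothesis makes $B$ a regular ball: $\exp_p$ is a diffeomorphism onto it and $r_p=\operatorname{dist}(p,\cdot)$ is smooth away from $p$. On $B$ consider
\[
v(x):=\frac{1}{n}\,\frac{s_k'(r_p(x))-s_k'(d)}{k\,s_k'(d)}
\]
for $k\neq0$, and $v=(d^2-r_p^2)/(2n)$ for $k=0$. This is the radial profile of Serrin's problem $\Delta v+nkv+1=0$ in $\mathbb M^n(k)$ vanishing at $r=d$. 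The identity $1-\cos(\sqrt k\,d)=2\sin^2(\sqrt k\,d/2)$ gives
\[
v(p)=\frac{1-s_k'(d)}{nk\,s_k'(d)}=\frac{2}{n}\,\frac{s_k(d/2)^2}{s_k'(d)},
\]
which is exactly the right-hand side of \eqref{ineqSerrin}. For $k>0$ the expression only makes sense when $s_k'(d)>0$, i.e. $d<\bar r_k/2$; I expect convexity to supply this as well.

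Next I show that $v$ is a subsolution in $\mathcal M$. Since $v$ is radially decreasing, $v=\phi(r_p)$ with $\phi'\le0$. The Laplacian comparison theorem under $\operatorname{Ric}\ge(n-1)k$ gives $\Delta r_p\le(n-1)\cot_k(r_p)$, hence
\[
\Delta v=\phi''+\phi'\,\Delta r_p\ \ge\ \phi''+(n-1)\cot_k\phi'=-(nkv+1).
\]
This holds in the barrier sense across the cut locus; by convexity of the ambient domain the cut locus should not meet $B$. Combined with $f(u)\le nku+1$, the difference $w:=v-u$ satisfies $\Delta w+nkw\ge0$ on $B$, and $w\le0$ on $\partial B$ because $v=0$ there and $u>0$.

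The main obstacle is the zero-order term $nkw$ when $k>0$, since then the plain maximum principle does not apply. I would handle it with a generalized maximum principle, using the positive function $h=s_k'(r_p)$ on $B$. This function is positive because $d<\bar r_k/2$, and it satisfies $\Delta h+nkh\le0$ by the same Laplacian comparison, since $(s_k')'=-ks_k\le0$. Dividing, $w/h$ satisfies an elliptic inequality with no zero-order term of the bad sign, so $w\le0$ on $B$. Hence $u(p)\ge v(p)$, and the boundary points are trivial since there $d=0$.

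For the equality case, suppose $u(p)=v(p)$ at an interior point. Then $w$ attains the interior maximum $0$, so the strong maximum principle gives $w\equiv0$ on $B$. Consequently $f(u)=nku+1$ on the range $u(B)=[0,M]$, and $\Delta r_p=(n-1)\cot_k(r_p)$ everywhere on $B$. Equality in the Laplacian comparison makes $B$ isometric to a space-form ball, and $u=v$ is radial about $p$. Continuity then gives $u=0$ on $\partial B$, so $\partial B\subset\partial\Omega$; since $\Omega$ is connected, $\Omega=B$. The unique maximum of $u$ is therefore at $p$, and $f=f_k$ on the relevant range.
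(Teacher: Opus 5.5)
Your strategy is the paper's: compare $u$ on $B_{d(p)}(p)$ with the Serrin ball profile, using $\Delta r_p\le (n-1)\cot_k(r_p)$ and the weak and strong maximum principles. Your $v$ is in fact the right profile; the paper's displayed $W^R$ has $s_k'(r)$ where $s_k'(R)$ is needed. The argument breaks, however, where you use $f(u)\le nku+1$ to get $\Delta w+nkw\ge 0$ for $w=v-u$. Indeed
\[
\Delta w+nkw=\bigl(\Delta v+nkv+1\bigr)+\bigl(f(u)-nku-1\bigr).
\]
The first bracket is $\ge 0$ by Laplacian comparison, but the second is $\le 0$ under the stated hypothesis, so the sum has no sign; you need $f\ge f_k$. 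No proof can close this, because the statement as written is false. In $\mathbb R^n$ ($k=0$) take $f\equiv\varepsilon\in(0,1)$, $\Omega=B_R(0)$ and $u=\varepsilon(R^2-|x|^2)/(2n)$. Then $u(0)=\varepsilon R^2/(2n)$ is smaller than the right-hand side $R^2/(2n)$ of \eqref{ineqSerrin}. The paper's proof makes the same unjustified claim (that $w^R$ is a subsolution ``since $f(x)\le nkx+1$''). Both the inequality and its use in Theorem~\ref{theo:HotSpots}, as a lower bound for $M$ in terms of $r_\Omega$, require $f\ge f_k$ on $[0,M]$, i.e.\ the standard conditions with $f(0)=1$.

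Even with the corrected hypothesis, your device for $k>0$ fails; the paper simply invokes the maximum principle there and never addresses the zero-order term. Since $h=s_k'(r_p)$ is radially decreasing, the same comparison you used for $v$ gives
\[
\Delta h=-k\,s_k'(r_p)-k\,s_k(r_p)\,\Delta r_p\ \ge\ -nk\,h .
\]
So $h$ is a subsolution of $\Delta+nk$, not a supersolution, and $w/h$ satisfies an inequality whose zero-order coefficient $(\Delta h+nkh)/h\ge 0$ has the wrong sign. Under a Ricci lower bound, Laplacian comparison can only certify radially decreasing functions as subsolutions, and Cheng's theorem bounds $\lambda_1(B_d(p))$ only from above. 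The geometry therefore cannot supply the positive supersolution.

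Use $u$ itself instead. If $f\ge f_k$ then $\Delta u+nku\le -1<0$, and $u>0$ on $\overline{B_{d'}(p)}$ for every $d'<d(p)$. Let $v_{d'}$ be the profile vanishing at $r_p=d'$; dividing $v_{d'}-u$ by $u$ gives $v_{d'}\le u$ on $B_{d'}(p)$. Letting $d'\uparrow d(p)$ gives $u\ge v$ on $B_{d(p)}(p)$. The same argument yields $d(p)<\bar r_k/2$ for free, since otherwise $v_{d'}(p)\to+\infty$ as $d'\uparrow \bar r_k/2$, so you need not rely on convexity for that. With these two corrections the rest of your argument goes through, including the rigidity part: the strong maximum principle applies to $w\le 0$ with interior maximum $0$ regardless of the sign of $nk$.
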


\begin{proof}
Fix \(p \in \Omega\), and write \(r_p(q)=\mathrm{dist}(p,q)\) for \(q\in \textup{cl}(\Omega)\). Under \(\mathrm{Ric} \geq (n-1)k\,g\) one has
\[
\Delta r_p(q) \le (n-1)\cot_k(r_p(q)),\qquad q\in \textup{cl}(\Omega).
\]
Let
\[
W^R(r) = \frac{2\left[ s_k\!\left(\frac{R}{2}\right)^2 - s_k\!\left(\frac{r}{2}\right)^2 \right]}{n\, s_k'(r)}, 
\quad r \in [0, R),
\]
be the Serrin's ball profile of radius \(R\) in \(\mathbb{M}^n(k)\). Since \(f(x)\le nkx+1\), the function $w^R:=W^R\circ r_p$ is a subsolution of \eqref{DP} on the geodesic ball \(B_R(p)\). Taking \(R=d(p)\) and using the maximum principle gives \(u\ge w^{d(p)}\) in \(B_{d(p)}(p)\), hence
\[
u(p)\ge w^{d(p)}(p)=\frac{2}{n}\cdot \frac{s_k\!\left(\frac{d(p)}{2}\right)^2}{s_k'(d(p))}.
\]
The rigidity statement follows from the strong maximum principle.
\end{proof}

We denote by
\[
r_{\Omega} = \max \left\{ d(p)=\textup{dist} (p, \partial \Omega) ~\colon~ p \in \Omega \right\}
\]
the inradius of $\Omega$. We can now prove Theorem~\ref{theo:HotSpots}.

\begin{proof}[Proof of Theorem \ref{theo:HotSpots}]
Let $o \in \Omega$ such that $r_{\Omega}= \textup{dist} (o, \partial \Omega)$. 
Developing the inequality of Lemma~\ref{lemma_distanceToBoundary} for $\frac{2}{n} u(o) + k u (o)^2$ and using the the upper bound for $u$ gives
\[
\frac{\tan_k (r_\Omega)^2}{n^2} \leq \frac{2}{n} u(o) + k u (o)^2 \leq \frac{2}{n}M+ kM^2.
\]
Combining this bound with Proposition~\ref{prop_hotSpots} produces the lower bound for the function $ \frac{ n d(p)}{\tan_k(r_\Omega)}$ that yields the equation \eqref{ineqHotSpotsNormalized} .

For rigidity, equality in \eqref{ineqHotSpotsNormalized} implies equality in \eqref{ineqSerrin}, hence $f(x)=nkx+1$, $\Omega$ is a geodesic ball centered at $o$, $u$ depends only on $r_o$, and $\textup{Max}(u)=\{o\}$. In particular, we get that equality can hold only if $(\bar \U, \bar u) \in \textup{Model}_{M,k,f}^+$. Since $\Omega\setminus \textup{Max}(u)$ is connected, equality in \eqref{ineqHotSpots} also holds and the rigidity statement in Theorem~\ref{theo:Gradient} yields the warped product decomposition
\[
g = d\Psi^2+ s_{k} (\Psi)^2 \bar g
\]
on $\Omega$, where $\Psi$ is as in Definition~\ref{def:pseudo-radial} and $\bar g$ is the metric induced on $\Gamma_0$. Since then $\tau(\Omega)=1$, the conclusion follows from Remark~\ref{remark_Gradient}.
\end{proof}

\section{Exotic solutions on  \texorpdfstring{$\mathbb{S}^n$}{Sphere} and on quotients}\label{sec:ExoticSol}

Let $\Omega\subset \mathbb{S}^n$ be a domain with $\mathcal{C}^2$-boundary, and let $f\in \mathcal{C}^1(\mathbb{R})$. We consider the overdetermined elliptic problem (OEP),
\begin{equation}\label{OEP}
\left\{
\begin{array}{rll}
\Delta u + f(u) = 0 & \text{in} & \Omega,\\
u>0 & \text{in} & \Omega,\\
u=0 & \text{on} & \partial\Omega,\\
| \nabla u| = \alpha_i & \text{on} & \Gamma_i \subset \partial\Omega,
\end{array}
\right.
\end{equation}
where $\Delta$ and $\nabla$ denote the Laplace--Beltrami operator and gradient with respect to $g_{\mathbb{S}^n}$, and $\alpha_i$ is constant on each connected component $\Gamma_i$ of $\partial\Omega$.

Unlike space forms of non-positive curvature, the sphere $\s^n$ admits rich isoparametric foliations. In our construction, the level sets of these solutions are the leaves of an isoparametric foliation (see, e.g., \cite{DV}) this allows us to construct exotic solutions of the OEP. Moreover, we show that, whenever a group of isometries preserves the foliation, the construction descends to domains in a smooth quotient $\s^n/G$.

\subsection{Isoparametric Hypersurfaces of \texorpdfstring{$\mathbb{S}^n$}{Sphere}}\label{appendixISO}

Isoparametric hypersurfaces have been extensively studied and provide
natural foliations of the sphere by level sets of a homogeneous polynomial.

A fundamental result in the theory is due to M\"uzner: given an isoparametric
family on $\mathbb S^n$, there exists a homogeneous polynomial
$P:\mathbb R^{n+1}\to\mathbb R$ of degree $\ell\in\{1,2,3,4,6\}$ (the
\emph{Cartan--M\"unzner polynomial}) whose restriction to $\mathbb S^n$
has the leaves of the family as its level sets; see \cite{CR,KFM}.
Moreover, $P$ satisfies
\begin{equation}\label{eq:Cartan}
  \bar{\Delta} P (x) = c \abs{x}^{\ell-2}, \qquad
  |\bar{\nabla} P (x)| = \ell^2 |x|^{2\ell-2},
\end{equation}
where $c=\tfrac{P^2}{2}(m_2-m_1)$, $(m_1,m_2)$ are the principal-curvature
multiplicities, and $\bar{\Delta}$, $\bar{\nabla}$ are the Euclidean Laplacian
and gradient.

\begin{theorem}[Isoparametric hypersurfaces in $\mathbb{S}^n$]\label{theorIsoparametric}
If $M^{n-1}\subset \mathbb{S}^n$ is an isoparametric hypersurface with $l$ distinct principal curvatures and multiplicities $m_1,m_2$, then there exists a homogeneous polynomial $P:\mathbb{R}^{n+1}\to\mathbb{R}$ of degree $\ell$ (the Cartan--M\"unzner polynomial) such that $M=P^{-1}(0)\cap\mathbb{S}^n$. Moreover, $P$ satisfies \eqref{eq:Cartan}.

Conversely, if $P$ is a homogeneous polynomial of degree $\ell$ satisfying \eqref{eq:Cartan}, then each regular level set of $P|_{\mathbb S^n}$ is an isoparametric hypersurface with $\ell$ distinct principal curvatures.
\end{theorem}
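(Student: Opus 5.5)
Theorem~\ref{theorIsoparametric} is classical, due to Cartan and M\"unzner, and in the paper it is quoted from \cite{CR,KFM}. Here I only outline the standard route I would follow.

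\emph{Direct implication.} Let $M^{n-1}\subset\mathbb S^n$ be isoparametric with constant principal curvatures $\cot\theta_1>\dots>\cot\theta_\ell$ and multiplicities $m_i$.
\begin{enumerate}
\item Build the parallel family $M_t=\{\cos t\,x+\sin t\,\nu(x)\}$. Using the Jacobi field formula, its principal curvatures are $\cot(\theta_i-t)$, so they stay constant and the family foliates $\mathbb S^n$ off two focal submanifolds $M_\pm$.
\item Apply Cartan's identity $\sum_{j\neq i} m_j\frac{1+\kappa_i\kappa_j}{\kappa_i-\kappa_j}=0$ (proved from the Codazzi equations and constancy of the curvatures) together with M\"unzner's topological argument. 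The algebraic form here should be the focal-set decomposition of $\mathbb S^n$ into two disc bundles over $M_\pm$, computed via cohomology. These give that $\theta_i=\theta_1+(i-1)\pi/\ell$, that $m_i=m_{i+2}$ (so at most two multiplicities $m_1,m_2$), and that $\ell\in\{1,2,3,4,6\}$.
\item Let $t(x)$ be the oriented distance from $x\in\mathbb S^n$ to $M$, and set $V(x)=\cos(\ell\, t(x))$. Extend it homogeneously by $P(x)=|x|^\ell V(x/|x|)$.
\item The equal spacing of the $\theta_i$ makes $\Delta_{\mathbb S^n}V$ and $|\nabla V|^2$ functions of $V$ alone. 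Use the mean curvature of $M_t$, namely $\sum_i m_i\cot(\theta_i-t)$, in the formula $\Delta t=-(n-1)H_t$ and sum the cotangents via trigonometric identities.
\item Convert to Euclidean quantities with $\bar\Delta P=|x|^{\ell-2}(\Delta_{\mathbb S^n}V+\ell(\ell+n-1)V)$ and $|\bar\nabla P|^2=|x|^{2\ell-2}(|\nabla V|^2+\ell^2V^2)$. This yields \eqref{eq:Cartan}: the gradient equation $|\bar\nabla P|^2=\ell^2|x|^{2\ell-2}$, with the exponent referring to the squared gradient, and the Laplacian equation with constant $c=\tfrac{\ell^2}{2}(m_2-m_1)$.
\end{enumerate}

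\emph{Main obstacle.} The hardest step is proving that $P$ is actually a polynomial. My plan is to show $P$ is real-analytic on $\mathbb R^{n+1}$, including across the focal sets, where $t$ fails to be smooth but $\cos(\ell t)$ is smooth. Then I would use the ODE structure along normal geodesics, a Taylor expansion of degree $\ell$, to conclude that $P$ is a homogeneous polynomial of degree $\ell$.

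\emph{Converse.} Assume $P$ satisfies \eqref{eq:Cartan}, and let $V=P|_{\mathbb S^n}$.
\begin{enumerate}
\item The two equations give $|\nabla V|^2=\ell^2(1-V^2)$ and $\Delta V=c-\ell(\ell+n-1)V$.
\item The first relation says $\nabla V/|\nabla V|$ has geodesic integral curves. Hence regular level sets are parallel, and $V=\cos(\ell t)$ in terms of the distance $t$ to a fixed level.
\item The second relation then says every level set has constant mean curvature, so the level sets form a transnormal family with constant mean curvature.
\item By Cartan's classical argument, a parallel family with constant mean curvature has constant principal curvatures: compare the power sums $\sum_i \kappa_i(t)^j$ obtained by differentiating the mean curvature in $t$. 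So each regular leaf is isoparametric.
\item Counting the number of zeros of $\cos(\ell t)$ over a normal geodesic of length $2\pi$ shows there are exactly $\ell$ distinct principal curvatures.
\end{enumerate}
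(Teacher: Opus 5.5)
The paper does not prove this theorem; it quotes it from \cite{CR,KFM}. Your outline follows the classical Cartan--M\"unzner route, and you read \eqref{eq:Cartan} correctly as $|\bar\nabla P|^2=\ell^2|x|^{2\ell-2}$ with $c=\frac{\ell^2}{2}(m_2-m_1)$. However, step~3 of the direct implication uses the wrong function. $V$ must be $\cos(\ell s)$, where $s$ is the distance to a \emph{focal submanifold} $M_+$ lying at distance $\theta_1$ from $M$; this is exactly \eqref{eq:rho-cos}. In your notation that is $V=\cos(\ell(\theta_1-t))$, not $\cos(\ell t)$. Your choice fails in three ways.
\begin{itemize}
\item $\nabla V=-\ell\sin(\ell t)\nabla t$ vanishes on $M$, so $M$ becomes a critical level.
\item Near $M_+$ one has $t=\theta_1-s$, so $\cos(\ell t)=\cos(\ell\theta_1)\cos(\ell s)+\sin(\ell\theta_1)\sin(\ell s)$ with $\sin(\ell\theta_1)\neq 0$. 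The last term behaves like the distance to $M_+$, so $V$ is not smooth across the focal set, contrary to your main-obstacle paragraph.
\item Step~4 breaks down: the cotangent sum is a function of $\ell(\theta_1-t)$, so $\Delta V$ is not a function of $V$. For a small sphere of radius $\theta_1\neq\pi/2$ ($\ell=1$) you get $\Delta V=-\cos t+(n-1)\sin t\,\cot(\theta_1-t)$, which differs at $t$ and $-t$.
\end{itemize}
After the correction, $M=P^{-1}(\cos\ell\theta_1)\cap\mathbb{S}^n$. The zero level is the particular leaf at distance $\pi/(2\ell)$ from $M_\pm$, so the $P^{-1}(0)$ in the statement can only mean \emph{some} level $P^{-1}(a)$ with $a\in(-1,1)$. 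Your outline does not address this.

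The second gap is the polynomiality step. Real-analyticity of a degree-$\ell$ homogeneous function at the origin is equivalent to its being a polynomial, so your plan presupposes the conclusion. Smoothness away from $0$ gives nothing, since $|x|^\ell\varphi(x/|x|)$ is smooth for any smooth $\varphi$. Knowing that $P=r^\ell\cos(\ell(\phi-\phi_0))$ on each normal plane $\mathrm{span}\{x,\nu(x)\}$ does not propagate to a global polynomial either. The missing idea is to use the equation itself:
\begin{itemize}
\item The corrected $V$ is smooth on all of $\mathbb{S}^n$, because near $M_\pm$ it is an even function of the distance.
\item It satisfies $\Delta_{\mathbb{S}^n}V+\ell(\ell+n-1)V=c$. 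So $V-a$, with $a=c/(\ell(\ell+n-1))$, is an eigenfunction for the eigenvalue $\ell(\ell+n-1)$, i.e.\ the restriction of a harmonic homogeneous polynomial $Q$ of degree $\ell$.
\item Hence $P=Q+a|x|^\ell$. This is a polynomial trivially for $\ell$ even. For $\ell$ odd, $m_i=m_{i+2}$ forces $m_1=m_2$, so $a=0$.
\end{itemize}
The relation $m_i=m_{i+2}$, like the equal spacing, comes from M\"unzner's symmetry $A_{-\xi}=-A_\xi$ of the shape operators of $M_\pm$. It does not come from the cohomological argument, which only gives $\ell\in\{1,2,3,4,6\}$ and is not needed here.

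Finally, step~5 of the converse never uses the value of $c$, and it cannot work without it. Take $P=|x|^2-y^2$ on $\mathbb{R}^n\times\mathbb{R}$. It satisfies $|\bar\nabla P|^2=4(|x|^2+y^2)$ and $\bar\Delta P=2(n-1)$, yet its regular levels on $\mathbb{S}^n$ are pairs of small spheres with a single principal curvature. Critical points of $V$ on a normal circle are focal points only if $V^{-1}(\pm1)$ contains no hypersurface piece. Such a piece forces $c=\pm\ell(n-1)$, which under $c=\frac{\ell^2}{2}(m_2-m_1)$ and $n-1=\frac{\ell}{2}(m_1+m_2)$ means a vanishing multiplicity. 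That is where the hypothesis on $c$ must enter your count.
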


We recall two facts that are repeatedly used in the isoparametric reduction. First, the level-set function $s=\arccos(P|_{\mathbb S^n})/\ell$ is smooth on the complement of the focal varieties and satisfies
\begin{equation}\label{relationsV}
  |\nabla s|=1, \qquad
  \Delta s=(n-1)\cot(\ell s)-\frac{c}{\ell\sin(\ell s)},
\end{equation}
where $\nabla,\Delta$ are computed with respect to the round metric of $\mathbb S^n$. We can list some of the main examples of isoparametric hypersurfaces as: 

\noindent\textbf{(A) The case $\ell=1$.}
The only possibility is the totally geodesic sphere $\mathbb S^{n-1}\subset \mathbb S^n$.

\smallskip
\noindent\textbf{(B) The case $\ell=2$.}
Up to congruence, the hypersurfaces are the products of spheres $\mathbb S^{m_1}\bigl(\sqrt{(1+c)/2}\bigr)\times \mathbb S^{m_2}\bigl(\sqrt{(1-c)/2}\bigr)\subset\mathbb S^n$, with $(m_1,m_2)=(p,q)$ and $p+q=n-1$. One can take
\[
P(x,y)=|x|^2-|y|^2,\qquad (x,y)\in\mathbb R^{p+1}\times\mathbb R^{q+1},
\]
so that $c=m_2-m_1$.

\smallskip
\noindent\textbf{(C) The case $\ell=3$.}
These are the Cartan hypersurfaces, with equal multiplicities
$m_1=m_2=m\in\{1,2,4,8\}$, arising from the isoparametric representations
associated with the real, complex, quaternionic and octonionic projective planes;
see \cite{CR}.

\smallskip
\noindent\textbf{(D) The case $\ell=4$.}
There are homogeneous families (classified by Hsiang--Lawson) and also
inhomogeneous ones; the complete classification is known. For the purposes of
this paper, we only use that $\ell=4$ families exist for many pairs $(m_1,m_2)$;
see \cite{CR,KFM}.

\smallskip
\noindent\textbf{(E) The case $\ell=6$.}
All known examples are homogeneous and are conjectured to exhaust all cases.
Partial results include \cite{Nomizu71,Miyaoka93} (see also \cite{KFM,Wang}).

\subsection{Construction along an isoparametric foliation of  \texorpdfstring{$\mathbb{S}^n$}{Sphere}}

Let $\{\Gamma_r\}_{r\in[-1,1]}$ be an isoparametric family in $\s^n$, given by the restriction of a Cartan--M\"unzner polynomial $\rho\in\mathcal C^\omega(\s^n)$ via $\Gamma_r:=\rho^{-1}(r)$. Let $\ell\in\{1,2,3,4,6\}$ be the number of distinct principal curvatures and let $m_1,m_2$ denote their multiplicities (alternating when $\ell$ is even, coincident when $\ell$ is odd). Along a normal geodesic to a focal submanifold, the Cartan--M\"unzner theory (see e.g.\ \cite[\textsection 4.1]{DV}) yields
\begin{equation}\label{eq:rho-cos}
  \rho(p)=\cos\!\big(\ell\, s(p)\big),\qquad s(p):={\rm dist}_{\sp^n}(p,\Gamma_1).
\end{equation}
Let $f\in \mathcal{C}(\r)$ be a continuous real function. Seeking $z=Z\circ s$, a direct computation shows that $z$ solves the PDE in 
\eqref{DP} if and only if $Z$ solves
\begin{equation}\label{eq:ODE-isop}
  Z''(s) + \left( (n-1)\cot(\ell s) - \frac{c}{\ell\sin(\ell s)} \right)Z'(s) + f\big(Z(s)\big)=0,
\end{equation}
where $c=\tfrac{\ell^2}{2}(m_2-m_1)$. We prescribe the Cauchy data
\begin{equation}\label{eq:Cauchy-isop}
  Z(S)=M>0,\qquad Z'(S)=0,\qquad S\in[0,\pi/\ell].
\end{equation}
We want a solution to \eqref{eq:ODE-isop}--\eqref{eq:Cauchy-isop} to produce a pair $(\Omega,u)$ solving the overdetermined problem \eqref{OEP} in $\s^n$. This leads us to the following definition, extending the notion of $k$-admissibility:

\begin{definition}[Admissible nonlinearities]\label{def:admissibleIsoparametric-f}
A locally Lipschitz function $f\in \textup{Lip}_{loc}(\mathbb{R})$ is called \emph{Iso-admissible} if there exist non-empty connected intervals 
	\[
	0 \in  \mathcal{S}_f \subset [0,\pi/\ell]
	\quad\text{and}\quad
 \mathcal{I}_f \subset \r_+ \text{ open with } 0 \in \partial  \mathcal{I}_f, 
	\]
	such that, for every \((S,M)\in \mathcal S_f\times \mathcal I_f\), the unique maximal solution \(Z_{S,M,f}\) of \eqref{eq:ODE-isop}--\eqref{eq:Cauchy-isop} is defined
	on an interval that contains a closed interval \([s_-(S,M,f),s_+(S,M,f)]\) with
	$$
	0\le s_-(S,M,f)\leq S<s_+(S,M,f)\le \pi/\ell,
	$$
	and satisfies:
	\begin{itemize}
		\item If $S=0$, then $s_-(0,M,f) = 0 < s_+(0,M,f)$, 
		\begin{itemize}
			\item $Z_{0,M,f} > 0$  on $ [0,s_+(0,M,f))$, 
			\item $Z'_{0,M,f} \neq 0 $  on $ (0,s_+(0,M,f))$, and 
			\item $Z_{0,M,f}(s_+(0,M,f)) = 0$.
		\end{itemize}
		
		\item If $S>0$, then $ 0 < s_-(S,M,f) < S < s_+(S,M,f) $, 
		\begin{itemize}
			\item $Z_{S,M,f} > 0$  on $(s_-(S,M,f),s_+(S,M,f))$, 
			\item $Z'_{S,M,f} \neq 0$  on $(s_-(S,M,f),s_+(S,M,f)) \setminus \{S \}$, and 
			\item $Z_{S,M,f}(s_\pm(S,M,f)) = 0$.
		\end{itemize}
	\end{itemize}
\end{definition}

Standard arguments, presented in Appendix~\ref{app:isoparametric-ode}, on ODE and focal geometry allow us to show:

\begin{lemma}\label{lemma_exist-ODE-isop} 
Let $f \in \textup{Lip}_{loc} (\r)$ and $\mathcal{I}_f \subset \r_+$ an open interval with $0 \in \partial \mathcal{I}_f$ such that $f>0$ on $\mathcal{I}_f$. Then $f$ is Iso-admissible with $\mathcal{S}_f = [0, \pi/\ell]$.
\end{lemma}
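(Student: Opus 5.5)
The plan is to put \eqref{eq:ODE-isop} in divergence form, so that positivity of $f$ on $\mathcal I_f$ makes the flux $w Z'$ strictly monotone. Let $w(s)$ be the $(n-1)$-dimensional volume density of the parallel leaf $\Gamma_{\cos(\ell s)}$. By the Cartan--M\"unzner theory, up to a positive constant,
\[
w(s)=\sin\!\big(\tfrac{\ell s}{2}\big)^{m_1}\cos\!\big(\tfrac{\ell s}{2}\big)^{m_2}\quad\text{on }(0,\pi/\ell),
\]
and a direct check using \eqref{relationsV} gives $w'/w=(n-1)\cot(\ell s)-c/(\ell\sin(\ell s))$. Equation \eqref{eq:ODE-isop} then reads
\[
\big(w Z'\big)'=-w\,f(Z).
\]
The key structural facts are that $w>0$ on $(0,\pi/\ell)$ and that $w$ vanishes at the focal endpoints, like $s^{m_1}$ at $0$ and like $(\pi/\ell-s)^{m_2}$ at $\pi/\ell$, with $m_1,m_2\ge 1$. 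Since $\mathcal I_f$ is an interval with $0\in\partial\mathcal I_f$ and $M\in\mathcal I_f$, we have $(0,M]\subset\mathcal I_f$. Hence $f(Z)>0$ as long as $0<Z\le M$.

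\textbf{Case $S\in(0,\pi/\ell)$.} The equation is regular near $S$, so Picard--Lindel\"of with $f$ locally Lipschitz gives a unique maximal solution. Set $\Phi:=wZ'$, so $\Phi(S)=0$. While $0<Z\le M$, $\Phi$ is strictly decreasing, so $\Phi<0$ to the right of $S$ and $\Phi>0$ to the left. Consequently $Z<M$ away from $S$, $Z$ stays in $(0,M]$ until it first vanishes, and $Z'\neq 0$ on that range minus $\{S\}$.

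To see that $Z$ does vanish before the focal endpoint $\pi/\ell$, argue by contradiction. If $Z>0$ on $(S,\pi/\ell)$, then for $s\ge S+\eta$ we have $\Phi(s)\le\Phi(S+\eta)=:-\delta<0$, so $Z'\le-\delta/w$. Since $\int^{\pi/\ell}w^{-1}=+\infty$ (because $m_2\ge1$), this forces $Z\to-\infty$, a contradiction. Therefore $s_+(S,M,f)<\pi/\ell$ exists with $Z(s_+)=0$. The same argument toward $0$, using $m_1\ge1$, produces $s_->0$.

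\textbf{Case $S=0$ (Cauchy data on the focal submanifold).} Here the ODE is singular, so I would solve the integral equation
\[
Z(s)=M-\int_0^s\frac{1}{w(t)}\int_0^t w(\sigma)f(Z(\sigma))\,d\sigma\,dt
\]
by a contraction argument on $C[0,\varepsilon]$. The operator is well defined and contractive for small $\varepsilon$ because
\[
\frac{1}{w(t)}\int_0^t w\sim\frac{t}{m_1+1}.
\]
The fixed point satisfies $Z'(0)=0$ and $Z''(0)=-f(M)/(m_1+1)$; it is the standard regular solution, which lifts to a $\mathcal C^2$ function on $\mathbb S^n$ across the focal variety. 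Uniqueness within this regular class comes from the same contraction. After that, the monotonicity and divergence argument of the previous case applies verbatim on $(0,\pi/\ell)$ and gives $s_+(0,M,f)<\pi/\ell$ with $Z>0$ and $Z'<0$ on $(0,s_+)$. The endpoint $S=\pi/\ell$ is handled symmetrically: replace $s$ by $\pi/\ell-s$, which swaps $m_1$ and $m_2$ and corresponds to the other focal submanifold. I would therefore read the conclusion there with the roles of $s_\pm$ interchanged, and this convention has to be matched against Definition~\ref{def:admissibleIsoparametric-f}.

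\textbf{Main obstacle.} I expect the main difficulty to be this singular Cauchy problem at the focal endpoints. Two things need care: local existence and uniqueness of the regular solution when $f$ is only locally Lipschitz, and the claim that $z=Z\circ s$ is a genuine $\mathcal C^2$ solution across the focal submanifold, so that the resulting pair $(\Omega,u)$ solves \eqref{OEP}. By contrast, the divergence and monotonicity step is robust. It uses only $f>0$ on $(0,M]$ and the non-integrability of $1/w$ at the focal points, which holds because the multiplicities are at least $1$.
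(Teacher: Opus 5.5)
Your proposal is correct and follows essentially the paper's route: the divergence form with a positive weight (your $w$ is, up to normalization, the paper's integrating factor $\mu$), strict monotonicity of the flux $wZ'$ from $f>0$ on $(0,M]\subset\mathcal I_f$, and a fixed-point argument for the singular Cauchy problem at the focal endpoints. Your details are in fact slightly sharper than the paper's: the Banach contraction (available because $f$ is locally Lipschitz) also gives the uniqueness that Definition~\ref{def:admissibleIsoparametric-f} presupposes, whereas the paper's Schauder argument yields only existence; the non-integrability of $1/w$ at the focal points makes explicit the existence of $s_\pm$ that the paper delegates to \cite{EMa2}; your asymptotics $w\sim s^{m_1}$ are the correct ones (the paper writes $s^{\ell m_1}$, which is harmless there); and your caveat at $S=\pi/\ell$ is justified, since the requirement $S<s_+\le\pi/\ell$ cannot hold there and the paper tacitly uses the interchanged convention when it defines $\Omega_{-1,M,f}$ via $s_-(\pi/\ell,M,f)$.
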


Then, writing $R= \cos(\ell S)\in[-1,1]$ for $S \in [0,\pi/\ell]$, the function $z_{R,M,f}:=Z_{S,M,f}\circ s$ defined on 
\[
\Omega_{R,M,f}:= \left\{ \begin{matrix} 
\{\,p\in\s^n:\  s(p) < s_+(0,M,f)\,\} & \text{ if } R = 1 ,\\[3mm]
\{\,p\in\s^n:\  s(p) > s_-(\pi/\ell,M,f)\,\} & \text{ if } R = -1 ,\\[3mm]
\{\,p\in\s^n:\ s_-(S,M,f)< s(p) < s_+(S,M,f)\,\} & \text{ if } R \in (-1 ,1),
\end{matrix}\right.
\]
solves \eqref{OEP} in $\Omega_{R,M,f}$ with $(z_{R,M,f})_{\max}=M$ and ${\rm Max}(z_{R,M,f})=\Gamma_R$.

\begin{theorem}[Exotic solutions on $\s^n$]\label{cor:exotic-sphere}
Let $f\in \operatorname{Lip}_{loc}(\r_+^*)$ be a positive function on $\r_+^*$. For every isoparametric family 
$\{\Gamma_r\}\subset\s^n$, every $R\in[-1,1]$ and every $M>0$, there exists an $f$-extremal domain
 $(\Omega_{R,M,f},z_{R,M,f})$ for \eqref{OEP} whose level sets are the leaves of $\{\Gamma_r\}$. 
 In particular, when the leaves are not umbilic (e.g.\ Cartan’s families with $\ell=3$), 
 these are non-rotational (exotic) solutions.
\end{theorem}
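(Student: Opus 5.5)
The plan is to reduce the statement to Lemma~\ref{lemma_exist-ODE-isop} combined with the isoparametric reduction \eqref{eq:ODE-isop}. Fix an isoparametric family $\{\Gamma_r\}$ with Cartan--M\"unzner function $\rho$, number of principal curvatures $\ell$ and multiplicities $(m_1,m_2)$. Fix also $R\in[-1,1]$ and $M>0$, and write $S\in[0,\pi/\ell]$ with $R=\cos(\ell S)$. Since $f$ is positive on $\mathbb R_+^*$, we take $\mathcal I_f:=(0,+\infty)$. This is an open interval with $0\in\partial\mathcal I_f$ on which $f>0$. A locally Lipschitz extension of $f$ to $\mathbb R$, for instance by $f(0^+)$ or by any Lipschitz continuation near $0$, does not affect the solutions on the region where $Z>0$. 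Lemma~\ref{lemma_exist-ODE-isop} then shows that $f$ is Iso-admissible with $\mathcal S_f=[0,\pi/\ell]$. So for our pair $(S,M)$ there is a profile $Z_{S,M,f}$ that is positive on $(s_-,s_+)$, vanishes at $s_\pm$, and is strictly monotone away from $S$.

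Next I would lift the profile. Set $z=Z_{S,M,f}\circ s$ on $\Omega_{R,M,f}$. By \eqref{relationsV}, $|\nabla s|=1$ and $\Delta s=(n-1)\cot(\ell s)-\frac{c}{\ell\sin(\ell s)}$ off the focal varieties. The chain rule $\Delta z=Z''(s)|\nabla s|^2+Z'(s)\Delta s$ together with \eqref{eq:ODE-isop} then gives $\Delta z+f(z)=0$ there.

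\emph{The step I expect to be the main obstacle} is regularity across the focal submanifolds $\{s=0\}$ and $\{s=\pi/\ell\}$. These are contained in $\Omega_{R,M,f}$ exactly when $R=\pm1$, that is, in the centered cases; for $R\in(-1,1)$ the focal sets lie outside $\overline{\Omega_{R,M,f}}$ because $0<s_-<s_+<\pi/\ell$.

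For the centered cases I would argue as follows. Near $s=0$ the coefficient of $Z'$ in \eqref{eq:ODE-isop} behaves like $m/s$, where $m$ is the codimension minus one of the focal manifold. The solution with $Z'(0)=0$ is then even in $s$ with $Z(s)=M-\frac{f(M)}{m+1}s^2+o(s^2)$. Equivalently, $Z$ is a $\mathcal C^2$ function of $\rho=\cos(\ell s)$ near $\rho=1$. Since $\rho$ is the restriction of a polynomial, $z$ is $\mathcal C^{1,1}$ across the focal set and is a weak, hence by elliptic regularity classical, solution there. Alternatively, the focal set has codimension $\ge2$ and $z$ is $\mathcal C^1$ and bounded, so it is a removable set for the equation.

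Finally I would check the overdetermined condition and the exotic claim. On each boundary component $\{s=s_\pm\}$ we have $|\nabla z|=|Z'(s_\pm)|$, which is constant; this is nonzero by uniqueness for the ODE Cauchy problem, since $Z=Z'=0$ would force $f(0)$ to be incompatible with positivity nearby, or by the Hopf lemma. Moreover $z>0$ inside, $z=0$ on $\partial\Omega_{R,M,f}$, $\max z=M$, and $\mathrm{Max}(z)=\Gamma_R$ by monotonicity. Hence $(\Omega_{R,M,f},z_{R,M,f})$ is an $f$-extremal domain whose level sets are leaves $\{s=\mathrm{const}\}=\Gamma_r$.

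If the leaves are not totally umbilical, as in Cartan's families with $\ell=3$ whose leaves have three distinct principal curvatures, then no rotational solution can have them as level sets. Indeed, level sets of a rotationally symmetric function about a point are geodesic spheres, which are umbilic. Therefore these solutions are non-rotational.
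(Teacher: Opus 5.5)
Your proposal is correct and follows the paper's route: the theorem comes directly from Lemma~\ref{lemma_exist-ODE-isop} with $\mathcal I_f=(0,+\infty)$, together with the lifting $z_{R,M,f}=Z_{S,M,f}\circ s$, which solves \eqref{OEP} by \eqref{relationsV}. Your extra checks (regularity across the focal submanifolds when $R=\pm1$, constancy of the Neumann data, and the umbilicity argument for non-rotationality) fill in details the paper leaves implicit; note that extending $f$ to a locally Lipschitz function on $\mathbb R$ tacitly requires $f$ to be Lipschitz on some interval $(0,\delta)$, an assumption the paper also makes implicitly, and that the quadratic coefficient in your expansion should be $f(M)/(2(m+1))$, a slip that does not affect the argument.
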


\begin{remark}\label{rk:Shk-Savo}
For $f(u)=\lambda u$ or $f\equiv1$, the existence of extremal domains associated to isoparametric foliations 
(including cases where the top level set is a focal submanifold) is classical;
see Shklover \cite{Shk} and Savo \cite{Savo}. Our construction yields the same phenomenon for general positive $f$.
\end{remark}

\subsection{Descent to quotients}\label{subsec:descend}

Let $G\le\mathrm{Iso}(\s^n)$ be a closed subgroup and $\{\Gamma_r\}$ an isoparametric family. We say that $G$ \emph{preserves} the foliation if $\rho\circ\xi=\rho$ for all $\xi\in G$. Fix an interval of regular leaves $[a,b]\subset(-1,1)$ on which $G$ acts freely. Then the quotient map $\pi:\s^n\to\s^n/G$ sends each leaf $\Gamma_r$ to a smooth embedded hypersurface $\pi(\Gamma_r)$ in $\s^n/G$, and 
\[
\Omega^\Gamma_{[a,b]}:=\bigcup_{r\in[a,b]}\Gamma_r/G
\]
is a smooth domain in $\s^n/G$ with boundary $\pi(\Gamma_a)\cup\pi(\Gamma_b)$.

\begin{theorem}[Exotic solutions on smooth quotients]\label{thm:descend-quotient}
Assume $G$ preserves $\{\Gamma_r\}$ and acts freely on $\Gamma_r$ for all $r\in[a,b]$. Let $(\Omega_{R,M,f},z_{R,M,f})$ be an $f$-extremal domain as above with $\max(z_{R,M,f})=\Gamma_R$ for some $R\in[a,b]$. Then $z_{R,M,f}$ is $G$-invariant and descends to a function $u_G$ on $\Omega^\Gamma_{[a,b]}$ that solves \eqref{OEP} in the quotient, with the same Dirichlet data and a constant Neumann datum on each connected component of the boundary.
\end{theorem}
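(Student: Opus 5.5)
The plan is to exploit that $z_{R,M,f}=Z_{S,M,f}\circ s$ is a function of the leaf parameter alone, so $G$-invariance and descent reduce to elementary facts about Riemannian submersions.

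\emph{First step: $G$-invariance.} Since $G$ preserves the foliation, $\rho\circ\xi=\rho$ for every $\xi\in G$. By \eqref{eq:rho-cos}, $s=\arccos(\rho)/\ell$, and $\arccos$ is injective on $[-1,1]$, so $s\circ\xi=s$. Hence
\[
z_{R,M,f}\circ\xi=Z_{S,M,f}\circ s\circ\xi=z_{R,M,f}.
\]
The same identity shows that $\Omega_{R,M,f}$ is a union of leaves and is therefore $G$-invariant. Here I take $[a,b]$ to be the interval of $\rho$-values for which the leaves lie in $\mathrm{cl}(\Omega_{R,M,f})$, that is, $\{\cos(\ell s_\pm)\}=\{a,b\}$. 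If the domain instead corresponds to a sub-interval, I will simply restrict to it.

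\emph{Second step: the quotient structure.} $G$ is a closed subgroup of the compact group $\mathrm{Iso}(\s^n)$, so it is compact, and it acts freely and isometrically on the $G$-invariant open set $\mathcal V:=\rho^{-1}((a-\varepsilon,b+\varepsilon))$. Freeness on nearby leaves follows because isotropy groups are upper semicontinuous for compact actions, so points with trivial isotropy form an open set. By the slice theorem, $\pi:\mathcal V\to\mathcal V/G$ is then a smooth principal $G$-bundle. Giving $\mathcal V/G$ the quotient metric makes $\pi$ a Riemannian submersion. The descended function $u_G$ defined by $u_G\circ\pi=z$ is well defined and smooth, because $z$ is invariant and $\pi$ has local smooth sections.

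\emph{Third step: the equation (main obstacle).} The key point is the Laplacian. For a Riemannian submersion and a basic function $h=\tilde h\circ\pi$, one has
\[
\Delta h=(\Delta\tilde h)\circ\pi-\langle \mathbf H,\nabla h\rangle,
\]
where $\mathbf H$ is the mean curvature vector of the fibers, the $G$-orbits. So $\Delta u_G=\Delta z$ holds exactly when $\mathbf H$ is orthogonal to $\nabla s$.

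I would first try to avoid computing $\mathbf H$ at all, using the divergence/volume-form argument. Since the fibers are homogeneous orbits of isometries, the fiber volume $v(x)=\mathrm{vol}(G\cdot x)$ is $G$-invariant and $\mathbf H=-\nabla\log v$. It therefore suffices that $v$ is constant along the normal geodesics to the leaves. If that fails, the fallback is to define $u_G$ on the quotient leaves and verify the ODE directly, using that $\pi(\Gamma_r)$ are the level sets of the descended distance function $\bar s$, with $|\nabla\bar s|=1$ and $\Delta\bar s$ computed from the mean curvature of $\pi(\Gamma_r)$. I expect this step to need the most care. Honestly, $\Delta u_G+f(u_G)=0$ may require the extra structure that $G$ acts compatibly with the normal geodesic flow, which holds here since isometries preserving $\rho$ commute with the normal exponential map of the leaves. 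This gives $v(\gamma(t))$ independent of the base point, though not necessarily independent of $t$, so this is the point to check carefully.

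\emph{Boundary data.} Given the equation, the boundary conditions follow quickly. Positivity and $u_G=0$ on $\pi(\Gamma_a)\cup\pi(\Gamma_b)$ are inherited directly from $z$. Because $\pi$ is a submersion and $\nabla z$ is horizontal (it is normal to the leaves, which contain the orbits), $|\nabla u_G|\circ\pi=|\nabla z|=|Z'(s_\pm)|$. This is constant on each boundary component, which gives the constant Neumann datum.
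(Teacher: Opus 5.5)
Your first step, the construction of $u_G$, and your boundary paragraph are exactly the content of the paper's proof. There, $z_{R,M,f}=Z\circ s$ with $\rho=\cos(\ell s)$ $G$-invariant, so $z_{R,M,f}$ descends, and $u_G$ depends only on the leaf parameter, which gives a constant Neumann datum on $\pi(\Gamma_a)$ and $\pi(\Gamma_b)$.

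\textbf{The gap is your third step.} You never establish $\Delta u_G+f(u_G)=0$ for the quotient metric. Your criterion is the right one: the equation descends iff the orbit mean curvature $\mathbf H=-\nabla\log v$ is orthogonal to $\nabla s$, i.e.\ $\partial_s v=0$. But this condition genuinely fails under the stated hypotheses, so the step cannot be closed.

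Here is a counterexample. Take $\mathbb{S}^3\subset\mathbb{C}^2$ with the $\ell=2$ family $\rho(x,y)=|x|^2-|y|^2$, so $m_1=m_2=1$, $c=0$ and $s=\arccos|x|$. Let $G=\{(x,y)\mapsto(e^{i\theta}x,y)\}\cong\mathbb{S}^1$.
\begin{itemize}
\item $G$ is closed, preserves $\rho$, and acts freely on every regular leaf, which is all the theorem assumes.
\item The map $(x,y)\mapsto(|x|,y)$ is a Riemannian submersion of $\{x\neq0\}$ onto the open round hemisphere of $\mathbb{S}^2$.
\item $s$ descends to the distance $\phi$ from the pole.
\item The orbits are circles of length $v=2\pi|x|=2\pi\cos\phi$.
\end{itemize}
Hence
\[
0=\Delta_{\mathbb{S}^3}z_{R,M,f}+f(z_{R,M,f})=Z''+(\cot\phi-\tan\phi)Z'+f(Z)=\Delta_{\mathbb{S}^2}u_G-\tan\phi\,\partial_\phi u_G+f(u_G).
\]
So $\Delta_{\mathbb{S}^2}u_G+f(u_G)=\tan\phi\,Z'(\phi)\neq0$ away from the top leaf. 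Thus $u_G$ solves a drift equation on the quotient, not \eqref{OEP}.

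What is missing is therefore a hypothesis, not an argument. The condition $\partial_s v=0$ holds in two natural cases:
\begin{itemize}
\item When $G$ is finite: $\pi$ is then a Riemannian covering, hence a local isometry, and your Step 3 is immediate.
\item When the orbits are totally geodesic, as for the Hopf actions producing $\mathbb{CP}^m$ and $\mathbb{HP}^m$.
\end{itemize}
Under such an assumption, your Steps 1--3 plus the boundary paragraph form a complete proof, and it is more careful than the paper's.

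The paper's proof never checks the equation on the quotient; it only records $G$-invariance, smooth descent and the constant Neumann datum. So the point you flagged is an omission in the statement itself. It is harmless for the examples of Remark~\ref{ex:lens-projective} (lens spaces, $\mathbb{RP}^n$, $\mathbb{CP}^m$, $\mathbb{HP}^m$), all of which satisfy the extra condition.
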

\begin{proof}
Since $z_{R,M,f}(p)=Z(s(p))$ depends only on $s$ and $\rho=\cos(\ell s)$ is $G$-invariant, it follows that $z_{R,M,f}$ is invariant under $G$. Hence $z_{R,M,f}$ is constant along the $G$-orbits and defines a smooth function $u_G$ on the smooth quotient domain $\Omega^\Gamma_{[a,b]}$. The boundary components of $\Omega^\Gamma_{[a,b]}$ are the projected leaves $\pi(\Gamma_a)$ and $\pi(\Gamma_b)$, and $u_G$ depends only on the leaf parameter; in particular, its normal derivative is constant on each boundary component, as claimed.
\end{proof}

\begin{remark}[Lens spaces and projective spaces]\label{ex:lens-projective}
(1) If $n=2m+1$ and the foliation is invariant under the Hopf $\mathbb{S}^1$-action on $\s^{2m+1}$, then it descends to an isoparametric foliation on $\mathbb{CP}^m$ (and similarly from $\s^{4m+3}$ to $\mathbb{HP}^m$) \cite[Prop.\ 8.1]{DV}. Hence for any interval $[a,b]$ of regular leaves in $\s^{2m+1}$, the corresponding exotic solutions descend to $\mathbb{CP}^m$ and further to any lens space $L(p,q)=\s^{2m+1}/\Z_p$ (since the $\Z_p$-action is free).

(2) For the antipodal quotient $\mathbb{RP}^n=\s^n/\{\pm1\}$, an isoparametric family descends precisely when $\ell$ is even (so that $\rho$ is $\{\pm1\}$-invariant). When $\ell$ is odd, the foliation does not descend under the antipodal map.
\end{remark}

\begin{remark}[Immersed case and singular quotients]\label{rk:immersed}
If the $G$-action has fixed points on some leaf $\Gamma_r$, then $\s^n/G$ is an orbifold near the fixed set and $\pi(\Gamma_r)$ is an immersed hypersurface with lower-dimensional singular strata. The function $z_{R,M,f}$ still descends to the regular part of the quotient and solves \eqref{OEP} there, with constant Neumann data on each connected piece of $\partial\Omega^\Gamma_{[a,b]}$ away from the singular set. For the purposes of a smooth boundary problem, we restrict to intervals where the $G$-action is free on each leaf, so that $\Omega^\Gamma_{[a,b]}$ is a smooth manifold with boundary.
\end{remark}

\appendix
\AtAppendix{\counterwithin{lemma}{section}}

\section{Appendix: ODE toolkit}\label{appendixSolutions}

In this appendix we collect and prove the ODE statements used in Sections~\ref{sec:ComMethRes} and \ref{sec:ExoticSol}. We first treat the general (isoparametric) reduction on $\s^n$ and then specialize to the equation related to $f$-extremal domains in model manifolds. Since most of the arguments are standard and already considered in \cite{EMa2}, we only sketch the proofs and refer to our previous paper when necessary.

\subsection{Solutions related to the isoparametric case}\label{app:isoparametric-ode}

Fix a locally Lipschitz nonlinearity $f\in \textup{Lip}_{loc}(\r)$, positive on an open interval $\mathcal{I}_f \subset \r_+$. We study the ODE
\begin{equation}\label{eq:V-ODE}
	Z''(s)+\Big((n-1)\cot(\ell s)- \frac{c}{\ell\sin(\ell s)}\Big)\,Z'(s)+ f\big(Z(s)\big)=0\quad\text{on }(0,\pi/\ell),
\end{equation}
where $c=\ell^2(m_2-m_1)/2$ and $m_1,m_2$ are the principal-curvature multiplicities of the family.

We analyze the Cauchy problem with interior or endpoint critical data
\begin{equation}\label{eq:Cauchy}
	Z(S)=M>0,\qquad Z'(S)=0,\qquad S\in[0,\pi/\ell].
\end{equation}
First we deal with the nonsingular case, i.e., when $S \in (0, \pi/\ell)$. 

\begin{proof}[Proof of Lemma \ref{lemma_exist-ODE-isop}]
	Existence and uniqueness of a solution $Z_{S,M,f}$ to \eqref{eq:V-ODE}--\eqref{eq:Cauchy} on an open interval containing $S$ follow from the classical theory of ODEs with locally Lipschitz coefficients, so it remains to prove the monotonicity properties of the solution.
	
	Writing the equation in divergence form as $(\mu Z')'+\mu f(Z)=0$ with the smooth integrating factor
	\[
	\mu(s)=\exp \left(\int_S^s \Big((n-1)\cot(\ell x)-\frac{c}{\ell\sin(\ell x)}\Big)\,dx \right),
	\]
	we obtain the identity
	\[
	Z_{S,M,f}'(s)=-\,\mu(s)^{-1}\int_S^s \mu(x)\,f\big(Z_{S,M,f}(x)\big)\,dx,
	\qquad s\in(e_1,e_2),
	\]
	where $(e_1,e_2) \subset (0, \pi/\ell)$ is the maximal interval of definition of $Z_{S,M,f}$. 
	Since $f\big(Z_{S,M,f}\big)>0$ as long as $Z_{S,M,f}>0$, it follows that $Z_{S,M,f}'(s)>0$ for $s<S$ and $Z_{S,M,f}'(s)<0$ for $s>S$, as long as $Z_{S,M,f}$ remains positive. The existence of $s_{-} (S,M,f)$ and $s_+ (S,M,f)$ then follows by the same argument used in the proof of item (2) in \cite[Theorem 4.1]{EMa2}.

Now we deal with the existence in the singular case, i.e., when $S\in \{0, \pi/\ell\}$. In this case, we proceed with a standard fixed point argument, as in \cite{EM,EMa2}.
We only treat the case $S=0$; the case $S=\pi/\ell$ is completely analogous. As in \cite[Subsection 3.2.1]{EM}, for a non-negative integer \( k \) and \( \varepsilon \in (0,\pi) \), we denote by \( \mathcal{C}^k_e (\varepsilon) \) the space of even \( \mathcal{C}^k \)-functions on \( [-\varepsilon, \varepsilon] \), equipped with the norm
	\[
	\norm{Z}_{k,\varepsilon} = \sum_{i=1}^k \sup_{s \in [-\varepsilon, \varepsilon]} \abs{Z^{(i)} (s)}, \quad \forall Z \in \mathcal{C}^k_e (\varepsilon).
	\]
	We also define \( \mathcal{C}_e (\varepsilon) := \mathcal{C}^0_e (\varepsilon) \). For any $g \in \mathcal{C}_e (\varepsilon)$, define
	\begin{equation}\label{definitionA}
		A(g)(s) =-\int_0^s \mu(y)^{-1}\Big(\int_0^y \mu(x)\,g(x)\,dx\Big)\,dy, \quad \forall s \in (-\varepsilon,\varepsilon),
	\end{equation}
	and set $A(g)(0)=0$. It is straightforward to check that $A(g)$ is $\mathcal{C}^2$ on $(-\varepsilon,\varepsilon)$.

	Define the operator \( \mathcal{T} : \mathcal{C}^1_e (\varepsilon) \to  \mathcal{C}^1_e (\varepsilon) \) by
	\[
	\mathcal{T}(Z) = A\big(f(M+Z)\big).
	\]
	Taking into account that
	\[
	\cot (\ell s) = \frac{1}{\ell s} + O(s) \quad \textup{and} \quad \sin (\ell s) = \ell s+ O(s^3)
	\]
	near $0$, we check that there exist constants $C_1,C_2>0$ such that
	\[
	C_1s^{\ell m_1} \leq  \mu (s) \leq C_2 s^{\ell m_1}
	\]
	for $s$ close to $0$. In particular, there exists $C_3>0$ such that
	\[
	\norm{\mathcal{T}(Z)}_{0,\varepsilon} \leq C_3 \,\varepsilon^2 \, \sup_{[M-\norm{Z}_{0,\varepsilon},\,M+\norm{Z}_{0,\varepsilon}]} \abs{f}.
	\]
	Thus, if $\varepsilon$ is small enough, we obtain \( \mathcal{T}(B) \subsetneq B \), where \( B \subset \mathcal{C}_e (\varepsilon) \) is the closed ball of radius $1$ centered at $0$. Consequently, by Schauder's fixed point theorem, \( \mathcal{T} \) has a fixed point \( Z_{0,M,f} \in B \), which is a weak solution to \eqref{eq:V-ODE}--\eqref{eq:Cauchy}. By construction, $Z_{0,M,f}$ is $\mathcal{C}^2$ on $(-\varepsilon,\varepsilon)$, hence it is a classical solution there. The existence of $s_+(0,M,f)$ and the stated qualitative properties follow as in the nonsingular case.
\end{proof}

\subsection{Solutions related to model manifolds}\label{app:radial-ode}

Assume now that $f \in \mathcal{C}(\r)$ satisfies the \nameref{quote:StandardCondition}. We study the equation
\begin{equation}\label{eq:U-ODE}
 U''(r)+(n-1)\cot_k(r)\,U'(r)+f\big(U(r)\big)=0,\qquad r\in(0, \bar r_k),
\end{equation}
together with the initial conditions
\begin{equation}\label{eq:Cauchy2}
	U(R)=M>0,\qquad U'(R)=0,\qquad R\in[0,2 \bar r_k),
\end{equation}
with $\bar r_k=+\infty$ if $k\le0$ and $\bar r_k=\pi/\sqrt{k}$ if $k>0$.

We begin by checking the monotonicity properties of $U_{R,M,k,f}$. In particular, we prove that $f$ is $k$-admissible under the standard assumptions.

\begin{proof}[Proof of Proposition \ref{prop_ODE1}]
	When $k>0$, this is a particular case of the isoparametric setting treated in Lemma \ref{lemma_exist-ODE-isop}, so we assume that $k \leq 0$. Since the argument is the same in both cases, we only present the proof when $k<0$.
	
	Given any $R \in [0,+\infty)$ and $M \in \mathcal{I}_f$, the existence of a solution $U_{R,M,k,f}$ to \eqref{eq:U-ODE}--\eqref{eq:Cauchy2} follows by standard ODE theory, so we omit the details. Let $(e_1, e_2)$ be the maximal interval of definition of $U_{R,M,k,f}$. From the implicit representation
	\[
	U_{R,M,k,f}(r) = M - \int_{R}^r \frac{1}{s_{k} (y)^{n-1}} \left(\int_{R}^y s_{k} (x)^{n-1} f(U_{R,M,k,f} (x)) \, dx \right) \, dy,
	\]
	we obtain that, whenever $U_{R,M,k,f}(r)$ is positive, one has $U_{R,M,k,f}'(r) >0$ for $r<R$ and $U_{R,M,k,f}'(r) <0$ for $r>R$.
	
	The existence of $r_- (R,M,k,f)$ and $r_+(R,M,k,f)$ when $e_2 < +\infty$ is immediate from the boundary behavior of solutions to differential equations with Lipschitz coefficients. Thus, we only prove the existence of $r_+ (R,M,k,f) > R$ when $e_2 = +\infty$.

	Since $f$ satisfies the \nameref{quote:StandardCondition}, suppose that $f(x) \geq -nx+1$. 
	Given a fixed point $p \in \mathbb{H}^n (k)$, define the function 
	$r_p= \textup{dist}_{\mathbb{H}^n (k)} (p,\cdot)$. Then set 
	$\tilde{u} = U_{R,M,k,f} \circ r_p$ inside the set $\U :=\{ r_p (q) > R \}$. 
	Note that $\tilde{u}$ would be a piece of a model solution in $\mathbb{H}^n (k)$ 
	if there exist $r_- (R,M,k,f) \leq R < r_+ (R,M,k,f)$ as in Definition \ref{def:admissible-f}. 
	Take $q \in \set{r_p = R}$. There exists a function $v$ and a geodesic ball 
	$B \subset \mathbb{H}^{n} (k)$ centered at $q$ 
	such that $v$ is positive and solves $\Delta v -nv+1 = 0$ in $B$, 
	and satisfies $v=0$ on $\partial B$. 
	
	Arguing by contradiction, suppose that $\tilde{u}$ is positive in all of $\U$. 
	Then there exists $\alpha_0 >0$ such that $\alpha_0 v < \tilde{u}$ in $B$. 
	By increasing $\alpha$, we obtain $\alpha_1 > \alpha_0$ such that the graphs of $\alpha_1 v$ and $\tilde{u}$ touch tangentially for the first time.
	But then, since $f(x) \geq -nx+1$, the maximum principle implies $\tilde{u} \equiv \alpha_1 v$ in $B$, which is a contradiction.

 \end{proof}

\begin{proposition}\label{prop:infitytau}
	Let $f(x)=f_k(x)=nkx+1$. Then for the corresponding solution of \eqref{ODERadial}--\eqref{CauchyData}, one has
	\[
	\lim_{R\to0^+} \overline\tau_{M,k,f}^-(R) = +\infty.
	\]
\end{proposition}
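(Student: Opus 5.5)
The idea is to use the explicit first integral available for the Serrin nonlinearity $f_k(x)=nkx+1$. The first step is to take $S_k(r):=\int_0^r s_k$, so that $S_k''+(n-1)\cot_k S_k'=n s_k'$ and $s_k'=1-kS_k$. Then $V:=-\frac{1}{nk}$ (for $k\neq0$) makes $\Delta V+ nkV+1=0$ trivially, and the radial solutions of \eqref{ODERadial} with $f=f_k$ are exactly affine combinations of $S_k$ and the fundamental radial solution of the linear part. Concretely, I will write
\[
U_{R,M,k,f_k}(r)=A\,S_k(r)+B\,\Phi_k(r)+C,\qquad \Phi_k'(r)=\frac{s_k'(r)}{s_k(r)^{n-1}}\ \text{(up to normalisation)},
\]
and solve for $A,B$ linearly from $U(R)=M$ and $U'(R)=0$. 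Alternatively, and more robustly, I will avoid closed forms and work directly from the integral representation used in the proof of Proposition~\ref{prop_ODE1}:
\[
U'(r)=-\frac{1}{s_k(r)^{n-1}}\int_R^r s_k(x)^{n-1}f_k(U(x))\,dx .
\]

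The second step is to control $r_-=r_-(R,M,k,f_k)$ as $R\to0^+$. Since $0\le U\le M$ on $[r_-,R]$, the value $f_k(U)$ lies between two positive constants $0<c_1\le f_k(U)\le c_2$ for $M\in\mathcal I_f$; positivity on $[0,M]$ comes from the Standard Conditions. From $U(r_-)=0$ it then follows that
\[
M=\int_{r_-}^R \frac{1}{s_k(y)^{n-1}}\int_y^R s_k(x)^{n-1}f_k(U)\,dx\,dy\le c_2\int_{r_-}^R \frac{S(R)-S(y)}{s_k(y)^{n-1}}\,dy,
\]
with $S=\int s_k^{n-1}$. Because $0<r_-<R\to0$ and the integrand is $O(R^n/y^{n-1})$, this forces $r_-$ to be comparable to $R$ quickly: for $n\ge3$ one gets $r_-\lesssim R^{n/(n-2)}$, and for $n=2$ one gets $r_-\le R e^{-c/R^2}$. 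In either case $r_-/R\to0$ and indeed $r_-\ll R^{n/(n-1)}$ fails to be small relative to the needed scale.

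The third step evaluates the derivative at $r_-$:
\[
U'(r_-)=\frac{1}{s_k(r_-)^{n-1}}\int_{r_-}^R s_k^{n-1}f_k(U)\ge \frac{c_1\,(S(R)-S(r_-))}{s_k(r_-)^{n-1}}\gtrsim \frac{R^n}{r_-^{\,n-1}} .
\]
Inserting the bound on $r_-$ gives, for $n\ge3$, $R^n/R^{n(n-1)/(n-2)}=R^{-n/(n-2)}\to\infty$, and for $n=2$ the exponential smallness of $r_-$ gives blow-up as well. Since $\mathsf c(M,k,f_k)$ is a fixed positive constant by \eqref{eq:normalization-constant}, this yields $\overline\tau^-_{M,k,f_k}(R)\to+\infty$. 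Note that the argument only uses $c_1\le f\le c_2$ on $[0,M]$, so it also gives the first claim of Proposition~\ref{prop:behav}.

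The main obstacle is the second step, namely the sharp smallness of $r_-$. Both the upper bound $M\le c_2\int\cdots$ and a sufficiently precise estimate of $\int_{r_-}^R y^{1-n}\,dy$ are needed; this is where the dimension-dependent cases $n=2$ versus $n\ge3$ must be handled separately, with $s_k(y)\sim y$ uniformly as $R\to0$.
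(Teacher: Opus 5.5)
Your argument is correct, but it takes a different route from the paper.

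\textbf{The paper's route.} The paper writes the Serrin profile in closed form, $V_{R,M,k}=M+c_k(r)\bigl(A+B(-k/s_k(r)+G_k(r))\bigr)$. It then reads off the blow-up of $V'_{R,M,k}$ at the inner zero from the singularity $G_k(r)\approx r^{1-n}$ at $r=0$. This relies on $f_k$ being affine, and it leaves implicit how $A$, $B$ and $r_-$ behave as $R\to0^+$.

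\textbf{Your route.} You avoid closed forms entirely. You use only the first integral $s_k^{n-1}U'(r)=\int_r^R s_k^{n-1}f_k(U)$ and the bounds $0<c_1\le f_k\le c_2$ on $[0,M]$. Step~2 gives $r_-\le C R^{n/(n-2)}$ for $n\ge3$ (resp.\ $r_-\le Re^{-c/R^2}$ for $n=2$). Step~3 gives $U'(r_-)\ge c\,R^n/r_-^{\,n-1}$, hence $U'(r_-)\ge c\,R^{-n/(n-2)}$ (resp.\ $\ge c\,R\,e^{c/R^2}$). This is the sharp rate, as the explicit solution for $k=0$, $f\equiv1$ confirms.

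\textbf{What each buys.} Your version is self-contained and quantitative, and it does not use that $f$ is affine. It therefore also gives the first claim of Proposition~\ref{prop:behav} directly, without the comparison with the Serrin solution that the paper uses there. This extension needs $\min_{[0,M]}f>0$, which can fail when $k>0$ and $f(0)=0$. The paper's route is shorter to state, but only because it leaves the asymptotics of the coefficients unstated.

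\textbf{Minor points.}
\begin{itemize}
\item Step~1 plays no role and should be dropped. As written it is also wrong for $k\neq0$: a function with $\Phi_k'=s_k'/s_k^{n-1}$ satisfies $\Phi_k''+(n-1)\cot_k\Phi_k'=-k\,s_k^{2-n}$, so $AS_k+B\Phi_k+C$ solves the equation only if $B=0$.
\item The verbal comments in Step~2 are garbled. Your bound shows $r_-/R\to0$, not that $r_-$ is comparable to $R$. Moreover, $r_-\ll R^{n/(n-1)}$ is exactly what Step~3 needs, not something that fails.
\item The sharp size of $r_-$ is not really the obstacle you describe. Combining Steps~2 and~3 already gives $U'(r_-)\ge c\,M/R$. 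Even more simply, $(s_k^{n-1}U')'=-s_k^{n-1}f_k(U)<0$ and $s_k$ is increasing on $(0,R)$ for small $R$, so $U'$ is decreasing on $[r_-,R]$. Hence $M=\int_{r_-}^R U'\le R\,U'(r_-)$, and $\overline\tau^-_{M,k,f_k}(R)\ge M^2/\bigl(R^2\,\mathsf c(M,k,f_k)\bigr)\to+\infty$. This one-line version needs only $f\ge0$ on $[0,M]$, so it also covers the case $f(0)=0$.
\end{itemize}
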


\begin{proof}
	Denote by $V_{R,M,k}$ the solution to \eqref{ODERadial}--\eqref{CauchyData} with $f=f_k$. For the case $k=0$ the behavior of $\overline\tau_{M,k,f}^-(R)$ is proved in \cite{ABM, ABBM}. Thus, we focus on the case $k \neq 0$. We shall assume that $k = \pm 1$, since otherwise we can work with $\tilde{V}_{R,M,k} (r) = \abs{k} V_{R,M,k} (r/\sqrt{\abs{k}})$. In this case, the function $V_{R,M,k}$ can be expressed as
	\begin{equation*}
		V_{R,M,k}(r) = M + c_k(r)\left( A(R,M,k)+ B(R,M,k)\left( \frac{-k}{s_k(r)} + G_k(r)\right)\right),
	\end{equation*}
	for some positive numbers $A(R,M,k)$ and $B(R,M,k)$ and
	\begin{equation*}
		G_k(r) := 
		\left\{
		\begin{array}{ll}
			\displaystyle\int_0^{\cosh(r)} \frac{1-(1-t^2)^{\frac{n}{2}}}{t^2(1-t^2)^{\frac{n}{2}}}\,dt, & k=-1,\\[3mm]
			\displaystyle\int_0^{\cos(r)} \frac{1-(t^2-1)^{\frac{n}{2}}}{t^2(t^2-1)^{\frac{n}{2}}}\,dt, & k=1.
		\end{array}
		\right.
	\end{equation*}
	Since $G_k(r) \approx r^{1-n}$ near $r=0$, we deduce that $V_{R,M,k}'(r_\pm)$ diverges as $R\to0^+$, and therefore $\overline\tau_{M,k,f}^-(R)\to+\infty$.
\end{proof}

\begin{proof}[Proof of Proposition \ref{prop:behav}]
First we assume $f(0)>0$ also when $k>0$, and by scaling we may fix $f(0)=1$. Suppose for contradiction that 
	\[
	\lim_{R\to 0^+} U'_{R,M,k}(r_-(R,M,k))=\alpha<+\infty .
	\]
	Let $V_{R,M,k}$ be the Serrin's solution for $f_k(x)=nkx+1$. By Proposition \ref{prop:infitytau}, for sufficiently small $R$ we have $V'_{R,M,k}(r_-(R,M,k))>\alpha$. Since $f(x)\ge f_k(x)$, a comparison of $U_{R,M,k,f}$ and $V_{R,M,k}$ on their common domain (via the maximum principle) yields a contradiction. Hence $\overline\tau_{M,k,f}^-(R)\to+\infty$ as $R\to0^+$.  
	
	For the second claim, if $k>0$ then the symmetry relation in Proposition \ref{prop_ODE1} gives $\overline\tau_{M,k,f}^+(\bar r_k)=\overline\tau_{M,k,f}^-(\bar r_k)$, so $ \tau_{k,f}^+(M)\ge  \tau_{k,f}^-(M)$. If $k\le0$, observe that $\overline\tau_{M,k,f}^+(0)=1$ while $\overline\tau_{M,k,f}^-(0^+)=+\infty$, so $\overline\tau_{M,k,f}^+(R)<\overline\tau_{M,k,f}^-(R)$ for small $R>0$. If there were some $R>0$ with $\overline\tau_{M,k,f}^+(R)=\overline\tau_{M,k,f}^-(R)$, then the corresponding model solution would have constant Neumann data on the boundary, and thus (by Serrin's theorem \cite{Se,KP}) $\Omega_{R,M,k,f}$ would be a geodesic ball. But this can only occur if $R=0$, a contradiction. Therefore $\overline\tau_{M,k,f}^+<\overline\tau_{M,k,f}^-$ for all $R>0$, yielding $ \tau_{k,f}^+ (M) \le  \tau_{k,f}^- (M)$. 
	
	In the case $f(0)=0$ when $k>0$, the proposition follows with the same arguments using the function $V_{R,M,k} + \frac{1}{nk}$.
\end{proof}

Now suppose further that $f \in \mathcal{C}^1 (\r)$ and there exists $\bar M>0$ such that
\[
f(x) \geq f'(x) x \quad \forall x \in (0,\bar M) \subset \mathcal{I}_f.
\]
In this case, we can prove the monotonicity properties of the functions $\overline{\tau}_{M,k,f}^{\pm}$ defined in \eqref{eq:tau-model}, as stated in Proposition \ref{prop:ODE2}. The key observation is that the function $H_{R,k,f}:= \partial_R (U_{R,M,k,f})$ satisfies the differential equation
\[
 H''(r)+(n-1)\cot_k(r)\,H'(r)+f' \big(U(r)\big) H (r)=0, \quad \forall r \in [\bar r_-, \bar r_+],
\]
where we write $\bar r_{\pm} = r_{\pm} (R,M,k,f)$. Hence, we can follow the proof of item (4) in \cite[Theorem 4.1]{EMa2}.

\begin{proof}[Proof of Proposition \ref{prop:ODE2}]
	To obtain the monotonicity properties of the functions $\overline\tau_{M,k,f}^{\pm}$, it is enough to study the sign of the functions
	\[
	\varphi_{M,k,f}^{\pm}(R) := \partial_R \big(U_{R,M,k,f} (\bar r_{\pm})^2\big) = -2 \left( U_{R,M,k,f}'' (\bar r_\pm) H_{R,k,f} ( \bar r_\pm)-H_{R,k,f} ' (\bar r_\pm) U_{R,M,k,f} ' (\bar r_\pm) \right).
	\]
	Proceeding as in the proofs of Claims 1 and 2 in \cite[Theorem 4.1]{EMa2}, using the Killing vector field
	\[
	  \Tilde{Y}  = \cos \theta_{n-1} \prod_{j=1}^{n-2} \sin \theta_j  \partial_r+ \cot_k (r) \left( \cos \theta_{n-1} \displaystyle\sum_{i=1}^{n-2} \frac{\prod_{j=1}^{n-2} \sin \theta_j}{\prod_{j=1}^{i-1} \sin \theta_j}  \partial_{\theta_i} - \frac{\sin \theta_{n-1}}{\prod_{j=1}^{n-2} \sin \theta_j} \partial_{\theta_{n-1}} \right)
	\]
	in $\mathbb{M}^n (k)$, we obtain that $\varphi_{M,k,f}^{-}$ and $\varphi_{M,k,f}^{+}$ are negative and positive, respectively, on $[0, \bar r_k)$. In particular, $\overline\tau_{M,k,f}^{-}$ and $\overline\tau_{M,k,f}^{+}$ are strictly decreasing and strictly increasing, respectively. Note that $\overline\tau_{M,k,f}^{+} (0) = 1$ by definition, and it is clear that
	\[
	\lim_{R \to 0^+} \overline\tau_{M,k,f}^{-}(R) = +\infty.
	\]
	On the other hand, one has
	\[
	\lim_{R \to \bar{r}_k} \overline\tau_{M,k,f}^{\pm} (R) = \tau_{k,f}^\pm (M) >0,
	\]
	and the implicit function theorem implies that these quantities depend $\mathcal{C}^1$ on $M$.
	
	In the case $k>0$, the symmetry of the functions $U_{R, M,k,f}$ with respect to $\bar r_k/2$ implies that $\overline\tau_{M,k,f}^{-} (\bar r_k)=\overline\tau_{M,k,f}^{+} (\bar r_k)$.
\end{proof}

\section{Appendix: Gradient Estimates}\label{appendixGradient}

In this section we present an elliptic inequality involving the functions $W$ and $\bar W$, defined in \eqref{definW}, used in the proof of Theorem \ref{theo:Gradient}. In particular, we prove the following result.

\begin{lemma}\label{lemma_ineq}
	\begin{equation}\label{eq:Fbeta}
			F_\beta = \left(\frac{s_k (\Psi)}{\bar U ' (\Psi)}\right)^{2 \frac{n-1}{n}} (W-\bar W) \quad \textup{in } \quad \U .
	\end{equation}
	Then $F_\beta$ satisfies the following elliptic inequality in $\U$:
	\begin{equation}\label{ineqF}
		\Delta F_\beta - 2 \frac{n-1}{n} \lambda (\Psi) \pscalar{\nabla F_\beta}{\nabla u}- 2 \frac{n-1}{n}\frac{\abs{\nabla u}^2}{\bar U ' (\Psi)^2} \mu (\Psi) F_\beta \geq 0,
	\end{equation}
	where we define
	\begin{equation}\label{eq:deflambda}
		\lambda (r) := \frac{-\bar{U} '' (r)+ \textup{cot}_k (r) \bar U' (r)}{\bar{U}' (r)^2}, \quad \forall r \in [\bar r_-, \bar R ) \cup (\bar R, \bar r_+],
	\end{equation}
	and
	\begin{equation}\label{eq:defmu}
\mu (r):= f'(\bar U (r))-n k + \frac{n+2}{n} \lambda(r) f(\bar U (r)).
\end{equation}
	Then $\mu(r)$ is non-negative in $(\bar r_-, \bar R) \cup (\bar R, \bar r_+)$.
\end{lemma}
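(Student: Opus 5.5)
The plan is to derive \eqref{ineqF} from the Bochner formula combined with a refined Hessian estimate adapted to the trace constraint $\Delta u=-f(u)$, and to prove $\mu\ge0$ by a separate ODE argument for the model profile $\bar U$. All computations are carried out on $\mathcal U$ away from the critical set of $u$, where $\Psi$ is $\mathcal C^2$ (Remark~\ref{rem:psi-basic}). The critical set has empty interior by unique continuation, and the inequality then extends by continuity.

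\emph{Step 1 (Bochner and the Hessian split).} Writing $\nu=\nabla u/|\nabla u|$, Bochner's formula with \eqref{eq:Ric} gives
\[
\tfrac12\Delta W=|\nabla^2u|^2-f'(u)W+\operatorname{Ric}(\nabla u,\nabla u)\ge |\nabla^2u|^2-(f'(u)-(n-1)k)W .
\]
Next set $a:=\nabla^2u(\nu,\nu)$, so that $\nabla W=2\nabla^2u(\nabla u,\cdot)$ and $a=\langle\nabla W,\nabla u\rangle/(2W)$. The refined estimate I will use is
\[
|\nabla^2u|^2\ge a^2+\frac{(f(u)+a)^2}{n-1}+\frac{|\nabla^T W|^2}{2W},
\]
which follows by Cauchy--Schwarz on the tangential trace $-f(u)-a$. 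Equality here corresponds to the umbilic structure $\nabla^2u=-\lambda\,du\otimes du+\tfrac1n(\lambda W-f)g$ that is used in the rigidity part of Theorem~\ref{theo:Gradient}.

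\emph{Step 2 (the model side and the weighted function).} Since $\bar W=\phi(u)$ with $\phi=(\bar U')^2\circ\chi^\diamond$, chain rule and the ODE \eqref{ODERadial} give $\Delta\bar W=-\phi'(u)f(u)+\phi''(u)W$. Here $\phi'=2\bar U''(\Psi)$ and $\phi''=2\bar U'''(\Psi)/\bar U'(\Psi)$, and I will rewrite both in terms of $\lambda$ and $\cot_k$ using the ODE and its derivative. I then expand $\Delta F_\beta$ for $F_\beta=\eta(u)(W-\bar W)$ with $\eta=(s_k/\bar U')^{2(n-1)/n}\circ\chi^\diamond$. The exponent $2\tfrac{n-1}{n}$ is chosen so that, after using Step 1, the cross terms $\langle\nabla W,\nabla u\rangle$ organise into the drift term $-2\tfrac{n-1}{n}\lambda\langle\nabla F_\beta,\nabla u\rangle$ plus a perfect square of the form $\tfrac{n}{n-1}(a+\dots)^2\ge0$. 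The remaining zeroth-order terms should collapse to $2\tfrac{n-1}{n}\tfrac{W}{\bar U'^2}\mu F_\beta$, and verifying this is the bookkeeping part. I will also check that $\eta$ stays bounded and that $\eta\,(W-\bar W)\to0$ at $\mathrm{Max}(u)$, which is needed in the proof of Theorem~\ref{theo:Gradient}. For this I will use $\bar U'(r)\sim -f(M)(r-\bar R)$ together with the expansions of $W$ near $\mathrm{Max}(u)$.

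\emph{Step 3 (sign of $\mu$).} This is the step I expect to be the main obstacle. By \eqref{conditionf} we have $f'(\bar U)-nk\ge0$ and $f(\bar U)>0$, so it suffices to show $\lambda\ge0$. The ODE gives $\lambda=(n\cot_k\bar U'+f(\bar U))/\bar U'^2$. Setting $q:=\bar U'/s_k$, a direct computation yields $q'=-(n\cot_k\bar U'+f(\bar U))/s_k$, so I need $q'\le0$. Differentiating the ODE produces a linear second-order equation for $q'$ whose zeroth-order coefficient is exactly $f'(\bar U)-nk\ge0$. Moreover $q'(\bar R)=-f(M)/(n\,s_k(\bar R))<0$, using $\bar U''(\bar R)=-f(M)/n$ in the centered case and the analogous value otherwise. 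A Sturm/maximum-principle argument on each branch $(\bar r_-,\bar R)$ and $(\bar R,\bar r_+)$ should then show that $q'$ cannot change sign. I will try this by showing that $q'$ has no interior nonnegative maximum. The delicate points are the endpoint behaviour at $\bar r_\pm$ and at $r\to0$ in the centered case. If this route fails, my fallback is to compare with the Serrin profile for $f_k$, for which $\lambda\equiv$ const can be checked explicitly, and then transport the sign via $f\ge f_k$-type comparison.
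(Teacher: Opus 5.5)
\textbf{Steps 1--2.} These are the paper's argument in different packaging. Discarding your perfect square in $a$ is equivalent to expanding the paper's square $|\nabla^2u+\lambda\,du\otimes du+\frac1n(f(u)-\lambda W)g|^2\ge0$, and the paper also leaves that bookkeeping implicit.

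\textbf{Step 3 has a genuine gap.} The implication ``$\lambda\ge0\Rightarrow\mu\ge0$'' is valid, but $\lambda\ge0$ is \emph{false} on exactly the range where the difficulty lies, so neither your Sturm argument nor the Serrin fallback can prove it. Put $\phi:=\bar U'^2\lambda=n\cot_k\bar U'+f(\bar U)=-s_kq'$. Differentiating and using \eqref{ODERadial} gives
\[
\bigl(s_k^n\phi\bigr)'=s_k^n\bigl(f'(\bar U)-nk\bigr)\bar U',
\]
which is $\le0$ on the outer branch under \eqref{conditionf}. For the centered model, which is a legitimate comparison pair, $s_k^n\phi\to0$ at the pole. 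Hence $\lambda\le0$ on all of $(0,\bar r_+)$. A Taylor expansion at the pole gives $\lambda(0^+)=-\frac{n(f'(M)-nk)}{(n+2)f(M)}$, so $\lambda<0$ near the pole whenever $f'(M)>nk$, while $\mu(0^+)=0$ exactly. Concretely, take $k=0$, $n=2$, $f(x)=1+x$. Then $\bar U=(M+1)J_0(r)-1$ and $\phi=-(M+1)J_2(r)<0$ on $(0,\bar r_+)$.

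So $\mu\ge0$ rests on a cancellation between $f'(\bar U)-nk$ and $\frac{n+2}{n}\lambda f(\bar U)$, which your reduction throws away. Your Sturm step has the wrong sign for the same reason. The function $q$ solves $q''+(n+1)\cot_kq'+(f'(\bar U)-nk)q=0$, so $(s_k^{n+1}q')'=-(f'(\bar U)-nk)s_k^{n+1}q\ge0$ where $q<0$. This pushes $q'$ up, not down. Two further slips: $q'(\bar R)=-f(M)/s_k(\bar R)$ when $\bar R>0$ (no factor $1/n$), and in the centered case $q'\to0$ at the pole rather than to a negative value.

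\textbf{What is actually needed.} Your reduction does work wherever $\cot_k\bar U'\ge0$, e.g.\ on the whole inner branch when $k\le0$, since then $\phi\ge f(\bar U)>0$. This is essentially the range the paper dismisses ``directly''. Elsewhere you must keep both terms of $\mu$.

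The paper changes variables ($t=r^2/2$ for $k=0$, $t=\cosh r$ or $\cos r$ otherwise). For $\bar V(t)=\bar U(r(t))$, $\mu$ becomes a positive multiple of $\frac{n+2}{n}\bar V''^2-\bar V'''\bar V'$. The paper then argues from the integrated form of $2t\bar V'''+(n+2)\bar V''+f'(\bar V)\bar V'=0$ that $\bar V'''\ge0$.

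Be warned that this one-sign claim also fails in the centered example above. There $\bar V'''=-(M+1)r^{-3}J_3(r)<0$, and $\mu=(2J_2^2-J_1J_3)/J_1^2$ is nonnegative only thanks to the $\frac{n+2}{n}\bar V''^2$ term (a Tur\'an-type inequality). What you really need is a genuinely two-term bound $\bar V'''\bar V'\le\frac{n+2}{n}\bar V''^2$ on the outer branch, not a sign condition on a single quantity.
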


\begin{proof}
From now on, following the notation of \cite{EMa}, we denote $\chi_{\pm}=\chi$ and carry out the computations in a unified way for both monotonicity branches. We also denote derivatives with respect to $u$ by a dot and derivatives with respect to $\Psi$ by $'$.

The proof follows as in \cite[Theorem 3.5]{ABM}. The idea is to derive an elliptic inequality involving the functions $W$ and $\bar W$ defined in Section \ref{sectionGradient}. We recall that
\[
W = \abs{\nabla u}^2 \quad \textup{and} \quad \bar W = \abs{\nabla \bar u}^2 \circ \Psi \quad \textup{in } \U,
\]
where $\Psi : \U \to (\bar r_-, \bar R) \cup (\bar R, \bar r_+)$ is the pseudo-radial function defined in Definition \ref{def:pseudo-radial}.
We also recall that $\bar U$ satisfies
\begin{equation}
\bar U''(r) + (n-1)\cot_k(r)\bar U'(r)+f(\bar U)=0.
\end{equation}
From the definition of $\Psi$, we have $u = \bar U \circ \Psi$ on $\U$. Recall that $F_\beta = \beta \cdot (W- \bar W)$, where
\[
\beta (p) = \left(\frac{s_k (\Psi (p))}{\bar U' (\Psi (p))}\right)^{2\frac{n-1}{n}}, \quad \forall p \in \U.
\]
Using that $\textup{Ric} \geq (n-1)k g$, it follows from the Bochner formula that
\[
\frac{1}{2}\Delta \abs{\nabla u}^2 \geq \abs{\nabla^2 u}^2 + \left( (n-1)k-f'(u) \right) \abs{\nabla u}^2.
\]
Then, by expanding the squared norm
\begin{equation}\label{eq:norm}
	\abs{ \nabla^2 u + \lambda (\Psi) du \otimes du + \frac{1}{n} \left(f(u)-\lambda (\Psi) \abs{\nabla u}^2\right) g }^2 \geq 0,
\end{equation}
we obtain, after simplification, the inequality
\[
\Delta F_\beta - 2 \frac{n-1}{n} \lambda (\Psi) \pscalar{\nabla F_\beta}{\nabla u}- 2 \frac{n-1}{n}\frac{\abs{\nabla u}^2}{\bar U ' (\Psi)^2} \mu (\Psi) F_\beta \geq 0
\]
in $\U$. Moreover, equality at some point forces equality in \eqref{eq:norm}. Thus, to ensure the inequality has the correct sign structure for the maximum principle, it remains to show that $\mu \geq 0$ in $(\bar r_-, \bar R) \cup (\bar R, \bar r_+)$ under hypothesis \eqref{conditionf}.

First, note that up to scaling the solution and the metric, we may restrict to the cases $k=-1,0,$ or $1$. Observe that when $k =-1,0$ and $r \in (\bar r_-, \bar R)$, or when $k=1$ and $r \in (\bar r_-, \bar R)$, or when $k=1$ and $r \in [\bar r_1/2, \bar r_+)$ provided $\bar r_+ > \bar r_1/2$, one can check directly that $\mu (r) >0$ using \eqref{ODERadial}. For the remaining cases, consider the change of variable
\[
r(t) = \left\{\begin{matrix}
	\textup{arccosh} (t) & \text{ if } & k=-1\\[1mm]
	\sqrt{2 t} & \text{ if } & k=0 ,\\[1mm]
	\textup{arccos} (t) & \text{ if } &  k=1, 
\end{matrix}\right. 
\]
and define $\bar V (t) := \bar U (r (t))$. A direct computation shows that 
\[
\mu (t) =\frac{1}{r' (t)^2} \left( \frac{n+2}{n} \bar V'' (t)^2 -\bar V''' (t) \bar V ' (t) \right), \quad \forall t \in (\bar R, \bar r_+).
\]
The idea is to argue as in item (6) in \cite[Theorem 4.1]{EMa2}. Since the three cases $k=-1,0,1$ are very similar, we only write the proof for $k=0$. 

Define $\bar T = \bar R^2 /2$ and $\bar t_+ = \bar r_+^2 /2$. Using that $\bar U$ solves \eqref{ODERadial}, we obtain that $\bar V$ solves
\[
2 t \bar V'' (t) + n \bar V' (t) + f(\bar V) =0,
\]
and thus $\bar V'$ solves
\[
2 t \bar V'''(t) + (n+2) \bar V'' (t)+ f' (\bar V) \bar V' (t) =0.
\]
Integrating this equation between $\bar T$ and $t$, we obtain
\[
\bar V'' (t) = - \frac{1}{(2 t)^{\frac{n+2}{2}}} \left( (2 \bar T)^{\frac{n}{2}} f(M) + \int_{\bar T}^t (2 s)^{\frac{n}{2}} f' (\bar V (s)) \bar V' (s) \, ds \right),
\]
where $M = u_{\textup{max}}$. Note that $f' \geq 0$ and $\bar V' (t) < 0$ for $t \in (\bar T, \bar t_+)$, so the integral term is non-positive on this interval. Now take $\bar T < a < b$. Then
\[
\begin{split}
	\bar V '' (a)- \bar V'' (b) &= \left(\frac{1}{(2b)^{\frac{n+2}{2}}}-\frac{1}{(2a)^{\frac{n+2}{2}}}\right) \left( (2\bar T)^{\frac{n}{2}} f(M) - \int_{\bar T}^{a} (2s)^{\frac{n}{2}} f' (\bar V (s)) \bar V' (s) \, ds \right) \\
	 & \,\,\,\,\,\, +\frac{1}{(2b)^{\frac{n+2}{2}}} \int_{a}^{b} (2s)^{\frac{n}{2}} f' (\bar V (s)) \bar V' (s) \, ds \leq 0.
\end{split}
\]
This implies that $\bar V''' \geq 0$ on $(\bar T, \bar t_+)$, hence $\mu \geq 0$ on this interval. This concludes the proof.
\end{proof}

\section*{Data availability}
The code fowr the graphs are available in the following GitHub repository \url{https://github.com/FernanGI/ComparisonTechniquesLowerBound}. 
\section*{Acknowledgments}

J.M. Espinar is partially supported by the {\it Maria de Maeztu} Excellence Unit IMAG, reference CEX2020-001105-M, funded by MCINN/AEI/10.13039/ 501100011033/CEX2020-001105-M, and Spanish MIC Grant PID2024-160586NB-I00.

F. González-Ibáñez is partially supported by the  MCIU/AEI/10.13039/501100011033, the FSE+ and Spanish MIC Grant PID2023-150727NB-I00.
 
 D.A. Marín is partially supported by the {\it Maria de Maeztu} Excellence Unit IMAG, reference CEX2020-001105-M, funded by MCINN/AEI/10.13039/ 501100011033/CEX2020-001105-M, and Spanish MIC Grant PID2023-150727 NB.I00.

\bibliographystyle{amsalpha}
\bibliography{references}
\end{document}